\newtheorem{theorem}{Theorem}[section]
\newtheorem{proposition}[theorem]{Proposition}
\newtheorem{lemma}[theorem]{Lemma}
\newtheorem{corollary}[theorem]{Corollary}
\theoremstyle{definition}
\newtheorem{example}[theorem]{Example}
\newtheorem{remark}[theorem]{Remark}
\newtheorem{definition}[theorem]{Definition}
\font\black=cmbx10 \font\sblack=cmbx7 \font\ssblack=cmbx5 \font\blackital=cmmib10  \skewchar\blackital='177
\font\sblackital=cmmib7 \skewchar\sblackital='177 \font\ssblackital=cmmib5 \skewchar\ssblackital='177
\font\sanss=cmss11 \font\ssanss=cmss8 scaled 900 \font\sssanss=cmss8 scaled 600 \font\blackboard=msbm10
\font\sblackboard=msbm7 \font\ssblackboard=msbm5 \font\caligr=eusm10 \font\scaligr=eusm7 \font\sscaligr=eusm5
\font\bsymb=cmsy10 scaled\magstep2
\def\all#1{\setbox0=\hbox{\lower1.5pt\hbox{\bsymb
       \char"38}}\setbox1=\hbox{$_{#1}$} \box0\lower2pt\box1\;}
\def\exi#1{\setbox0=\hbox{\lower1.5pt\hbox{\bsymb \char"39}}
       \setbox1=\hbox{$_{#1}$} \box0\lower2pt\box1\;}
\def\tx#1{{\fam0\relax#1}}
\def\sss#1{{\fam\ssfam\relax#1}}
\def\pmb#1{\setbox0\hbox{${#1}$} \copy0 \kern-\wd0 \kern.2pt \box0}
\def\pmbb#1{\setbox0\hbox{${#1}$} \copy0 \kern-\wd0
      \kern.2pt \copy0 \kern-\wd0 \kern.2pt \box0}
\def\pmbbb#1{\setbox0\hbox{${#1}$} \copy0 \kern-\wd0
      \kern.2pt \copy0 \kern-\wd0 \kern.2pt
    \copy0 \kern-\wd0 \kern.2pt \box0}
\def\pmxb#1{\setbox0\hbox{${#1}$} \copy0 \kern-\wd0
      \kern.2pt \copy0 \kern-\wd0 \kern.2pt
      \copy0 \kern-\wd0 \kern.2pt \copy0 \kern-\wd0 \kern.2pt \box0}
\def\pmxbb#1{\setbox0\hbox{${#1}$} \copy0 \kern-\wd0 \kern.2pt
      \copy0 \kern-\wd0 \kern.2pt
      \copy0 \kern-\wd0 \kern.2pt \copy0 \kern-\wd0 \kern.2pt
      \copy0 \kern-\wd0 \kern.2pt \box0}
\mathchardef\za="710B  
\mathchardef\zb="710C  
\mathchardef\zg="710D  
\mathchardef\zd="710E  
\mathchardef\zve="710F 
\mathchardef\zz="7110  
\mathchardef\zh="7111  
\mathchardef\zvy="7112 
\mathchardef\zi="7113  
\mathchardef\zk="7114  
\mathchardef\zl="7115  
\mathchardef\zm="7116  
\mathchardef\zn="7117  
\mathchardef\zx="7118  
\mathchardef\zp="7119  
\mathchardef\zr="711A  
\mathchardef\zs="711B  
\mathchardef\zt="711C  
\mathchardef\zu="711D  
\mathchardef\zvf="711E 
\mathchardef\zq="711F  
\mathchardef\zc="7120  
\mathchardef\zw="7121  
\mathchardef\ze="7122  
\mathchardef\zy="7123  
\mathchardef\zf="7124  
\mathchardef\zvr="7125 
\mathchardef\zvs="7126 
\mathchardef\zf="7127  
\mathchardef\zG="7000  
\mathchardef\zD="7001  
\mathchardef\zY="7002  
\mathchardef\zL="7003  
\mathchardef\zX="7004  
\mathchardef\zP="7005  
\mathchardef\zS="7006  
\mathchardef\zU="7007  
\mathchardef\zF="7008  
\mathchardef\zW="700A  
\newcommand{\be}{\begin{equation}}
\newcommand{\ee}{\end{equation}}
\newcommand{\bea}{\begin{eqnarray}}
\newcommand{\eea}{\end{eqnarray}}
\newcommand{\beas}{\begin{eqnarray*}}
\newcommand{\eeas}{\end{eqnarray*}}
\def\*{{\textstyle *}}
\newcommand{\R}{{\mathbb R}}
\newcommand{\T}{{\mathbb T}}
\newcommand{\we}{\wedge}
\newcommand{\nn}{\nonumber}
\newcommand{\ot}{\otimes}
\newcommand{\pa}{\partial}
\newcommand{\ti}{\times}
\newcommand{\cG}{{\mathcal G}}
\newcommand{\Li}{{\cal L}}
\def\lan{\langle}
\def\ran{\rangle}
\def\op{\oplus}
\def\cX{\mathcal{X}}
\def\cY{\mathcal{Y}}
\def\wt{\widetilde}
\def\Sec{\operatorname{Sec}}
\def\la{\langle}
\def\ran{\rangle}
\def\sT{{\sss T}}
\def\xd{\tx{d}}
\def\xi{\tx{i}}
\def\dt{\xd_{\sT}}
\def\dtr{\dt^r}
\def\cF{{\mathcal F}}
\def\dim{\operatorname{dim}}
\def\gl{\operatorname{gl}}
\def\GL{\operatorname{GL}}
\newcommand{\tr}{\mbox{$\mathrm{tr}$}}
\newcommand{\id}{\mathrm{id}}
\newcommand{\inv}{\mathrm{inv}}
\newcommand{\Tor}{\textnormal{Tor}}
\newcommand{\Vt}{\textnormal{Vert}}
\newcommand{\aGamma}{\overset{\alpha}{\Gamma}}
\def\N{\mathbb{N}}
\def\Lie{\mathrm{Lie}}
\def\n{\nabla}
\def\g{\frak{g}}
\def\T{\mathscr{T}}
\begin{document}
\title{Lifting statistical structures}
\author{Katarzyna Grabowska\footnote{email:konieczn@fuw.edu.pl }\\
\textit{Faculty of Physics,
                University of Warsaw}
\\ \\
Janusz Grabowski\footnote{email: jagrab@impan.pl} \\
\textit{Institute of Mathematics, Polish Academy of Sciences}
\\ \\
Marek Ku\'s\footnote{email: marek.kus@cft.edu.pl}\\
\textit{Center for Theoretical Physics, Polish Academy of Sciences}
 \\ \\
Giuseppe Marmo\footnote{email: marmo@na.infn.it}\\
\textit{Dipartimento di Fisica ``Ettore Pancini'', Universit\`{a} ``Federico II'' di Napoli} \\
\textit{and Istituto Nazionale di Fisica Nucleare, Sezione di Napoli} }
\date{}
\maketitle
\begin{abstract}
We consider some natural (functorial) lifts of  geometric objects associated with statistical manifolds (metric tensor, dual connections, skewness tensor, etc.) to higher tangent bundles.
It turns out that the lifted objects form again a statistical manifold structure, this time on the higher tangent bundles, with the only difference that the metric tensor is pseudo-Riemannian.
What is more, natural lifts of potentials (called also divergence or contrast functions) turn out to be again potentials, this time for the lifted statistical structures. We propose an analogous procedure for lifting statistical structures on Lie algebroids and lifting contrast functions which are defined on Lie groupoids. In particular, we study in detail Lie groupoid structures of higher tangent bundles of Lie groupoids. Our geometric constructions of lifts are illustrated by explicit examples, including some important statistical models and potential functions on Lie groupoids.

\bigskip\noindent
{\bf Keywords:}
\emph{Fisher-Rao metric, statistical manifolds, contrast functions, higher tangent bundles, Lie groupoids, Lie algebroids, lifts.}\par

\smallskip\noindent
{\bf MSC 2020:} 53B12; 58H05; 22A22; 53B05.	

\end{abstract}
\section{Introduction}
Since the pioneering work of Amari and Chentsov \cite{Amari1982,Amari1985,Amari1987,Amari2007,Amari2012,Amari2016,Censov1982}, information geometry has flourished greatly and became an object of intensive studies and various applications, especially in statistical decision rules and optimal inferences.
This theory appeared to provide a link  between different disciplines where statistical-probabilistic aspects play a relevant role. The search for such a link was clearly advocated by J.~A.~Wheeler \cite{Wheeler1989}. A remarkable theorem by Chentsov \cite{Censov1982}, proved in a categorical setting, states that the Fisher-Rao metric tensor is the only one respecting a monotonicity requirement under coarse grained transformations and, moreover, that the metric is invariant under the diffeomorphism group of the sample space.

Also quantum information geometry has been developed, which is not strange, as the standard quantum mechanics is a probabilistic-statistical theory.
The geometrical formulation of quantum mechanics has been used to show that the Fisher-Rao metric tensor can be obtained as the pull-back of the Fubini-Study metric tensor, when the statistical manifold is embedded as an appropriate Lagrangian submanifold of the complex projective space of pure quantum states \cite{Facchi2010}. The possibility to obtain the Fisher-Rao metric from the Fubini-Study metric shows a deep connection between geometric information theory and the geometrical formulation of quantum mechanics (indeed, both theories admit a statistical-probabilistic description).

The geometrical objects  introduced for statistics are Riemannian metrics, symmetric $(0,3)$-tensors, and dual pairs of torsionless affine connections. Such geometric structures give rise to the concept of a so called \emph{statistical manifold}, which is a purely geometric abstract of the geometries of statistical models. Moreover, the notion of `directed distances' (called also distinguishability functions, divergence functions, or contrast functions), introduced as potential functions in the classical setting, serve as `generating objects' of statistical manifolds. Classical potentials are related to relative entropies in the quantum setting. Thus, the latter were used as potential functions for quantum metrics on the space of quantum states. It should be noticed, however, that potential functions appear also in the description of K\"ahler manifolds, Frobenius manifolds, and that the connection with statistical manifolds was considered in a paper by Y.~Manin and his collaborators \cite{Combe2021}.

A natural problem is to find natural examples of statistical structures. An obvious idea is to build additional statistical models out of a given one. One possibility is lifting statistical structures to fibrations over statistical manifolds. In this paper we consider this lifting problem from the point of view of intrinsic differential geometry and functorial  constructions, by lifting statistical structures to higher tangent bundles. We will show that this approach provides lifted potential functions for the lifted geometric objects, an  interesting result providing an effective way of producing whole families of statistical structures.

The `naturality requirement' for the construction provides metric tensors which are not Riemannian but only pseudo-Riemannian, that is nowadays commonly accepted  in the literature, if statistical structures not necessarily associated with statistical models are concerned. As the metric is used mainly to introduce a notion of \emph{distinguishability} among probability distributions or quantum states, this rises the question of a proper interpretation for probability distributions which have zero-distance even though they are  distinct.
This aspect, however, will not be considered in this paper and will be dealt with in the future.

As we observed in \cite{Grabowska2019} (see also \cite{Grabowska2020}), all the methods of inducing statistical structures out of contrast functions actually depend only on the Lie groupoid structure of the pair groupoid $G=M\ti M$, and can be naturally generalized to the case of information geometry on Lie groupoids and Lie algebroids. Contrast functions on Lie groupoids determine Lie algebroid tensors and connections by differentiating the contrast functions with respect to the left- and the right-invariant vector fields associated with Lie algebroid sections.

It is known that higher tangent bundles of Lie groupoids (resp., Lie algebroids) are canonically also Lie groupoids (resp., Lie algebroids). We include into the paper deeper studies on Lie groupoid and Lie algebroid structures on higher tangent bundles of Lie groupoids and Lie algebroids. Then, we define higher lifts of statistical Lie algebroids. We also prove that, in the case when the original statistical structure on a Lie algebroid comes from a contrast function on the corresponding (local) Lie groupoid, the lifted statistical structure comes from the lift  of the contrast function. Additionally, we offer numerous examples of all these procedures, including
some important statistical models.

\section{Differential geometry of statistical models}

Let us consider a paradigmatic experiment in which we perform a series of measurements obtaining some experimental distribution of them. Typically, upon our results we want to find the `true' distribution, or rather identify a distribution (taken from a prescribed family), that fits best to the obtained data. It means that we are dealing with probability distributions $ p(\omega,\mathbf{x})d\omega $ depending on some parameters $\mathbf{x}$, identifying members of the considered family of distributions. Thus, to achieve the mentioned goals we need a parameterization of the family of distributions and possibly a kind of distance-like function that measures a relative distance between distributions, or more generally, allows distinguishing two of them.

\textit{Information geometry} provides a link between statistical models and differential (Riemannian) geometry. The idea is to parameterize the space of probability distributions on a measure space $ \Omega $ (the sample space) with a measure $\xd\omega $.
Let thus  $ \mathcal{P}(\Omega) $ denote such a  family of probability distributions in the sample space.  We parameterize it \textit{via} a map $ M \ni x \mapsto p(\omega,x)\xd\omega $ from a differentiable manifold  $ M $. That is, for each point $ x $ in the parameter space $ M $, we have a probability measure $ p(\cdot, x) $ on $ \Omega$.

The notions of a distance and distinguishability  between distributions  can be formulated \cite{Amari2007, Amari2016} by introducing a two-point \emph{potential function} $F: M \times  M \rightarrow \mathbb{R}$  (see e.g. \cite{Ciaglia2017} for an intrinsic approach) usually called a \emph{contrast function} or a \emph{divergence}.  It is a function vanishing on the diagonal $\zD_M=\{ (x,x)\,|\, x\in M\}$ and such that its Hessians at points of the diagonal (they have an unambiguous geometric meaning)  are non-degenerate. It is usually assumed that the potential function is non-negative, so that the Hessians are positive definite.

Let, $(x^j)$ be a coordinate system on the first  copy of $ M $ and $(y^j)$ be the same coordinates but on the second copy of $M$. If $F$ is at least $C^3$, the condition imposed on $F$ implies \cite{Matumoto1993}
$$
	\left. \frac{\partial F}{\partial x^j}\right|_{x=y}=\left. \frac{\partial F}{\partial y^j}\right|_{x=y}=0\,.
$$
We can now construct a two- and a three-tensor $g$ (a metric) and $T$ (the skewness tensor),
$$
	g_{jk}=\left. \frac{\partial^2F}{\partial x^j\partial x^k}\right|_{x=y}=\left. \frac{\partial^2F}{\partial y^j\partial y^k}\right|_{x=y}=-\left. \frac{\partial^2F}{\partial y^j\partial x^k}\right|_{x=y},
$$
and
$$
	T_{jkl}=\left. \frac{\partial^3F}{\partial x^l\partial y^k\partial y^j}\right|_{x=y}=-\left. \frac{\partial^3F}{\partial y^l\partial x^k\partial x^j}\right|_{x=y}.
$$
For given tensors $g$ and $T$, we consider a one-parameter family of torsionless connections
$$
	\Gamma^\alpha_{jkl}:=\Gamma^{g}_{jkl}-\frac{\alpha}{2}T_{jkl}\,,
$$
where $\Gamma^g_{jkl}$ are the Christoffel symbols for the metric tensor $g$ (the Levi-Civita connection). For every Riemannian manifold there is a concept of  \emph{duality} of connections, namely $\nabla$ and $\nabla^\ast$ are dual with respect to $g$ if for all vector fields $X,Y,Z$ on $ M $
\be\label{eq:dual}
	Z\left(g(X,Y)\right)= g\left(\nabla_ZX,Y\right) +g\left(X, \nabla^{\ast}_Z Y \right).
\ee
One can check that $\nabla^\alpha$ and $\nabla^{-\alpha}$ are dual to each other, i.e. $(\nabla^\alpha)^\ast=\nabla^{-\alpha}$.
The statistical model is called \textit{self-dual} if $\nabla^\alpha=\nabla^{-\alpha}$.
The self-duality  implies $T=0$.

The most prominent example of a statistical model is that proposed by Rao \cite{Rao1945}. The manifold $ M $ is equipped with the metric (the Fisher-Rao metric)
$$
	g_{jk}(x)  = \int_\Omega
	p(\omega,x)
	\left( {\frac{{\partial \log \left(p(\omega,x) \right)}}{{\partial x ^j }}}\right)
	\left( {\frac{{\partial \log \left(p(\omega,x) \right)}}{{\partial x ^k }}} \right)d\omega
$$
and the corresponding skewness tensor
$$
	T_{jkl}(x)  = \int_\Omega
	p(\omega,x)
	\left( {\frac{{\partial \log \left(p(\omega,x) \right)}}{{\partial x ^j }}}\right)
	\left( {\frac{{\partial \log \left(p(\omega,x) \right)}}{{\partial x ^k }}} \right)\left( {\frac{{\partial \log \left(p(\omega,x) \right)}}{{\partial x^l }}} \right)d\omega\,.
$$
As a contrast function defining the Fisher-Rao metric we can take the so called \emph{Kullback-Leibler divergence} (\emph{Shannon relative entropy})
\be\label{KL}
F(x,y)=	 D_{KL}(x,y)=\int_\Omega p(\omega,x)\log\frac{p(\omega,x)}{p(\omega,y)}d\omega\,.
\ee
The geometrical structures of statistical models \cite{Amari1982,Amari1985,Amari1987,Amari2007,Amari2012,Amari2016,Barndorff1986,Censov1982,Murray1993,Rao1945} were an inspiration for the concept of a general \emph{statistical manifold} \cite{Lauritzen1987}, which is \emph{a priori} not necessarily associated with a statistical model. It can be analyzed in terms of (pseudo)-Riemannian geometry without any particular connection to statistics. However, every statistical manifold has a statistical model, as it was demonstrated in \cite{Le2006}. In the following section we shall review the definition of a statistical manifold as a geometric construction.

\section{Statistical manifolds}
The original definition of a statistical manifold by Lauritzen \cite{Lauritzen1987} is the following (see also \cite{Calin2014,Matsuzoe2007,Matumoto1993,Zhang2020}).
\begin{definition}
A \emph{statistical manifold} is a Riemannian manifold $(M,g)$ with a symmetric covariant 3-tensor $T$.
\end{definition}
\begin{remark}\label{re}
It is obvious that any submanifold $N$ of a statistical manifold is a statistical manifold itself with the tensors $g_N=g\big|_N$ and $T_N=T\big|_N$.
\end{remark}
\noindent All geometric statistical models carry such a structure. The tensor $T$ is called the \emph{skewness tensor} (in statistical models -- \emph{Amari-Chensov tensor}) or a \emph{cubic form} (\emph{cubic tensor}). Recently, also pseudo-Riemannian metrics have been admitted in the definition of statistical manifolds, as all main features of statistical manifolds may be carried over to this more general framework. We will use this more general definition. Note however, that Remark \ref{re} is no longer true in the pseudo-Riemannian case.

\bigskip\noindent
In our considerations, all geometric objects (manifolds, functions, vector fields, metrics, etc.) will be smooth and all connections will be affine connections.

There are several equivalent definitions of a statistical manifold in the literature. Depending on the problem, one or another may be more suitable. A particular r\^ole is played  by affine connections and their duals with respect to the metric. We mentioned this already in the previous section about statistical models, see (\ref{eq:dual}).
\begin{definition}
Let $\nabla$ be a connection on a pseudo-Riemannian manifold $(M,g)$. The \emph{dual connection} (with respect to $g$), called also the \emph{$g$-conjugate connection}, is the connection $\nabla^*$ defined by
$$g(\nabla^*_XY,Z)=X\,g(Y,Z)-g(Y,\nabla_XZ)\,.$$
\end{definition}
\noindent Note that it is a true duality as $\left(\nabla^*\right)^*=\nabla$, and that the Levi-Civita connection $\nabla^g$ for $(M,g)$ is the only self-dual connection which is torsion-free.

\medskip\noindent
We will say that two geometric structures on a pseudo-Riemannian manifold are \emph{equivalent} if one of them canonically determines the other and \emph{vice versa}.
The following is essentially due to Lauritzen \cite{Lauritzen1987}.
\begin{theorem}\label{statm} Let $(M,g)$ be a pseudo-Riemannian manifold. The following geometric structures on $(M,g)$ are equivalent.
\begin{enumerate}
\item a totaly symmetric $(0,3)$-tensor $T$;
\item a torsion-free connection $\nabla$ such that $\nabla g$ is symmetric ($(g,\nabla)$ are \emph{Codazzi coupled});
\item a torsion free connection $\nabla$ such that $$\nabla^g=\frac{1}{2}(\nabla+\nabla^*)\,;$$
\item a pair $(\nabla,\nabla^*)$ of dual torsion-free connections (\emph{dualistic structure}).
\end{enumerate}
The tensor $T$ and the connections $\nabla,\nabla^*$ are related by
\beas g\left(\nabla_XY,Z \right)&=&g\left(\nabla^g_XY,Z \right)-\frac{1}{2}T(X,Y,Z)\,,\\
g\left(\nabla^*_XY,Z \right)&=&g\left(\nabla^g_XY,Z \right)+\frac{1}{2}T(X,Y,Z)\,.
\eeas
\end{theorem}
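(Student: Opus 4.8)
The plan is to fix the Levi-Civita connection $\nabla^g$ as a reference point and to encode every competing torsion-free connection by its difference tensor, thereby converting all four conditions into symmetry statements about a single cubic form. First I would write any connection as $\nabla_XY=\nabla^g_XY+K(X,Y)$ for a $(1,2)$-tensor $K$, and lower an index with the metric to define
\[
C(X,Y,Z)=g\bigl(\nabla_XY-\nabla^g_XY,\,Z\bigr).
\]
Since $\nabla^g$ is torsion-free, the torsion of $\nabla$ equals $K(X,Y)-K(Y,X)$, so $\nabla$ is torsion-free precisely when $C$ is symmetric in its first two slots. This single observation will be the engine of the entire argument.

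Next I would compute the two auxiliary objects that the four conditions involve. Inserting the metric-compatibility identity $X\,g(Y,Z)=g(\nabla^g_XY,Z)+g(Y,\nabla^g_XZ)$ into the defining formula for the dual connection yields, after a short calculation, the cubic form of $\nabla^*$ as
\[
C^*(X,Y,Z)=-\,C(X,Z,Y),
\]
and a parallel computation expresses the non-metricity as
\[
(\nabla_Xg)(Y,Z)=-\,C(X,Y,Z)-C(X,Z,Y).
\]
From here each clause becomes a transposition symmetry of $C$: torsion-freeness of $\nabla$ gives invariance under the $(1,2)$-swap; the Codazzi condition $(\nabla_Xg)(Y,Z)=(\nabla_Yg)(X,Z)$ forces the $(1,3)$-swap; the averaging identity $\nabla^g=\tfrac12(\nabla+\nabla^*)$, equivalently $C+C^*=0$, forces the $(2,3)$-swap; and torsion-freeness of $\nabla^*$, read off from $C^*$, again forces the $(1,3)$-swap.

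The key step is then purely group-theoretic: any two distinct transpositions generate all of $S_3$, so each of conditions (2), (3), (4) — always paired with the $(1,2)$-symmetry coming from torsion-freeness of $\nabla$ — is equivalent to \emph{full} symmetry of $C$. Setting $T=-2C$ identifies this with condition (1) that $T$ be a symmetric $(0,3)$-tensor, and substituting back into the expressions for $C$ and $C^*$ reproduces exactly the two displayed relations linking $\nabla,\nabla^*,\nabla^g$ and $T$; the apparent asymmetry of the $\nabla^*$-formula evaporates once $T$ is known to be fully symmetric, since then $C^*(X,Y,Z)=-C(X,Z,Y)=\tfrac12 T(X,Z,Y)=\tfrac12 T(X,Y,Z)$. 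I expect the only delicate point to be the bookkeeping: keeping the three slots straight so that each hypothesis lands on the correct transposition, and verifying that $(\nabla^*)^*=\nabla$ together with the self-dual characterization of $\nabla^g$ quoted just before the theorem are consistent with $C^*(X,Y,Z)=-C(X,Z,Y)$. Everything else reduces to routine tensor computation.
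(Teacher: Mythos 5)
Your proposal is correct, but note that the paper itself offers no proof of this theorem --- it is stated with the attribution ``essentially due to Lauritzen'' and the verification is left to the reference. Judged on its own merits, your argument is sound and complete in outline: the identities $C^*(X,Y,Z)=-C(X,Z,Y)$ and $(\nabla_Xg)(Y,Z)=-C(X,Y,Z)-C(X,Z,Y)$ are both correct, torsion-freeness of $\nabla$ does give the $(1,2)$-symmetry of $C$, and each of conditions (2), (3), (4) then contributes a second transposition ($(1,3)$, $(2,3)$, $(1,3)$ respectively), so the $S_3$-generation argument reduces all four clauses to total symmetry of $C$, with $T=-2C$ matching the displayed relations. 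The one point worth writing out explicitly in a final version is the direction $(1)\Rightarrow(2),(3),(4)$: given a symmetric $T$ you must define $\nabla$ by $g(\nabla_XY,Z)=g(\nabla^g_XY,Z)-\tfrac12T(X,Y,Z)$, observe that adding a $(1,2)$-tensor to a connection yields a connection, and then read off torsion-freeness and the remaining conditions from the total symmetry of $C=-\tfrac12T$; this also shows the correspondence $\nabla\leftrightarrow T$ is a bijection, which is what ``equivalent structures'' means in the sense the paper fixes just before the theorem.
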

\begin{remark}
It is clear that $(M,g,\nabla)$ is a statistical manifold if and only if $(M,g,\nabla^*)$ is a statistical manifold, the so called \emph{dual statistical manifold}.
Actually, the dual pair $(\nabla,\nabla^*)$ can be extended to a one-parametr family of connections,
$$g\left(\nabla^\za_XY,Z \right)=g\left(\nabla^g_XY,Z \right)-\frac{\za}{2}T(X,Y,Z)\,.$$
Here, $\za\in\R$ and $\nabla^{-\za}=\left(\nabla^\za\right)^*$. Clearly $\nabla^1=\nabla$ (called the \emph{exponential connection}), $\nabla^{-1}=\nabla^*$ (called the \emph{mixture connection}), and $\nabla^0=\nabla^g$ is the metric (Levi-Civita) connection. The tensor $T$ can be obtained from the connection \emph{via} the formula
$$T(X,Y,Z)=g\left(\nabla^*_XY,Z \right)-g\left(\nabla_XY,Z \right)=(\nabla_X\,g)(Y,Z)$$
and is sometimes also called the \emph{cubic form of $(M,g,\nabla)$}.
\end{remark}
\noindent Recently, \emph{statistical manifolds admitting torsion} are considered as well \cite{Henmi2011,Kurose2007,Matsuzoe2010}.
\begin{definition} Let $g$ be a pseudo-Riemannian metric on a manifold $M$ and $\nabla$ be a connection on $M$.
We call the pair $(M,g,\nabla)$ a \emph{statistical manifold admitting torsion} (SMAT) if
$$(\nabla_Xg)(Y,Z)-(\nabla_Yg)(X,Z)=-g(\Tor^\nabla(X,Y),Z)\,.$$
In this case we call the pair $(g,\nabla)$ \emph{torsion coupled}.
\end{definition}
\begin{proposition} A pair $(g,\nabla)$ is torsion coupled if and only if\, $\nabla^*$ is torsion-free.
\end{proposition}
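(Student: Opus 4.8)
The plan is to reduce everything to a single computation linking the covariant derivative $\nabla g$ to the difference $\nabla^*-\nabla$ of the two connections, and then to antisymmetrize. First I would unwind the two definitions in play. From the definition of the dual connection we have $g(\nabla^*_XY,Z)=X\,g(Y,Z)-g(Y,\nabla_XZ)$, while the covariant derivative of the metric is $(\nabla_Xg)(Y,Z)=X\,g(Y,Z)-g(\nabla_XY,Z)-g(Y,\nabla_XZ)$. Subtracting, the terms $X\,g(Y,Z)$ and $g(Y,\nabla_XZ)$ cancel, yielding the fundamental identity
$$(\nabla_Xg)(Y,Z)=g\!\left(\nabla^*_XY-\nabla_XY,\,Z\right).$$

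Next I would antisymmetrize this identity in $X$ and $Y$. The left-hand side becomes exactly the left-hand side of the torsion-coupling condition, while the right-hand side becomes $g\big((\nabla^*_XY-\nabla^*_YX)-(\nabla_XY-\nabla_YX),\,Z\big)$. Writing $\nabla_XY-\nabla_YX=\Tor^\nabla(X,Y)+[X,Y]$ and likewise $\nabla^*_XY-\nabla^*_YX=\Tor^{\nabla^*}(X,Y)+[X,Y]$, the two Lie-bracket terms cancel, so the right-hand side collapses to $g\big(\Tor^{\nabla^*}(X,Y)-\Tor^\nabla(X,Y),\,Z\big)$. Hence
$$(\nabla_Xg)(Y,Z)-(\nabla_Yg)(X,Z)=g\big(\Tor^{\nabla^*}(X,Y),Z\big)-g\big(\Tor^\nabla(X,Y),Z\big).$$

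Comparing this with the SMAT condition $(\nabla_Xg)(Y,Z)-(\nabla_Yg)(X,Z)=-g(\Tor^\nabla(X,Y),Z)$, the $\Tor^\nabla$ contributions cancel on both sides and the whole torsion-coupling requirement reduces to $g(\Tor^{\nabla^*}(X,Y),Z)=0$ for all vector fields $X,Y,Z$. Finally I would invoke the non-degeneracy of the pseudo-Riemannian metric $g$: since $Z$ is arbitrary, this vanishing is equivalent to $\Tor^{\nabla^*}(X,Y)=0$ for all $X,Y$, that is, to $\nabla^*$ being torsion-free. Because each step is an equivalence, this establishes both implications of the proposition simultaneously.

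The computation is entirely formal and I do not expect a genuine obstacle; the only points requiring care are the bookkeeping of signs in the fundamental identity and the cancellation of the bracket terms upon antisymmetrization, together with the explicit use of non-degeneracy of $g$ in the last step (without which one could only conclude that $\Tor^{\nabla^*}$ is $g$-orthogonal to every $Z$, not that it vanishes).
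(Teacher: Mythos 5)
Your argument is correct, and each step is indeed an equivalence: the identity $(\nabla_Xg)(Y,Z)=g(\nabla^*_XY-\nabla_XY,Z)$, the cancellation of the Lie brackets under antisymmetrization, and the final appeal to non-degeneracy of $g$ are all sound. The paper states this proposition without proof, so there is no argument to compare against; your computation is the natural one, and its key identity is in fact already recorded in the paper's preceding remark in the form $T(X,Y,Z)=g(\nabla^*_XY,Z)-g(\nabla_XY,Z)=(\nabla_Xg)(Y,Z)$.
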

\noindent In consequence, if $\nabla,\nabla^*$ are torsion coupled with $g$, then $(M,g,\nabla,\nabla^*)$ is a statistical manifold.

\section{Contrast functions}
A very useful method of constructing statistical manifolds is that by means of contrast functions \cite{Blesild1991,Ciaglia2017,Ciaglia2018,Ciaglia2019,Euguchi1985,Euguchi1992}, which are also called \emph{potentials, divergences, yokes}, etc.
Let $M$ be a manifold and $X$ be a vector field on $M$. Denote with $^1\!X$ (resp., $^2\!X$) the vector field on $M\ti M$ which is $X$ on the left factor (resp., on the right factor). In coordinates, if $(z^i)$ are coordinates on $M$ and $(x^i)$, $(y^i)$ denote the same coordinates on the first and the second factor, we have
$$^1(f(z)\pa_{z^i})(x,y)=f(x)\pa_{x^i}\,,\quad ^2(f(z)\pa_{z^i})(x,y)=f(y)\pa_{y^i}\,.$$
Note that the vector fields $^1\!X$ and $^2Y$ commute for all $X,Y$.
It will be convenient to use the notation
$$^1\!X_1\cdots ^1\!\!X_k\,^2Y_1\cdots ^2Y_l(F)=:F[X_1\cdots X_k|Y_1\cdots Y_l]\,.$$

\begin{remark}\label{rem} For a function $F$ on a manifold $N$ of dimension $n$, with vanishing $k$-th jet at $p\in N$, and for any vector fields $X_1,\dots,X_{k+1}$ on $N$, the expression $X_1\cdots X_{k+1}(F)(p)$ depends only on $X_i(p)$, $i=1,\dots,k+1$, and
\be\label{Fmetric} g^F(p)(X_1,\dots,X_{k+1})=(-1)^k\,(X_1\cdots X_{k+1})(F)(p)\ee
defines uniquely a symmetric $(0,k+1)$-tensor $g^F(p)$ from $(\sT^*_p)^{\ot k}N$ (cf. \cite[Lemma 5.1]{Grabowska2019}).
We say that $g^F(p)$ is \emph{induced} by the function $F$.

More generally, if $N_0$ is a closed submanifold of $N$ and a function $F:N\to\R$ has vanishing $k$-th jets at points of $N_0$, then for points $p\in N_0$ the expression $X_1\cdots X_{k+1}(F)(p)$ depends only on the classes $[X_i(p)]$ of $X_i(p)$ in the normal bundle
$$E(N_0,N)=(\sT N)\big|_{N_0}/\sT N_0$$
of $N_0\subset N$, $i=1,\dots,k+1$, and
$$ g^F([X_1],\dots,[X_{k+1}])=(-1)^k\,(X_1\cdots X_{k+1})(F)\,\big|_{N_0}$$
defines uniquely a symmetric $(k+1)$-tensor $g^F$ being a section of  $E^*(N_0,N)^{\ot k}$, where $E^*(N_0,N)$ is the vector bundle dual to the normal bundle $E(N_0,N)$.

\begin{definition}\label{Ncf} We say that $g^F$ is \emph{induced} by $F$, and that $F:N\to\R$ is a \emph{contrast function on $(N_0,N)$} if $g^F$ is non-degenerate (it is a `pseudo-Riemannian metric' on $E(N_0,N)$).
\end{definition}
\end{remark}
\noindent The following proposition will be useful while discussing local forms of contrast functions and other geometric objects on statistical manifold.
\begin{proposition}\label{kjet} If $u^i$ are local coordinates in a neighbourhood of $p\in N$, $u^i(p)=0$, then $F:N\to\R$ has the $k$-th jet at $p$ vanishing if and only if $F$ can be locally written as
\be\label{kjet1} F(u)=F_{\za_1,\dots,\za_n}(u)\,(u^1)^{\za_1}\cdots (u^n)^{\za_{n}}, \quad \alpha_j\in \N\,, \quad \sum_{j=1}^{n}\alpha_j=k+1\,,
\ee
for some functions $F_{\za_1,\dots,\za_{n}}$ defined in a neighbourhood of $p$. Moreover, for the $(0,k+1)$-tensor (\ref{Fmetric}) we have
$$g^F\left(\pa_{u^{i_1}},\dots,\pa_{u^{i_{k+1}}}\right)(p)=(\zb_1)!\cdots(\zb_n)!\,F_{\zb_1,\dots,\zb_n}(p)\,,$$
where $\zb_j$ is the number of those $i_l$ which equal $j$.
\end{proposition}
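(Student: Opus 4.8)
The plan is to read the whole statement as a pointwise Taylor/jet assertion in the chart $(u^i)$, and to reduce it to the classical multivariate Taylor formula with integral remainder, which here plays the role of an iterated Hadamard lemma. For the nontrivial implication I would assume that the $k$-th jet of $F$ vanishes at $p$, i.e. $\pa^\za F(p)=0$ for every multi-index $\za$ with $|\za|\le k$, and expand $F$ about the origin $u(p)=0$:
\[
F(u)=\sum_{|\za|\le k}\frac{1}{\za!}\,\pa^\za F(0)\,u^\za+\sum_{|\za|=k+1}u^\za\,F_\za(u),\qquad F_\za(u)=\frac{|\za|}{\za!}\int_0^1(1-t)^{k}\,\pa^\za F(tu)\,\mathrm{d}t .
\]
The jet hypothesis annihilates the polynomial part, and the remainder coefficients $F_\za$ are smooth near $p$ by differentiation under the integral sign; this is exactly the asserted form (\ref{kjet1}), the suppressed summation running over all multi-indices $\za$ with $|\za|=k+1$.

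The converse (the implication the proposition calls obvious) runs the other way: if $F=\sum_{|\za|=k+1}u^\za F_\za(u)$, then each summand carries a monomial factor of degree $k+1$, so any derivative of order at most $k$ still leaves at least one factor $u^j$ and hence vanishes at $u=0$; thus the $k$-th jet of $F$ vanishes. At this point I would also record that, although the decomposition (\ref{kjet1}) is far from unique, the numbers $F_\za(p)$ are intrinsic: matching Taylor coefficients gives $\pa^\za F(p)=\za!\,F_\za(p)$, so $F_\za(p)$ is determined by $F$ and the chart. This is what makes the component formula well posed.

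For the ``moreover'' part I would compute the components of the tensor (\ref{Fmetric}) directly from the decomposition, taking $X_l=\pa_{u^{i_l}}$ and evaluating $\pa_{u^{i_1}}\cdots\pa_{u^{i_{k+1}}}F$ at $p$. Expanding each term $u^\za F_\za$ by the Leibniz rule, a summand survives at $u=0$ only when all $k+1$ derivatives act on the monomial $u^\za$, since any derivative landing on $F_\za$ leaves a monomial of positive degree which vanishes at the origin. This forces the multiset $\{i_1,\dots,i_{k+1}\}$ to coincide with $\za$, i.e. $\za=\zb$ where $\zb_j$ counts the indices $i_l$ equal to $j$, and then $\pa^\zb u^\zb=\zb_1!\cdots\zb_n!$. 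Feeding this into (\ref{Fmetric}) produces $g^F(\pa_{u^{i_1}},\dots,\pa_{u^{i_{k+1}}})(p)=\zb_1!\cdots\zb_n!\,F_{\zb_1,\dots,\zb_n}(p)$, as claimed.

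I do not expect a genuine obstacle: the argument is Taylor-with-remainder followed by Leibniz-rule bookkeeping. The two points needing care are the smoothness of the remainder coefficients (handled by the integral form together with differentiation under the integral) and the combinatorics of isolating the single surviving term in the Leibniz expansion; the latter I would package into the multi-index identity $\big(\pa^\za u^{\za'}\big)(0)=\za!\,\zd_{\za\za'}$ to keep the factorial and sign bookkeeping transparent.
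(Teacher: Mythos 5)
Your proposal is correct and follows essentially the same route as the paper: the paper obtains the decomposition (\ref{kjet1}) by iterating Hadamard's lemma (the integral form $\int_0^1\pa_{u^i}F(su,v)\,\xd s$ used in the proof of Theorem \ref{thcontrast}) by induction, which is exactly your one-shot Taylor formula with integral remainder, and your Leibniz-rule bookkeeping is the paper's ``the rest follows easily by differentiating (\ref{kjet1})'' spelled out. The only point worth flagging is the sign: a literal application of (\ref{Fmetric}) yields an extra factor $(-1)^k$ in the component formula, which both you and the paper silently drop.
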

\begin{proof} Indeed, it is well known (cf. also the proof of the next proposition) that if $F(p)=0$, then there exist functions $F_i$ such that $F(u)=F_i(u)\,u^i$. We get (\ref{kjet}) by induction. The rest follows easily by differentiating (\ref{kjet1}). The converse is obvious.

\end{proof}
\noindent  A particular case of contrast functions used in the theory of statistical manifolds is the following. Denote the diagonal submanifold in $M\ti M$, i.e. $\{(x,x)\,|\, x\in M\}\subset M\ti M$, with  $\zD_M$.
\begin{definition}
A function $F:M\ti M\to \R$ we call a \emph{contrast function} (\emph{potential function}, \emph{divergence function}) on $M$ if the first jets of $F$ vanish on $\zD_M$ (i.e. $F\big|_{\zD_M}=0$ and $\xd F\big|_{\zD_M}=0$) and
$g^F$ is a pseudo-Riemannian metric.
\end{definition}
\begin{remark}
Note that, in consequence of the fact that we admit pseudo-Riemannian metrics in the definition of a statistical manifold, our concept of a contrast function is more general that the `classical' one and accepts $g^F$ to be only pseudo-Riemannian.
\end{remark}
\begin{remark} According to Remark \ref{rem}, the pseudo-Riemannian metric $g^F$ reads
$$g^F(X,Y)(x)=-F[X|Y](x,x)\,.$$
\end{remark}
\noindent Of course, if $F$ is a contrast function and $F\ge 0$, then $g$ is Riemannian. Denote $F^*(x,y)=F(y,x)$. It is easy to see that $F$ is a contrast function if and only if $F^*$ is a contrast function and that $g^F=g^{F^*}$.
\begin{remark}
Actually, any statistical manifold is induced by a contrast function \cite{Matumoto1993}. It is interesting that contrast functions on $M$ define also symplectic structures on $M\ti M$ \cite{Barndorff1997}.
\end{remark}
\noindent
Let $M$ be a manifold of dimension $n$, let $p\in M$, and $(x^i)$ be local coordinates on $M$ in a neighbourhood of $p$, $x^i(p)=0$. Let $(y^i)$ be the same coordinates on another copy of $M$, so that $(x^i,y^j)$ are local coordinates in a neighbourhood of $(p,p)\in M\ti M$.
\begin{theorem}\label{thcontrast} {\rm (Local characterization of contrast functions)} Let $M$ be a manifold, $p\in M$, and $(x^i,y^i)$ be the local coordinates in a neighbourhood of $(p,p)$ as above. Then a function  $F:M\ti M\to\R$ is a contrast function on $M$ with the induced pseudo-Riemannian metric $g^F$, which locally reads
$$ g^F=g^F_{ij}(x)\,\xd x^i\ot\xd x^j\,,$$
if and only if in a neighbourhood of $(p,p)=(0,0)$ we have
\be\label{contrast}
F(x,y)=\frac{1}{2}\,(x^i-y^i)(x^j-y^j)\,h_{ij}(x,y)\,,
\ee
where $h_{ij}=h_{ji}$ and $[h_{ij}(0,0)]$ is an invertible matrix. In such a case, $g^F_{ij}(x)=h_{ij}(x,x)$ for $x$ in a neighbourhood of $0$.
\end{theorem}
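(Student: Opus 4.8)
The plan is to prove both implications by working in "relative coordinates" near the diagonal and reading off the metric from the coordinate form of the formula $g^F(X,Y)(x)=-F[X|Y](x,x)$ recorded in the remarks above, namely $g^F_{ij}(x)=-\pa_{x^i}\pa_{y^j}F(x,y)\big|_{y=x}$. Both directions then reduce to (i) checking that vanishing of the first jet on $\zD_M$ is equivalent to a second-order factorization in the variables $x^i-y^i$, and (ii) matching the mixed second derivative $\pa_{x^i}\pa_{y^j}F$ against the coefficient functions $h_{ij}$.

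For sufficiency I would start from $F(x,y)=\tfrac12(x^i-y^i)(x^j-y^j)h_{ij}(x,y)$ and verify directly that it is a contrast function. Every monomial carries two factors vanishing on $\zD_M$, so $F\big|_{\zD_M}=0$ is immediate, and after a single differentiation in $x^k$ or $y^k$ at least one such factor always survives, giving $\xd F\big|_{\zD_M}=0$; hence the first jet of $F$ vanishes on the diagonal. The only genuine computation is the mixed second derivative: applying $\pa_{x^i}\pa_{y^j}$ and restricting to $y=x$, the only surviving contribution comes from the terms where $\pa_{y^j}$ and $\pa_{x^i}$ each hit one of the two linear factors, yielding $-\tfrac12(h_{ij}+h_{ji})=-h_{ij}$, since all other terms retain a vanishing factor. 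Thus $g^F_{ij}(x)=h_{ij}(x,x)$, and invertibility of $[h_{ij}(0,0)]$ makes $g^F$ nondegenerate in a neighbourhood of $p$, i.e. pseudo-Riemannian.

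For necessity the key device is a change to relative coordinates combined with a second-order Taylor (Hadamard) expansion. I would introduce coordinates $(\bar y^i,w^i)$ with $\bar y^i=y^i$ and $w^i=x^i-y^i$, so that $\zD_M=\{w=0\}$. The hypotheses $F\big|_{\zD_M}=0$ and $\xd F\big|_{\zD_M}=0$ say exactly that, for each fixed $\bar y$, the function $w\mapsto F$ vanishes to second order at $w=0$; Taylor's theorem with integral remainder then produces $F=\tfrac12 w^iw^j\,\tilde h_{ij}(\bar y,w)$ with $\tilde h_{ij}=\tilde h_{ji}$ smooth, given by $\tilde h_{ij}(\bar y,w)=2\int_0^1(1-t)\,\pa_{w^i}\pa_{w^j}F(\bar y,tw)\,\xd t$, and setting $h_{ij}(x,y)=\tilde h_{ij}(y,x-y)$ yields the claimed symmetric form. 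To identify the metric I would use the chain rule for this substitution, $\pa_{x^i}=\pa_{w^i}$ and $\pa_{y^j}=\pa_{\bar y^j}-\pa_{w^j}$, so that $g^F_{ij}=-\pa_{x^i}\pa_{y^j}F\big|_{w=0}=-\pa_{w^i}\pa_{\bar y^j}F\big|_{w=0}+\pa_{w^i}\pa_{w^j}F\big|_{w=0}$. The first term vanishes because $\pa_{w^i}F(\bar y,0)\equiv 0$ is the normal part of $\xd F\big|_{\zD_M}=0$, and differentiating this identity along the diagonal in $\bar y^j$ kills it; the second term equals $\tilde h_{ij}(\bar y,0)=h_{ij}(x,x)$ since $2\int_0^1(1-t)\,\xd t=1$. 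Hence $g^F_{ij}(x)=h_{ij}(x,x)$, and since $g^F$ is pseudo-Riemannian, $[h_{ij}(0,0)]=[g^F_{ij}(0)]$ is invertible.

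The main obstacle, and really the only delicate point, is the bookkeeping in this last step: one must pass correctly between the $(x,y)$ and $(\bar y,w)$ frames and recognize that the unwanted cross term $\pa_{w^i}\pa_{\bar y^j}F$ drops out precisely because the first jet of $F$ vanishes \emph{identically} along $\zD_M$, not merely at the single point $p$. Everything else is a routine, if slightly tedious, application of Hadamard's lemma together with the coordinate expression for $g^F$.
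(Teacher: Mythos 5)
Your proof is correct and follows essentially the same route as the paper: both directions rest on passing to coordinates adapted to the diagonal (you use $(\bar y, w)=(y,x-y)$, the paper uses $(u,v)=(x-y,x+y)$) and extracting the factor $(x^i-y^i)(x^j-y^j)$ from the vanishing first jet via a Hadamard-type argument, then reading off $g^F_{ij}(x)=h_{ij}(x,x)$ from the mixed second derivative. The only cosmetic difference is that you apply a single second-order Taylor formula with integral remainder where the paper iterates the first-order Hadamard lemma twice.
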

\begin{proof} Suppose $F$ is a contrast function on $M$, and consider new coordinates in a neighbourhood of $(p,p)\in M\ti M$:
$$u^i=x^i-y^i\,\quad v^i=x^i+y^i\,.$$
The diagonal submanifold $\zD_M$ is defined locally by $u^i=0$, $i+1,\dots,n$, and $F(0,v)=0$. Moreover, as $\xd F(0,v)=0$, we have
$$\left(\frac{\pa F}{\pa u^j}\right)(0,v)=0\quad\text{for}\quad j=1,\dots,n\,.$$
For fixed $(u,v)$ and  $t\in\R$ put $G(t)=F(tu,v)$. We have
$$F(u,v)=G(1)-G(0)=\int_0^1G'(s)\,\xd s=\int_0^1\frac{\pa F}{\pa u^i}(su,v)\,u^i\,\xd s=\left(\int_0^1\frac{\pa F}{\pa u^i}(su,v)\,\xd s\right) u^i\,.$$
Denote
$$h_i(u,v)=\left(\int_0^1\frac{\pa F}{\pa u^i}(su,v)\,\xd s\right)\,,$$
so that $F(u,v)=h_i(u,v)u^i$. Since $\left(\frac{\pa F}{\pa u^j}\right)(0,v)=h_i(0,v)=0$, repeating the previous
calculation we get $h_i(u,v)=\frac{1}{2}\,h_{ij}(u,v)\,u^j$, so finally
$$F(u,v)=\frac{1}{2}\,u^i\,u^j\,h_{ij}(u,v)$$
for some functions $h_{ij}=h_{ji}$ defined in a neighbourhood of $(p,p)$.
It is now easy to see that
\be\label{metric00} g^F_{ij}(0)=g^F(\pa_{x^i},\pa_{x^j})(x)=-\frac{\pa^2 F}{\pa x^i\pa y^j}(x,x)=h_{ij}(x,x)\,.\ee
Conversely, if $F$ has the local form (\ref{contrast}) with $h_{ij}=h_{ji}$, then clearly $F(x,x)=0$. Moreover,
$$\frac{\pa F}{\pa x^i}(x,x)=\big[(x^j-y^j)h_{ij}\big](x,x)=0\,.$$
Similarly, $\frac{\pa F}{\pa y^i}(x,x)=0$, so $\xd F(x,x)=0$.
Finally, we show, exactly like in (\ref{metric00}), that the induced tensor $g^F$ satisfies $g^F_{ij}(x)=h_{ij}(x,x)$. Hence, $g^F$ is non-degenerate, so a pseudo-Riemannian metric, that finishes the proof.

\end{proof}
\noindent  A straightforward generalization of the above result, with completely analogous proof, is the following.
\begin{theorem}\label{thcontrast1}
Let $N_0$ be a closed submanifold of a manifold $N$ and $F:N\to\R$. Then $F$ is a contrast function on $(N_0,N)$ if and only if, for any local coordinates $(\zx^a,z^i)$ in $N$, in which $N_0$ is defined by $z^i=0$, the function $F$ takes the form
\be\label{contrast1}
F(\zx,z)=\frac{1}{2}\,z^iz^j\,h_{ij}(\zx,z)\,,
\ee
where $h_{ij}=h_{ji}$ and $[h_{ij}(\zx,0)]$ are invertible matrices for all $\zx$. In this case
the `pseudo-Riemannian metric' on the vector bundle $E(N_0,N)$ reads
$$g^F(\zx)=h_{ij}(\zx,0)\,[\xd z^i]\ot[\xd z^j]\,,$$
where $[\xd z^i]$ is the section of the vector  bundle $E^*(N_0,N)$, dual to the normal bundle $E(N_0,N)$, represented by $\xd z^i$.
\end{theorem}
\noindent A more detailed local description of `classical' contrast functions is the following.
\begin{theorem} In the coordinates $(x^i,y^j)$ on $M\ti M$, any contrast function $F$ in a neighbourhood of $(p,p)$ can be written as
\bea\label{lfc} &F(x,y)=\frac{1}{2}\,(x^i-y^i)(x^j-y^j)\,g^F_{ij}(0)+x^i\,x^j\,y^k\,\zg_{ijk}(x,y)+
y^i\,y^j\,x^k\,\zvy_{ijk}(x,y)\\
&+x^i\,x^j\,x^k\,a_{ijk}(x,y)+y^i\,y^j\,y^k\,b_{ijk}(x,y)\,,\nn
\eea
where $\zg_{ijk}$ and $\zvy_{ijk}$ are symmetric with respect to the first two indices. Moreover, $t_{ijk}=\zg_{ijk}-\zvy_{ijk}$ and $a_{ijk},b_{ijk}$ are totally symmetric.
\end{theorem}
\begin{proof}
 Since $h_{ij}-h_{ij}(p,p)$ vanishes at $(p,p)$, using original coordinates $(x^i, y^j)$ we can write (cf. (\ref{kjet}))
$$h_{ij}(x,y)=g^F_{ij}(0,0)+2\,\za_{ijk}(x,y)\,x^k+2\,\zb_{ijk}(x,y)\,y^k\,,$$
so that (cf. (\ref{contrast}))
$$F(x,y)=(x^i-y^i)(x^j-y^j)\left(\frac{1}{2}\,g^F_{ij}(0,0)+\za_{ijk}(x,y)\,x^k
+\zb_{ijk}(x,y)\,y^k\right)\,.$$
Since $h_{ij}=h_{ji}$, we have $\za_{ijk}=\za_{jik}$ and $\zb_{ijk}=\zb_{jik}$,
so finally
\beas &F(x,y)=\frac{1}{2}\,(x^i-y^i)(x^j-y^j)\,g^F_{ij}(0)+x^i\,x^j\,y^k\,\zg_{ijk}(x,y)+
y^i\,y^j\,x^k\,\zvy_{ijk}(x,y)\\
&+x^i\,x^j\,x^k\,a_{ijk}(x,y)+y^i\,y^j\,y^k\,b_{ijk}(x,y)\,,
\eeas
where
$$\zg_{ijk}=\zb_{ijk}-\left(\za_{ikj}+\za_{jki}\right)\,,\quad \zvy_{ijk}=\za_{ijk}-\left(\zb_{ikj}+\zb_{jki}\right)\,,$$
and
$$a_{ijk}=\za_{ijk}+(cycl)\,,\quad b_{ijk}=\zb_{ijk}+(cycl)\,,
$$
where `(cycl)' denotes the cyclic permutations of $(ijk)$, and (\ref{lfc}) follows. It is obvious that
$\zg_{ijk},\zvy_{ijk}$ are symmetric with respect to the first two indices and that $a_{ijk},b_{ijk}$ are totally symmetric. Moreover,
$$t_{ijk}=\zvy_{ijk}-\zg_{ijk}=\left(\za_{ikj}+\za_{jki}+\za_{ijk}\right)-\left(\zb_{ijk}+\zb_{ikj}
+\zb_{jki}\right)$$
is totally symmetric.

\end{proof}
\begin{remark}
As follows from the proof, the functions $\zg_{ijk},\zvy_{ijk},a_{ijk},b_{ijk}$ in (\ref{lfc}) cannot be arbitrary.
\end{remark}
\noindent
It is easy to see now that
\bea &F^*(x,y)=\frac{1}{2}\,(x^i-y^i)(x^j-y^j)\,g^F_{ij}(0)+x^i\,x^j\,y^k\cdot\zvy_{ijk}(y,x)+
y^i\,y^j\,x^k\cdot\zg_{ijk}(y,x)\nonumber\\
&+x^i\,x^j\,x^k\cdot b_{ijk}(y,x)+y^i\,y^j\,y^k\cdot a_{ijk}(y,x)\,,\label{lfc1}
\eea
so that $(F-F^*)(x,y)$ reads
\bea
&(F-F^*)(x,y)=x^i\,x^j\,y^k\cdot\left(\zg_{ijk}(x,y)-\zvy_{ijk}(y,x)\right) \label{lfc2}\\
&+y^i\,y^j\,x^k\cdot\left(\zvy_{ijk}(x,y)-\zg_{ijk}(y,x)\right)
+x^i\,x^j\,x^k\cdot\left(a_{ijk}(x,y)-b_{ijk}(y,x)\right) \nonumber\\
&+y^i\,y^j\,y^k\cdot\left(b_{ijk}(x,y)-a_{ijk}(y,x)\right)\,.\nonumber
\eea
\noindent
We have already discussed a relation between the contrast function and the metric on a statistical manifold. Now, we pass to the rest of the structure. Let $F$ be a contrast function on a manifold $M$, and $g^F$ be the pseudo-Riemannian metric induced by $F$. The induced connection $\nabla^F$ and the skewness tensor $T^F$ are determined by the formulae
$$
g^F(\nabla^F_X\,Y,Z)(x)=-F[XY|Z](x,x)\,,\quad T^F(X,Y,Z)(x)=(F-F^*)[X|YZ](x,x)\,.
$$
Note that the formula for $T^F$ makes sense, since the second jets of $F-F^*$ vanish on the diagonal (cf. Remark \ref{rem}). The connection $\left(\nabla^F\right)^*$ dual to $\nabla^F$ can be obtained from
$$g^F(\left(\nabla^F\right)^*_X\,Y,Z)(x)=-F[Z|XY](x,x)=-F^*[XY|Z](x,x)\,.$$
It is easy to see that $\left(\nabla^F\right)^*=\nabla^{F^*}$.
\begin{theorem}
The connections $\nabla^F$ and $\left(\nabla^{F}\right)^*=\nabla^{F^*}$ are torsionless, and the pair $(g^F,\nabla)$ is Codazzi coupled.
\end{theorem}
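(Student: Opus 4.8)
The plan is to read off all three statements from the defining relations $g^F(\nabla^F_XY,Z)(x)=-F[XY|Z](x,x)$ and $g^F((\nabla^F)^*_XY,Z)(x)=-F^*[XY|Z](x,x)$, using the non-degeneracy of $g^F$ to promote any identity $g^F(A,Z)=g^F(B,Z)$ valid for all $Z$ to $A=B$. I would begin with torsion-freeness of $\nabla^F$, for which it suffices to check $g^F(\nabla^F_XY-\nabla^F_YX,Z)=g^F([X,Y],Z)$; the left-hand side equals $-F[XY|Z]+F[YX|Z]$ on $\zD_M$, and writing $G={}^2\!Z(F)$ we have $F[XY|Z]-F[YX|Z]=[{}^1\!X,{}^1\!Y](G)$. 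Because the lift to the first factor is a homomorphism of Lie algebras, $[{}^1\!X,{}^1\!Y]={}^1\![X,Y]$, so $F[XY|Z]-F[YX|Z]=F[[X,Y]|Z]$ identically on $M\ti M$; restricting to $\zD_M$ gives $g^F(\nabla^F_XY-\nabla^F_YX,Z)=-F[[X,Y]|Z]=g^F([X,Y],Z)$ and hence $\Tor^{\nabla^F}=0$. Torsion-freeness of $(\nabla^F)^*=\nabla^{F^*}$ then comes for free, since $F^*$ is again a contrast function with $g^{F^*}=g^F$ and the same computation applies with $F$ replaced by $F^*$.

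For the Codazzi coupling I would rewrite $\nabla^Fg^F$ by means of the dual connection: inserting the defining identity $X\,g^F(Y,Z)-g^F(Y,\nabla^F_XZ)=g^F((\nabla^F)^*_XY,Z)$ into the expansion of $(\nabla^F_Xg^F)(Y,Z)$ gives $(\nabla^F_Xg^F)(Y,Z)=g^F((\nabla^F)^*_XY,Z)-g^F(\nabla^F_XY,Z)=(F-F^*)[XY|Z]$. The key point is that $F-F^*$ has vanishing second jet along $\zD_M$: the quadratic (metric) parts of $F$ and $F^*$ coincide because $g^{F^*}=g^F$ and hence cancel, as is visible directly in the local normal form (\ref{lfc2}). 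Remark \ref{rem}, applied to $F-F^*$ with $k=2$, then ensures that ${}^1\!X\,{}^1\!Y\,{}^2\!Z(F-F^*)$ evaluated on $\zD_M$ is symmetric under every permutation of the three lifted fields; interchanging the first-factor fields ${}^1\!X$ and ${}^1\!Y$ yields $(\nabla^F_Xg^F)(Y,Z)=(\nabla^F_Yg^F)(X,Z)$, which is precisely the assertion that $(g^F,\nabla^F)$ is Codazzi coupled.

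I expect the one genuinely delicate step to be the justification that $F-F^*$ has vanishing second jet and that Remark \ref{rem} therefore applies at points of $\zD_M$ on the product $M\ti M$; this is exactly where the cancellation of the metric terms exhibited in (\ref{lfc2}) is needed. Everything else is formal: the torsion identities rest solely on the homomorphism property of the first-factor lift, and once the permutation symmetry of $(F-F^*)[XY|Z]$ is secured the Codazzi identity follows immediately.
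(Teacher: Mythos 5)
Your argument is correct. Note that the paper prints no proof of this theorem at all: the justification it offers is the coordinate computation of Theorem \ref{th1}, where the normal form (\ref{lfc}) gives $(\zG^F)^l_{ij}(x)=-\zg_{ijk}(x,x)(g^F)^{kl}(x)$ with $\zg_{ijk}$ symmetric in $i,j$ (and $\zvy_{ijk}$ for $\nabla^{F^*}$), whence torsion-freeness, and where the total symmetry of $t_{ijk}$ gives the Codazzi property via $(\nabla^F_X g^F)(Y,Z)=T^F(X,Y,Z)$. Your proof is the intrinsic, coordinate-free version of the same two facts: torsion-freeness from $[\,^1\!X,{}^1\!Y]={}^1[X,Y]$ together with the commutation of first- and second-factor lifts and the defining relations for $g^F$ and $\nabla^F$; the Codazzi identity from the symmetry of the $(0,3)$-tensor induced by $F-F^*$ via Remark \ref{rem}. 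What your route buys is independence from the normal-form bookkeeping of Theorem \ref{localform}; what it costs is that you must justify, at \emph{every} point of $\zD_M$, that the second jet of $F-F^*$ vanishes — you correctly flag this as the delicate step, but be aware that (\ref{lfc2}) as written only exhibits cubic vanishing at the single centre $(0,0)=(p,p)$. This is easily repaired either by observing that $p$ was arbitrary, or more cleanly by using the coordinates $u=x-y$, $v=x+y$ of the proof of Theorem \ref{thcontrast}, in which $F-F^*=\frac{1}{2}\,u^iu^j\big(h_{ij}(u,v)-h_{ij}(-u,v)\big)$ is manifestly of third order in $u$ uniformly along $\{u=0\}$. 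Since the paper itself asserts this jet condition just before defining $T^F$, your appeal to it is legitimate, and with that point settled the proof is complete.
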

\begin{corollary}
If $F$ is a contrast function on $M$, then $(M,g^F,T^F)$, $(M,g^F,\nabla^F)$, and $(M,g^F,\nabla^F, \left(\nabla^F\right)^*)$
are different presentations of the same statistical manifold.
\end{corollary}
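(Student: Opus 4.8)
The plan is to verify the three assertions---torsion-freeness of $\nabla^F$, of $(\nabla^F)^*$, and the Codazzi coupling of $(g^F,\nabla^F)$---by reducing everything to two structural facts about the lifts $^1\!X$ and $^2\!X$. First, $W\mapsto{}^1\!W$ and $W\mapsto{}^2\!W$ are morphisms of Lie algebras, so that $[^1\!X,{}^1\!Y]={}^1\![X,Y]$ and $[^2\!X,{}^2\!Y]={}^2\![X,Y]$, while the two families commute. Second, by Remark \ref{rem} an expression $Z_1\cdots Z_m(\Phi)(p)$ for a function $\Phi$ with vanishing $(m-1)$-jet at $p$ is tensorial and totally symmetric in the $Z_i$. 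Recall also from the preceding discussion that $g^F(W,Z)(x)=-{}^1\!W\,{}^2\!Z(F)(x,x)$ is a well-defined (symmetric, non-degenerate) tensor precisely because the first jet of $F$ vanishes on $\zD_M$.

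First I would treat the torsion of $\nabla^F$. Starting from $g^F(\nabla^F_XY,Z)(x)=-F[XY|Z](x,x)=-{}^1\!X\,{}^1\!Y\,{}^2\!Z(F)(x,x)$, antisymmetrizing in $X,Y$ gives $g^F(\nabla^F_XY-\nabla^F_YX,Z)(x)=-[^1\!X,{}^1\!Y]\,{}^2\!Z(F)(x,x)=-{}^1\![X,Y]\,{}^2\!Z(F)(x,x)$. Since $g^F(W,Z)(x)=-{}^1\!W\,{}^2\!Z(F)(x,x)$, the right-hand side is exactly $g^F([X,Y],Z)(x)$, whence $g^F(\Tor^{\nabla^F}(X,Y),Z)=0$ for all $Z$; non-degeneracy of $g^F$ forces the torsion to vanish. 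The identical computation carried out with $^2\!X,{}^2\!Y$ (or, equivalently, the already noted identity $(\nabla^F)^*=\nabla^{F^*}$ together with the fact that $F^*$ is again a contrast function) yields torsion-freeness of $(\nabla^F)^*$.

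For the Codazzi coupling I would compute $\nabla^Fg^F$ directly. Using the defining relation of the dual connection one has $(\nabla^F_Xg^F)(Y,Z)=g^F((\nabla^F)^*_XY,Z)-g^F(\nabla^F_XY,Z)=-F[Z|XY](x,x)+F[XY|Z](x,x)$. Because $F[Z|XY](x,x)=F^*[XY|Z](x,x)$, this equals $(F-F^*)[XY|Z](x,x)={}^1\!X\,{}^1\!Y\,{}^2\!Z(F-F^*)(x,x)$. Now $F-F^*$ has vanishing second jet along $\zD_M$: its second-order Taylor part is the symmetric form $\tfrac12(x^i-y^i)(x^j-y^j)g^F_{ij}(0)$ for both $F$ and $F^*$, as read off from Theorem \ref{localform}. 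Hence, by Remark \ref{rem}, the trilinear expression ${}^1\!X\,{}^1\!Y\,{}^2\!Z(F-F^*)(x,x)$ is tensorial and symmetric in its three entries, so in particular unchanged under $X\leftrightarrow Y$. Therefore $(\nabla^F_Xg^F)(Y,Z)=(\nabla^F_Yg^F)(X,Z)$, i.e. $\nabla^Fg^F$ is symmetric and $(g^F,\nabla^F)$ is Codazzi coupled.

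The only genuinely delicate point is the justification that the relevant derivative expressions are tensorial, and this is exactly where the jet-vanishing hypotheses enter: the first-jet vanishing of $F$ makes $g^F(W,Z)(x)=-{}^1\!W\,{}^2\!Z(F)(x,x)$ a bona fide tensor, used throughout the torsion argument, while the stronger second-jet vanishing of $F-F^*$ on $\zD_M$ is what legitimizes the symmetry step in the Codazzi computation. Everything else is bookkeeping of the Lie-algebra-homomorphism property of the lifts, with no substantial calculation.
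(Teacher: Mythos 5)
Your argument is correct, but it takes a genuinely different route from the paper. The paper states the preceding theorem (torsion-freeness of $\nabla^F$ and $\left(\nabla^F\right)^*$ and the Codazzi coupling) and this corollary without proof, and effectively defers the justification to the subsequent local computation of Theorem \ref{th1}: there the normal forms of Theorems \ref{thcontrast} and \ref{localform} are used to read off that the Christoffel symbols $(\zG^F)^l_{ij}=-\zg_{ijk}(x,x)(g^F)^{kl}(x)$ are symmetric in $i,j$ (hence no torsion) and that $T^F_{ijk}=t_{ijk}(x,x)$ is totally symmetric, after which the equivalences of Theorem \ref{statm} give the corollary. You instead argue intrinsically: torsion-freeness falls out of the fact that $W\mapsto{}^1\!W$ is a morphism of Lie algebras together with non-degeneracy of $g^F$, and the Codazzi identity follows from the vanishing of the second jet of $F-F^*$ on $\zD_M$ via the tensoriality-and-symmetry statement of Remark \ref{rem}. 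Your approach buys coordinate independence and makes transparent exactly which jet-vanishing hypothesis powers which conclusion (first jet of $F$ for the metric and the torsion computation, second jet of $F-F^*$ for the Codazzi symmetry); it also shows in passing that $\nabla^F g^F=(F-F^*)[XY|Z]=(F-F^*)[X|YZ]=T^F$, which is what identifies the three presentations as the same statistical manifold. The paper's route buys explicit formulas for the coefficients, which it needs later anyway. One small point worth making explicit: Theorem \ref{localform} is stated at a fixed base point $(p,p)=(0,0)$, so the vanishing of the second jet of $F-F^*$ along all of $\zD_M$ requires the observation that $p$ was arbitrary (or the direct computation that all second partials of $F$ and $F^*$ agree on the diagonal); the paper asserts this fact in the sentence introducing $T^F$, and your proof should lean on that rather than on the Taylor expansion at the single point $(0,0)$.
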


\medskip\noindent Writing $\nabla^F$, $\left(\nabla^F\right)^*=\nabla^{F^*}\!\!$, and $T^F$ in local coordinates of Theorem \ref{thcontrast}, we get

$$
\nabla^F_{\pa_{x^i}}\pa_{x^j}=(\zG^F)_{ij}^l(x)\pa_{x^l}\,,\quad \left(\nabla^F\right)^*_{\pa_{x^i}}\pa_{x^j}=(\zG^{F^*})_{ij}^l(x)\pa_{x^l}\,,\quad T^F(\pa_{x^i},\pa_{x^j},\pa_{x^k})=T_{ijk}(x)\,,
$$

\medskip\noindent
so using the local form (\ref{lfc}) of $F$, the local form (\ref{lfc1}) of $F^*$, and the local form (\ref{lfc2}) of $(F-F^*)$, we get the following.
\begin{theorem} The local coefficients of $\nabla^F$ and $T^F$ read
\beas\label{T}
(\zG^F)^l_{ij}(x)&=&g^F\left(\nabla^F_{\pa_{x^i}}\pa_{x^j},\pa_{x^k}\right)(x)\cdot (g^F)^{kl}(x)
=-\big[\pa_{x^i}\,\pa_{x^j}\,\pa_{y^k}(F)(x,x)\big]\cdot(g^F)^{kl}(x)\\
&=&-\zg_{ijk}(x,x)\cdot (g^F)^{kl}(x);\\                      \\
(\zG^{F^*})^l_{ij}(x)&=&g^F\left(\nabla^{F^*}_{\pa_{x^i}}\pa_{x^j},\pa_{x^k}\right)(x)
\cdot(g^{F})^{kl}(x)=-[\pa_{x^i}\,\pa_{x^j}\,\pa_{y^k}(F^*)(x,x)]\cdot(g^F)^{kl}(x)\\
&=&-\zvy_{ijk}(x,x)\cdot(g^F)^{kl}(x);\\                      \\
T^F_{ijk}(x)&=&\pa_{x^i}\,\pa_{y^j}\,\pa_{y^k}(F-F^*)(x,x)=\zvy_{kji}(x,x)-\zg_{kji}(x,x)
=t_{ijk}(x,x)\,.
\eeas
\end{theorem}
\noindent It is easy to see that $\nabla^F$ is symmetric (torsionless) and $T^F$ is totally symmetric.
\begin{remark}
Statistical manifolds admitting torsion are induced from so called \emph{pre-contrast functions} \cite{Henmi2011a,Matsuzoe2010}, but we will not consider pre-contrast functions in this paper.
\end{remark}

\section{Lifting geometrical structures to the tangent bundle}\label{sec:4}
Let $M$ be a manifold, and
$$\T(M)=\bigoplus_{p,q= 0}^\infty\T^q_p(M)$$
be the algebra of tensor fields on $M$, elements of $\T ^q_p(M)$ being the $q$-contravariant and $p$-covariant tensor fields on $M$. There is a canonical injective homomorphism of the algebra $\T(M)$ into into the algebra $\T(\sT M)$ of tensor fields on the tangent bundle $\sT M$,
$$
\Vt:\T(M)\to \T(\sT M)\,,\quad K\mapsto K^v\,.
$$
The tensor $K^v$ is called the \emph{vertical lift} of $K$. Since $\Vt$ is an algebra homomorphism,
$(K\ot S)^v=K^v\ot S^v$, it is enough to define the vertical lifts of functions, 1-forms, and vector fields.
In local coordinates $(x^i)$ on $M$ and the adapted coordinates $(x^i,\dot x^j)$ on $\sT M$ the lifts read
\beas f^v(x,\dot x)&=&f(x)\,,\\
(f_i(x)\xd x^i)^v&=&f_i(x)\xd x^i\,,\\
(X_i(x)\,\pa_{x^i})^v&=&X_i(x)\pa_{\dot x^i}\,.
\eeas
There is another lift of tensor fields to the tangent bundle \cite{Grabowski1995,Yano1967,Yano1973}
$$
\dt:\T(M)\to \T(\sT M)\,,\quad K\mapsto K^c\,,
$$
called the \emph{tangent} or \emph{complete lift}. In this case, $\dt$ is a $\Vt$-derivation, i.e.
\be\label{der}\left(K\ot S\right)^c=K^c\ot S^v+K^v\ot S^c\,.\ee
Again, due to (\ref{der}), it is enough to define the complete lifts of functions, 1-forms, and vector fields:
\beas f^c(x,\dot x)&=&\frac{\pa f}{\pa x^i}(x)\,\dot x^i,\\
 \left(f_i(x)\,\xd x^i\right)^c&=& \frac{\pa f_i}{\pa x^j}(x)\,\dot x^j\,\xd x^i+f_i(x)\,\xd \dot x^i\,,\\
\left(X_i(x)\,\pa_{x^i}\right)^c&=&\frac{\pa X_i}{\pa x^j}(x)\,\dot x^j\,\pa_{\dot x^i}+X_i(x)\,\pa_{x^i}\,.
\eeas
In particular, the complete lift of a 2-covariant tensor $g=g_{ij}(x)\,\xd x^i\ot\xd x^j$ is
\be\label{g-lift}
\left(g_{ij}(x)\,\xd x^i\ot\xd x^j\right)^c=\frac{\pa g_{ij}}{\pa x^k}(x)\, \dot x^k\, \xd x^i\ot\xd x^j+g_{ij}(x)\left(\xd\dot x^i\ot\xd x^j+\xd x^i\ot\xd\dot x^j\right)\,.
\ee
If $g$ is symmetric (anti-symmetric), then $g^c$ is symmetric (anti-symmetric). If $g$ is non-degenerate, then $g^c$ is non-degenerate. However, if $g$ is a Riemannian metric, then $g^c$ is never Riemannian, but pseudo-Riemannian of index 0.
\begin{proposition} (\cite{Yano1967}) If $K\in\T^q_p(M)$, then
$$K^c(X_1^c,\dots,X_p^c)=\left(K(X_1,\dots,X_p)\right)^c\,,$$
for any vector fields $X_1,\dots,X_p$ on $M$.
\end{proposition}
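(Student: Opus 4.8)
The plan is to reduce the identity to the case of \emph{decomposable} tensor fields and to the elementary pairing of a single $1$-form against a single vector field, and then to reassemble everything using the derivation rule (\ref{der}). Both sides of the asserted equation are $\R$-linear in $K$, and the complete lift is a local operator; since, in a coordinate chart, every $K\in\mathscr{T}^q_p(M)$ is a finite $C^\infty(M)$-combination of the coordinate tensors $\pa_{x^{i_1}}\ot\cdots\ot\pa_{x^{i_q}}\ot\xd x^{j_1}\ot\cdots\ot\xd x^{j_p}$ (with each coefficient function absorbed into the first factor, so that the summands are again decomposable), it suffices to prove the formula when $K=Y_1\ot\cdots\ot Y_q\ot\zw_1\ot\cdots\ot\zw_p$ for vector fields $Y_a$ and $1$-forms $\zw_b$ on $M$.

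First I would record the base cases: for a $1$-form $\zw$ and a vector field $X$ on $M$ one has, by a direct computation from the explicit coordinate formulae for the two lifts,
$$\zw^c(X^c)=\big(\zw(X)\big)^c\,,\qquad \zw^v(X^c)=\big(\zw(X)\big)^v\,,$$
together with $f^v=f$ for a function $f$ regarded on $\sT M$, and the Leibniz rule obeyed by $\dt$ on functions, namely $(f_1\cdots f_p)^c=\sum_{a}f_a^c\prod_{b\neq a}f_b$. These are the only genuinely computational ingredients, and each is a one-line check in the adapted coordinates $(x^i,\dot x^j)$. Note that the first identity is precisely the proposition in its simplest instance $p=1$, $q=0$.

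Next, iterating the derivation property (\ref{der}) gives, for any decomposable $K=K_1\ot\cdots\ot K_m$ whose factors are functions, $1$-forms, or vector fields,
$$K^c=\sum_{i=1}^{m} K_1^v\ot\cdots\ot K_i^c\ot\cdots\ot K_m^v\,,$$
so exactly one factor carries the complete lift while the others carry the vertical lift. Applying this to $K=Y_1\ot\cdots\ot Y_q\ot\zw_1\ot\cdots\ot\zw_p$ and evaluating on $(X_1^c,\dots,X_p^c)$, only the covariant slots get contracted; by the base cases, each contraction $\zw_b^v(X_b^c)$ produces $(\zw_b(X_b))^v$ and the unique contraction of the form $\zw_a^c(X_a^c)$ produces $(\zw_a(X_a))^c$. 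Writing $f_b:=\zw_b(X_b)$, the terms in which the complete lift sits on a contravariant factor $Y_a$ contribute $\prod_b f_b^v\cdot\big(Y_1^v\ot\cdots\ot Y_a^c\ot\cdots\ot Y_q^v\big)$, while the terms in which it sits on a covariant factor $\zw_a$ contribute $f_a^c\prod_{b\neq a}f_b^v\cdot\big(Y_1^v\ot\cdots\ot Y_q^v\big)$.

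Finally I would expand the right-hand side. Since $K(X_1,\dots,X_p)=(f_1\cdots f_p)\,Y_1\ot\cdots\ot Y_q$, a further application of (\ref{der}), treating the scalar $f_1\cdots f_p$ as a $(0,0)$-factor, gives
$$\big(K(X_1,\dots,X_p)\big)^c=(f_1\cdots f_p)^c\,Y_1^v\ot\cdots\ot Y_q^v+\sum_{a=1}^{q}(f_1\cdots f_p)^v\,Y_1^v\ot\cdots\ot Y_a^c\ot\cdots\ot Y_q^v\,,$$
and substituting the function Leibniz rule $(f_1\cdots f_p)^c=\sum_a f_a^c\prod_{b\neq a}f_b$ together with $f^v=f$ shows that this coincides term by term with the expression obtained above for $K^c(X_1^c,\dots,X_p^c)$. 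The one point requiring care is purely bookkeeping: tracking, in the iterated derivation rule, whether the unique complete-lifted factor is covariant (feeding the scalar through $\zw_a^c(X_a^c)$) or contravariant (feeding the output tensor through $Y_a^c$), and verifying that $f^v=f$ makes the two expansions agree. No analytic difficulty arises beyond the base-case coordinate identities.
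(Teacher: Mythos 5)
Your proof is correct. The paper itself offers no proof of this proposition --- it is imported verbatim from Yano--Ishihara \cite{Yano1967} --- so there is no internal argument to compare yours against; what you give is the standard derivation-property proof, and it is sound. The reduction to decomposable tensors via $\R$-linearity and locality is legitimate (the coefficient function can be absorbed into one factor whenever $p+q\ge 1$, and for $p=0$ the statement is vacuous), the two base identities $\zw^c(X^c)=(\zw(X))^c$ and $\zw^v(X^c)=(\zw(X))^v$ are precisely the computations needed and both check out in the adapted coordinates $(x^i,\dot x^j)$, and the iterated form of (\ref{der}) --- exactly one complete-lifted factor per summand --- combined with $(f_1\cdots f_p)^c=\sum_a f_a^c\prod_{b\ne a}f_b^v$ makes the two expansions match term by term. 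Two cosmetic points only: you write $\prod_{b\ne a}f_b$ where $\prod_{b\ne a}f_b^v$ is meant (harmless, since $f^v=f\circ\pi$ for functions), and it is worth stating explicitly that a function factor in a tensor product is just scalar multiplication, so that applying (\ref{der}) with the scalar $f_1\cdots f_p$ as a $(0,0)$-factor is covered by the paper's conventions on the full tensor algebra $\mathscr{T}(M)$.
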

\begin{remark}
In \cite{Grabowski1997,Grabowski1999} there is defined a lift $X\mapsto \hat X$ of sections of a Lie algebroid $\zt:E\to M$ into the space of vector fields on $E$. It is a homomorphism of Lie brackets, $\widehat{[X,Y]_E}=[\hat X,\hat Y]$. Here, the bracket on the left hand side is the Lie algebroid bracket of sections $X,Y\in\Sec(E)$, while the bracket on the right hand side is the Lie bracket of vector fields. This lift completely characterizes the Lie algebroid and can be used in a formulation of geometric mechanics on Lie algebroids \cite{Grabowska:2008,Grabowska:2011,Grabowska:2006}.
\end{remark}
\noindent There exist not only complete lifts of tensor fields, but also naturally defined complete lifts of affine connections.
\begin{theorem} (\cite{Yano1967})
Let $\nabla$ be a connection on a manifold $M$. Then there exists a unique connection $\nabla^c$ on $\sT M$ such that
$$\nabla^c_{X^c}(Y^{c})=\left(\nabla_XY\right)^{c}$$
for any vector fields $X,Y$ on $M$. Moreover, for any tensor field $K$ on $M$, we have
$$\nabla^c_{X^c}(K^c)=\left(\nabla_X\,K\right)^c\quad\text{and}\quad\nabla^c(K^c)=\left(\nabla K\right)^c\,.$$
If $\Tor^\nabla$ and $R^\nabla$ denote the torsion and the curvature of $\nabla$, then $$\Tor^{\nabla^c}=\left(\Tor^\nabla\right)^c\quad\text{and}\quad R^{\nabla^c}=\left(R^\nabla\right)^c\,.$$
\end{theorem}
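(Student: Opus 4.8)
The plan is to reduce the entire theorem to a single \emph{determination principle}: a covariant tensor field $S$ on $\sT M$ that vanishes on every tuple of complete lifts, $S(X_1^c,\dots,X_r^c)=0$ for all vector fields $X_1,\dots,X_r$ on $M$, must be identically zero. I would prove this in adapted coordinates $(x^i,\dot x^i)$, where $(\pa_{x^i})^c=\pa_{x^i}$ and $(\pa_{x^i})^v=\pa_{\dot x^i}$ give a local frame, while $(x^m\pa_{x^i})^c=\dot x^m\,\pa_{\dot x^i}+x^m\pa_{x^i}$. Substituting such complete lifts into $S$ and using $C^\infty(\sT M)$-multilinearity isolates each chosen vertical frame field with a coefficient $\dot x^m$; dividing by $\dot x^m$ on the dense set $\{\dot x\neq 0\}$ and extending by continuity shows that all components of $S$ vanish. \emph{Uniqueness} of $\nabla^c$ is then immediate, since the difference of two connections satisfying $\nabla^c_{X^c}Y^c=(\nabla_XY)^c$ is a $(1,2)$-tensor killed by every pair $(X^c,Y^c)$, hence zero.

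For \emph{existence} I would prescribe $\nabla^c$ on the generating fields by
$$\nabla^c_{X^c}Y^c=(\nabla_XY)^c,\quad \nabla^c_{X^c}Y^v=(\nabla_XY)^v,\quad \nabla^c_{X^v}Y^c=(\nabla_XY)^v,\quad \nabla^c_{X^v}Y^v=0,$$
and extend $C^\infty(\sT M)$-linearly in the lower slot and by the Leibniz rule in the upper slot. What must be checked is compatibility with the relations $(fX)^v=f^vX^v$ and $(fX)^c=f^cX^v+f^vX^c$ (special cases of (\ref{der})) together with the function identities $X^cf^c=(Xf)^c$, $X^cf^v=(Xf)^v=X^vf^c$ and $X^vf^v=0$, all of which fall out of the coordinate formulas listed above; a short bookkeeping verification then shows both sides of the linearity and Leibniz rules agree. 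Equivalently, and more cleanly for existence, one writes down the Christoffel symbols of $\nabla^c$ in adapted coordinates and checks the defining relation by direct computation, the four generator rules following from those symbols.

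To pass to arbitrary tensors I would record that $\nabla^c_{X^c}$ sends complete lifts to complete lifts and vertical lifts to vertical lifts of the corresponding $\nabla_X$-derivative. On functions this is $X^cf^c=(Xf)^c$ (resp. $X^cf^v=(Xf)^v$); on vector fields it is built into the prescription; on a $1$-form $\omega$ it follows by duality, pairing $\nabla^c_{X^c}\omega^c$ with an arbitrary $Z^c$, using the identity $\omega^c(Z^c)=(\omega(Z))^c$ and the Leibniz rule to obtain $\langle\nabla^c_{X^c}\omega^c,Z^c\rangle=((\nabla_X\omega)(Z))^c=(\nabla_X\omega)^c(Z^c)$, and then invoking the determination principle for $1$-forms. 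A general $K$ is a sum of tensor products of such factors, and since $\nabla^c_{X^c}$ is a tensor derivation while $\dt$ is a $\Vt$-derivation (\ref{der}), the two term-by-term expansions coincide, giving $\nabla^c_{X^c}(K^c)=(\nabla_XK)^c$. The full identity $\nabla^c(K^c)=(\nabla K)^c$ then follows by contracting both sides with complete lifts in every slot and appealing to the determination principle once more.

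Finally, the torsion and curvature identities come from evaluation on complete lifts together with $[X^c,Y^c]=[X,Y]^c$ (the bracket identity noted above, specialized to $E=\sT M$): one computes $\Tor^{\nabla^c}(X^c,Y^c)=(\nabla_XY-\nabla_YX-[X,Y])^c=(\Tor^\nabla(X,Y))^c=(\Tor^\nabla)^c(X^c,Y^c)$ and likewise $R^{\nabla^c}(X^c,Y^c)Z^c=(R^\nabla(X,Y)Z)^c=(R^\nabla)^c(X^c,Y^c,Z^c)$, so that in each case the two tensors agree on all complete lifts and the determination principle forces $\Tor^{\nabla^c}=(\Tor^\nabla)^c$ and $R^{\nabla^c}=(R^\nabla)^c$. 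I expect the main obstacle to be the well-definedness of the extension in the existence step---confirming that the four prescriptions are consistent with all relations among lifts---and, conceptually, a clean proof of the determination principle, since that one lemma drives uniqueness, the passage to arbitrary tensors, and the torsion and curvature statements alike.
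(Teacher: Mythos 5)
Your argument is essentially correct, but note that the paper offers no proof to compare it with: this theorem is quoted from Yano--Ishihara \cite{Yano1967}, and the only in-paper content is the table of Christoffel symbols $\tilde\zG$ of $\nabla^c$ in adapted coordinates that follows the statement. Your proof is the standard one from that reference, and it is fully consistent with what the paper does record: the four generator rules $\nabla^c_{X^c}Y^c=(\nabla_XY)^c$, $\nabla^c_{X^c}Y^v=\nabla^c_{X^v}Y^c=(\nabla_XY)^v$, $\nabla^c_{X^v}Y^v=0$ reproduce exactly the displayed $\tilde\zG^i_{jk}$, $\tilde\zG^{\bar i}_{j\bar k}$, $\tilde\zG^{\bar i}_{\bar jk}$, $\tilde\zG^{\bar i}_{jk}$ and the vanishing of the rest, so the Christoffel-symbol route you offer for existence is precisely what the paper implicitly relies on. Your ``determination principle'' is the right organizing lemma (equivalently: at every point of $\sT M$ off the zero section the values of complete lifts span the whole tangent space, and one concludes on the zero section by continuity); just be careful to phrase the coordinate computation at an arbitrary base point, e.g.\ by using $(x^m-c^m)\pa_{x^i}$ centered there, or by induction on the number of vertical slots, since the single-division sketch as written only cleanly kills one vertical slot at a time. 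The extension to arbitrary tensors via the observation that $\nabla^c_{X^c}$ intertwines the $\Vt$-homomorphism and the $\dt$-derivation property (\ref{der}), and the torsion/curvature identities via $[X^c,Y^c]=[X,Y]^c$ plus the determination principle, are both sound. The one step you rightly flag as needing real work --- well-definedness of the extension from the four generator rules --- is genuinely the crux, and your fallback of verifying the defining relation directly from the Christoffel symbols settles it.
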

\begin{corollary}
If $\nabla^g$ is the metric (Levi-Civita) connection for a pseudo-Riemannian metric $g$, then $\left(\nabla^g\right)^c$
is the metric connection for the pseudo-Riemannian metric $g^c$.
\end{corollary}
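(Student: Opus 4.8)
The plan is to identify $\left(\nabla^g\right)^c$ with the Levi-Civita connection $\nabla^{g^c}$ of $g^c$ by invoking the uniqueness of the latter. Recall that on a pseudo-Riemannian manifold the metric connection is the \emph{unique} connection which is simultaneously torsion-free and metric-compatible, and (as already noted in the excerpt) this Koszul characterization remains valid in the pseudo-Riemannian case. Since the discussion preceding formula (\ref{g-lift}) guarantees that $g^c$ is again a symmetric non-degenerate tensor, hence a genuine pseudo-Riemannian metric, whenever $g$ is, it suffices to verify that $\left(\nabla^g\right)^c$ enjoys these two defining properties, and then conclude by uniqueness.

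First I would check that $\left(\nabla^g\right)^c$ is torsion-free. The metric connection $\nabla^g$ has $\Tor^{\nabla^g}=0$, so by the preceding theorem of Yano one has $\Tor^{\left(\nabla^g\right)^c}=\left(\Tor^{\nabla^g}\right)^c$; since the complete lift of the zero tensor is the zero tensor, this vanishes. Next I would check metric-compatibility, namely $\left(\nabla^g\right)^c(g^c)=0$. Applying the same theorem with $K=g$ and $\nabla=\nabla^g$ gives the identity $\nabla^c(K^c)=\left(\nabla K\right)^c$, that is $\left(\nabla^g\right)^c(g^c)=\left(\nabla^g g\right)^c$. But $\nabla^g g=0$ by definition of the Levi-Civita connection, and once more the complete lift of zero is zero, so $\left(\nabla^g\right)^c(g^c)=0$.

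Combining the two observations, $\left(\nabla^g\right)^c$ is a torsion-free, $g^c$-compatible connection on $\sT M$, and by the uniqueness of the metric connection it must coincide with $\nabla^{g^c}$, which is the assertion. I do not expect any serious obstacle: the whole argument is simply the transport of the two characterizing properties of the Levi-Civita connection through the complete-lift functor, using that this functor sends the zero tensor to the zero tensor and intertwines covariant differentiation as stated in the theorem. The only points demanding care are that $g^c$ is indeed non-degenerate (already established) and that uniqueness of the torsion-free metric connection holds for pseudo-Riemannian metrics, which is classical.
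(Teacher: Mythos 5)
Your proof is correct and follows essentially the same route as the paper: the corollary is obtained from the uniqueness of the torsion-free metric connection together with the facts $\Tor^{\nabla^c}=(\Tor^\nabla)^c$ and $\nabla^c(K^c)=(\nabla K)^c$ from the preceding theorem (the paper's own justification, given for the higher-order version in the proof of its lifting theorem, phrases metric-compatibility as self-duality of the connection, but that is trivially equivalent to your $\nabla^g g=0$ condition). No gaps; the two points you flag as needing care --- non-degeneracy of $g^c$ and pseudo-Riemannian uniqueness of the Levi-Civita connection --- are indeed the only hypotheses to check, and both are available in the text.
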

\noindent Let $\zG^i_{jk}$ be the connection components for $\nabla$ with respect to a local coordinate system $(x^i)$ on $M$. Then the coordinate components of $\nabla^c$ with respect to the induced coordinate system $(x^i,\dot x^j)$ on $\sT M$ are
\beas &\tilde\zG^i_{jk}=\zG^i_{jk}\,,\quad\tilde\zG^{\bar i}_{j\bar k}=\zG^i_{jk}\,,\quad
\tilde\zG^{\bar i}_{\bar jk}=\zG^i_{jk}\,,\quad\tilde\zG^{\bar i}_{jk}=\frac{\pa \zG^i_{jk}}{\pa x^l}\,\dot x^l\,,\\
&\tilde\zG^{i}_{j\bar k}=0\,,\quad\tilde\zG^{i}_{\bar jk}=0\,,\quad\tilde\zG^{i}_{\bar j\bar k}=0\,.
\eeas
Here the indices with bar refer to $(\dot x^i)$.
\section{Higher lifts}\label{s6}
The tangent lifts described in the previous section have been generalized to lifts from $M$ to higher tangent bundles $\sT^rM$ by Morimoto \cite{Morimoto1970}. Let us recall that $\sT^rM$ is the bundle of $r$-jets of smooth curves
$$\zg:\R\to M\,,\quad\R\ni t\mapsto\zg(t)\in M\,,$$
at $t=0$. Such jets are classes $[\zg]_r$ of curves $\zg$ which at $0\in\R$ have contact of the order $r$, i.e. have the same position, velocity, acceleration, etc. More precisely, the two curves $\gamma_1$ and $\gamma_2$ are equivalent if, for all smooth functions $f$ on $M$,
$$\frac{\xd^\zl (f\circ\zg_1)}{\xd\, t^\zl}(0)=\frac{\xd^\zl (f\circ\zg_2)}{\xd\, t^\zl}(0)\,,\ \zl=0,1,\dots,r\,.$$
Coordinates $(x^i)$ on $M$ induce coordinates $(x^i_{\zl})$ on $\sT^rM$, where $\zl=0,1,\dots,r$, as follows. When considering jets of curves, one frequently uses the notation according to which coordinates $(x^i, \dot x^j, \ddot x^k,\dots)$ of $[\gamma]_r$ are defined by derivatives with respect to $t$, namely
$$x^i(\gamma(t))=x^i+ \dot x^i\, t+\frac{1}{2!}\ddot x^i\, t^2+\frac{1}{3!}\dddot x^i\, t^3+\dots +o(t^r)\,.$$
However, we prefer another convention and coordinates $(x^i_{\zl})$ defined by
$$x^i(\gamma(t))=x^i_0+ x^i_1\, t+ x^i_2\, t^2+\cdots +x^i_r\,t^r+ o(t^r)\,.$$
This definition of adapted coordinates in $\sT^rM$ is more convenient for the lifting procedures (see e.g. Proposition \ref{hl}).

\medskip\noindent
Let us fix $r\in\N$ for the rest of this section.
\begin{definition}[\cite{Morimoto1970}] Let $f\in C^\infty(M)$, and $\zl$ be a non-negative integer not bigger than $r$. Then the $\zl$-lift of $f$ is the function $L_\zl(f)=f^{(\zl)}$ on $\sT^rM$ defined by
\be\label{fl}f^{(\zl)}([\zg]_r)=\frac{1}{\zl!}\left[\frac{\xd^{\zl}(f\circ\zg)}{\xd\,t^{\zl}}\right]_{t=0}\,,\ee
for $[\zg]_r\in\sT^rM$, where $\zg:\R\to M$ is a smooth curve representing the jet. We put by convention $f^{(\zl)}=0$ for $\zl<0$.
\end{definition}
\begin{example} For $r=2$ we have
$$\begin{array}{l}\vspace{5pt}
\displaystyle f^{(0)}\left(x_0,x_1,x_2\right)=f(x_0); \\ \vspace{5pt}
\displaystyle f^{(1)}\left(x_0,x_1,x_2\right)=\frac{\partial f}{\partial x^i}(x_0)\, x^i_1;\\
\displaystyle f^{(2)}\left(x_0,x_1,x_2\right)=\frac{1}{2}\frac{\partial^2 f}{\partial x^i\partial x^j}(x_0)\, x^i_1\, x^j_1+\frac{\partial f }{\partial x^k}(x_0)\, x^k_2\,.
\end{array}$$
\end{example}

\medskip\noindent
The lift $f\mapsto f^{(\zl)}$ is linear and the following Leibniz rule
\be\label{Lr}(f\cdot g)^{(\zl)}=\sum_{\zm=0}^\zl f^{(\zm)}\cdot g^{(\zl-\zm)}\ee is satisfied
for all $f,g\in C^\infty(M)$. It is easy to see the following.
\begin{proposition}\label{hl} For local coordinates $(x^i)$ on $M$ we have $(x^i)^{(\zl)}=x^i_\zl$.
\end{proposition}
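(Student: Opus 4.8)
The plan is simply to unwind the two definitions and observe that they encode the very same Taylor coefficient. First I would recall the coordinate convention fixed above for $\sT^rM$: for a representative curve $\zg$ of the jet $[\zg]_r$, the value $x^i_\zl([\zg]_r)$ is the coefficient of $t^\zl$ in the expansion of $t\mapsto x^i(\zg(t))$ at $t=0$. By Taylor's formula this coefficient is
$$x^i_\zl([\zg]_r)=\frac{1}{\zl!}\left[\frac{\xd^\zl(x^i\circ\zg)}{\xd\,t^\zl}\right]_{t=0}\,,$$
since $x^i(\zg(t))=\sum_{\zl=0}^r\frac{1}{\zl!}\left[\frac{\xd^\zl(x^i\circ\zg)}{\xd\,t^\zl}\right]_{t=0}t^\zl+o(t^r)$ and one reads off the coefficients.

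Next I would apply the definition of the $\zl$-lift to the coordinate function $f=x^i\in C^\infty(M)$. By that definition,
$$(x^i)^{(\zl)}([\zg]_r)=\frac{1}{\zl!}\left[\frac{\xd^\zl(x^i\circ\zg)}{\xd\,t^\zl}\right]_{t=0}\,,$$
which is literally the same expression as above. Hence $(x^i)^{(\zl)}=x^i_\zl$, as claimed.

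The only point deserving a word of justification — and the only place where the jet structure of $\sT^rM$ rather than an arbitrary curve enters — is that both sides are genuinely well-defined functions on $\sT^rM$, i.e. they depend only on the jet $[\zg]_r$ and not on the chosen representative $\zg$. For $\zl\le r$ this is immediate from the equivalence relation defining $r$-jets: if $[\zg_1]_r=[\zg_2]_r$ then $\frac{\xd^\zl(f\circ\zg_1)}{\xd\,t^\zl}(0)=\frac{\xd^\zl(f\circ\zg_2)}{\xd\,t^\zl}(0)$ for all smooth $f$ and all $\zl\le r$, in particular for $f=x^i$. Thus the derivative appearing in both formulas is representative-independent and the identification is legitimate.

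There is essentially no obstacle here; the statement reduces to matching two definitions, and I would present it in a couple of lines. The only care needed is notational: the $\tfrac{1}{\zl!}$ normalization built into the definition of $f^{(\zl)}$ is exactly what makes $(x^i)^{(\zl)}$ equal the raw Taylor coefficient $x^i_\zl$ (rather than $\zl!\,x^i_\zl$), so the result depends on the chosen convention for the induced coordinates.
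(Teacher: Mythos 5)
Your proof is correct and is exactly the verification the paper has in mind (the paper omits it as immediate): one applies the definition of $f^{(\zl)}$ to $f=x^i$ and identifies the resulting normalized derivative with the Taylor coefficient defining the induced coordinate $x^i_\zl$. Your remarks on representative-independence and on the role of the $\tfrac{1}{\zl!}$ normalization are accurate and appropriately brief.
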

\noindent The $\zl$-lifts of one-forms $\zw\in\zW^1(M)$ and vector fields $X\in\mathfrak{X}(M)$ are defined as follows.
\begin{theorem}\label{t6}
\
\begin{itemize}
\item
There exists a unique $\R$-linear lift $L_\zl:\zW^1(M)\to\zW^1(\sT^rM)$ such that
$$L_\zl(f\cdot \xd g)=(f\cdot \xd g)^{(\zl)}:=\sum_{\zm=0}^\zl f^{(\zm)}\xd g^{(\zl-\zm)}\,.$$
In particular, $(\xd x^i)^{(\zl)}=\xd x^i_\zl$.
\item There exists a unique $\R$-linear lift $L_\zl:\mathfrak{X}(M)\to\mathfrak{X}(\sT^rM)$ such that,
for $L_\zl(X)=X^{(\zl)}$  and for any smooth function $f$ on $M$, we have
\be\label{vf}X^{(\zl)}f^{(\zm)}=(Xf)^{(\zl+\zm-r)}\,.\ee
In particular,
$$\left(\frac{\pa}{\pa {x^i}}\right)^{(\zl)}=\frac{\pa}{\pa {x^i_{r-\zl}}}\,.$$
\end{itemize}
\end{theorem}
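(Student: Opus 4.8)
The plan is to handle the two items in turn, reducing each to the multiplicativity of the function lift encoded in the Leibniz rule (\ref{Lr}).

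For the one-form lift, uniqueness is immediate: every $\zw\in\zW^1(M)$ is a finite $\R$-linear combination of one-forms of the type $f\,\xd g$ (already in a chart, $\zw=\zw_i\,\xd x^i$ has this shape), so an $\R$-linear $L_\zl$ obeying the prescribed formula on these generators is determined on all of $\zW^1(M)$. For existence the only real point is that the formula descends from the generators $f\,\xd g$ to genuine one-forms, i.e.\ that it respects the relations $f\,\xd c=0$ for $c$ constant and $f\,\xd(gh)=fg\,\xd h+fh\,\xd g$ (bilinearity being automatic from $\R$-linearity). The constant relation is clear because $c^{(\zn)}=c\,\delta_{\zn 0}$, so each $\xd(c^{(\zl-\zm)})=0$. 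The cleanest way to see that the Leibniz relation is respected, and in fact to produce $L_\zl$ at one stroke, is to package all lifts together: assigning to $f$ its Taylor polynomial $\sum_{\zl=0}^{r}f^{(\zl)}\,t^\zl$ along curves defines, \emph{by} (\ref{Lr}), an $\R$-algebra homomorphism $C^\infty(M)\to C^\infty(\sT^rM)[t]/(t^{r+1})$; it extends to one-forms by $f\,\xd g\mapsto\big(\sum_\zm f^{(\zm)}t^\zm\big)\,\xd\big(\sum_\zn g^{(\zn)}t^\zn\big)$, which is well defined since $\,\widehat{(\cdot)}\,$ is an algebra homomorphism. Then $L_\zl(\zw)$ is exactly the coefficient of $t^\zl$ in the image of $\zw$, and for $\zw=f\,\xd g$ this coefficient is $\sum_{\zm=0}^{\zl}f^{(\zm)}\,\xd g^{(\zl-\zm)}$. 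Specializing $f=1$, $g=x^i$ and using Proposition \ref{hl} gives $(\xd x^i)^{(\zl)}=\xd x^i_\zl$.

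For the vector-field lift, uniqueness again comes first: a vector field on $\sT^rM$ is determined by its action on the coordinate functions $x^i_\zm=(x^i)^{(\zm)}$ (Proposition \ref{hl}), and (\ref{vf}) with $f=x^i$ evaluates $X^{(\zl)}$ on each of them, $X^{(\zl)}x^i_\zm=(Xx^i)^{(\zl+\zm-r)}$; for $X=\pa_{x^i}$ this reads $X^{(\zl)}x^j_\zm=\delta^j_i\,\delta_{\zm,\,r-\zl}$, i.e.\ $(\pa_{x^i})^{(\zl)}=\pa_{x^i_{r-\zl}}$. For existence I would set, in a chart,
$$X^{(\zl)}=\sum_{j,\zn}(Xx^j)^{(\zl+\zn-r)}\,\pa_{x^j_\zn}$$
(recall $f^{(\zl)}=0$ for $\zl<0$), which is $\R$-linear in $X$ and is the unique field realizing the required values on coordinates; it then remains to check that (\ref{vf}) holds for \emph{every} $f$, not just for coordinates. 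This rests on the elementary identity $\pa_{x^j_\zn}f^{(\zm)}=(\pa_{x^j}f)^{(\zm-\zn)}$ (verified on the Example and in general by differentiating the Taylor coefficients), which yields
$$X^{(\zl)}f^{(\zm)}=\sum_{j,\zn}(Xx^j)^{(\zl+\zn-r)}\,(\pa_{x^j}f)^{(\zm-\zn)}.$$
For fixed $j$ the inner sum over $\zn$ is a convolution in which the two orders add to $(\zl+\zn-r)+(\zm-\zn)=\zl+\zm-r$ independently of $\zn$, so by (\ref{Lr}) it collapses to $\big(Xx^j\cdot\pa_{x^j}f\big)^{(\zl+\zm-r)}$; summing over $j$ and using $\sum_j (Xx^j)\,\pa_{x^j}f=Xf$ gives $(Xf)^{(\zl+\zm-r)}$, which is (\ref{vf}). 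Since (\ref{vf}) is coordinate-free and, as shown, pins $X^{(\zl)}$ down uniquely, the chartwise definitions agree on overlaps and glue to a global vector field.

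The main obstacle in both parts is well-definedness rather than the closing identities: one must be certain that the prescribed formula does not depend on how a one-form is written as $\sum_a f_a\,\xd g_a$, respectively that the prescribed action on the (non-independent) generators $f^{(\zm)}$ extends consistently to a derivation of $C^\infty(\sT^rM)$. In each case the obstruction is removed by the single fact that the function lift is multiplicative, i.e.\ by the Leibniz rule (\ref{Lr}): for the one-forms it makes the total lift an algebra homomorphism, and for the vector fields it is precisely the index bookkeeping $(\zl+\zn-r)+(\zm-\zn)=\zl+\zm-r$ that lets the convolution collapse. Everything else --- the linearity, the constant relation, and the two ``in particular'' formulas --- is then routine.
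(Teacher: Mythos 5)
The paper offers no proof of Theorem \ref{t6}: it is quoted as part of Morimoto's lifting machinery (the neighbouring statements carry the citation \cite{Morimoto1970}), so there is nothing in the text to compare your argument against line by line. Your proof is correct and self-contained, and it supplies exactly what the paper takes for granted. The two key mechanisms you isolate are the right ones: the Leibniz rule (\ref{Lr}) makes $f\mapsto\sum_\zl f^{(\zl)}t^\zl$ an algebra homomorphism into $C^\infty(\sT^rM)[t]/(t^{r+1})$, which settles well-definedness of the one-form lift, and the identity $\pa_{x^j_\zn}f^{(\zm)}=(\pa_{x^j}f)^{(\zm-\zn)}$ together with the convolution collapse $(\zl+\zn-r)+(\zm-\zn)=\zl+\zm-r$ proves (\ref{vf}) for all $f$; I checked that as $\zn$ runs over $0,\dots,r$ with the vanishing convention for negative orders, one obtains precisely the admissible pairs in the Leibniz sum for $(Xx^j\cdot\pa_{x^j}f)^{(\zl+\zm-r)}$, and your chartwise formula for $X^{(\zl)}$ agrees with the paper's later formula (\ref{e}). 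Two small points you may wish to make explicit: the identity $\pa_{x^j_\zn}f^{(\zm)}=(\pa_{x^j}f)^{(\zm-\zn)}$ deserves a one-line verification (read off the coefficient of $t^\zm$ in $f(x_0+x_1t+\dots)$ after differentiating in $x^j_\zn$), and the uniqueness argument for the one-form lift needs either a locality remark or a partition-of-unity decomposition, since on a non-compact $M$ a one-form is only a locally finite sum of terms $f\,\xd g$ with globally defined $f,g$. Neither affects the validity of the argument.
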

\noindent The lifts $f^{(r)}$, $\zw^{(r)}$, and $X^{(r)}$ will be called \emph{complete lifts to $\sT^rM$} and denoted
$f^{(c)}$, $\zw^{(c)}$, and $X^{(c)}$ if $r$ is fixed. For $r=1$, they coincide with the complete lifts from the previous section, while 0-lifts coincide with vertical lifts.
If the vector field $X\in\mathfrak{X}(M)$ induces a (local) one-parameter group of transformations $\psi_t$, then $X^{c}\in\mathfrak{X}(\sT^rM)$ induces the (local) one-parameter group of transformations $\sT^r\psi_t$ on $\sT^rM$.
\begin{example}
For $r=2$ and a vector field $X=X^i(x)\frac{\pa}{\pa x^i}$, we have in coordinates $\left(x^i_0,x^j_1,x^k_2\right)$ on $\sT^2M$,
$$\begin{array}{l}\vspace{5pt}
\displaystyle X^{(0)}=X^i(x_0) \frac{\partial}{\partial x^i_{2}}\,, \\ \vspace{5pt}
\displaystyle X^{(1)}=X^i(x_0)\frac{\partial}{\partial x^i_1}+\frac{\partial X^k}{\partial x^j}(x_0) \,x^j_1\,\frac{\partial}{\partial x^k_2}\,,\\
\displaystyle X^{(2)}=X^i(x_0)\frac{\partial}{\partial x^i}+ \frac{\partial X^k}{\partial x^j}(x_0)\, x^j_1\,\frac{\partial}{\partial x^k_1}+
        \left(\frac{1}{2}\frac{\partial^2 X^l}{\partial x^n\partial x^m}(x_0)\,x^n_1\, x^m_1+\frac{\partial X^l}{\partial x^p}(x_0)\, x^p_2\right)\frac{\partial}{\partial x^l_2}.
\end{array}$$
For a one-form $\alpha=\alpha_i(x)\,\xd x^i$, we have in turn
$$\begin{array}{l}\vspace{5pt}
\displaystyle \alpha^{(0)}=\alpha_i(x_0)\,\xd x^i; \\ \vspace{5pt}
\displaystyle \alpha^{(1)}=\frac{\partial \alpha_i}{\partial x^j}(x_0)\, x^j_1\,\xd x^i+\alpha_k(x_0)\,\xd x^k_1;\\
\displaystyle \alpha^{(2)}=\left(\frac{1}{2}\frac{\partial \alpha_i}{\partial x^k\partial x^j}(x_0)\, x^k_1\, x^j_{1}+\frac{\partial \alpha_i}{\partial x^l}(x_0)\, x^l_2\right)\,\xd x^i+
                 \frac{\partial \alpha_m}{\partial x^n}(x_0)\, x^n_1\,\xd x^m_1 + \alpha_p(x_0)\,\xd x^p_2.
\end{array}$$
\end{example}

\begin{theorem}(\cite{Morimoto1970})
If $X,Y$ are vector fields on $M$ and $f\in C^\infty(M)$, then
$$(f\cdot X)^{(\zl)}=\sum_{\zm=0}^\zl f^{(\zm)}\,X^{(\zl-\zm)}\,.$$
In particular,
\be\label{e}\left(\sum_ia_i\pa_{x^i}\right)^{(\zl)}=\sum_i\sum_{\zn=r-\zl}^ra_i^{(\zn+\zl-r)}\pa_{x^i_\zn}\,.\ee
Moreover,
\be\label{lb} [X^{(\zl)},Y^{(\zm)}]=[X,Y]^{(\zl+\zm-r)}\,.\ee
\end{theorem}

\bigskip
\noindent Finally, we apply the generalized Leibniz rule for the lifts of tensor products:
$$(T\otimes S)^{(\zl)}=\sum_{\zm=0}^\zl T^{(\zm)}\ot S^{(\zl-\zm)}\,,$$
to obtain the lifts of  $q$-contravariant tensor fields,
$$ (X_1\ot\dots\ot X_q)^{(\zl)}=\sum_{\zm_1+\dots+\zm_q=\zl}(X_1)^{(\zm_1)}\ot\dots\ot(X_q)^{(\zm_q)}\,,$$
and $p$-covariant tensor fields,
$$(\za_1\ot\dots\ot\za_p)^{(\zl)}=\sum_{\zm_1+\dots+\zm_p=\zl}(\za_1)^{(\zm_1)}\ot\dots\ot(\za_p)^{(\zm_p)}\,.$$
\noindent The lifts of symmetric tensors are symmetric, the lifts of antisymmetric tensors are antisymmetric.
Actually, we can obtain in this way the lifts of arbitrary tensor fields:
$$L_\zl:\mathscr{T}^q_p(M)\to\mathscr{T}^q_p(\sT^rM)\,,$$ so
$$L_\zl:\mathscr{T}(M)\to\mathscr{T}(\sT^rM)\,.$$  For fixed $r$, the lifts $L_r(K)=K^{(r)}$ we will call the \emph{complete lifts} of $K$ and denote with $K^{(c)}$.
\begin{proposition} (\cite{Morimoto1970}) If $K\in\mathscr{T}^q_p(M)$, then
\be\label{contraction} K^{(\zl)}(X_1^{(\zm_1)},\dots,X_p^{(\zm_p)})=
\left(K(X_1,\dots,X_p)\right)^{(\zl+\zm-r\cdot p)}\,,
\ee
where $\zm=\sum_i\zm_i$, for any vector fields $X_1,\dots,X_p$ on $M$. In particular,
$$ K^c(X_1^c,\dots,X_p^c)=\left(K(X_1,\dots,X_p)\right)^c\,.$$
\end{proposition}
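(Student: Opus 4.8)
The plan is to reduce the whole identity \eqref{contraction} to the single duality pairing of a lifted one-form against a lifted vector field, and then to propagate this through the tensor-product Leibniz rule. First I would establish the base case $q=0$, $p=1$: for $\za\in\zW^1(M)$ and $X\in\mathfrak{X}(M)$,
\be\label{pairing}
\za^{(\zl)}(X^{(\zm)})=(\za(X))^{(\zl+\zm-r)}.
\ee
By $\R$-linearity of $L_\zl$ it suffices to verify \eqref{pairing} on $\za=f\,\xd g$. Expanding with Theorem~\ref{t6} gives $\za^{(\zl)}=\sum_{\zn=0}^\zl f^{(\zn)}\,\xd g^{(\zl-\zn)}$, and pairing with $X^{(\zm)}$ replaces each $\xd g^{(\zl-\zn)}(X^{(\zm)})$ by $X^{(\zm)}(g^{(\zl-\zn)})=(Xg)^{(\zm+\zl-\zn-r)}$ via the defining relation \eqref{vf}. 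The result is $\sum_{\zn=0}^\zl f^{(\zn)}(Xg)^{(\zl+\zm-r-\zn)}$, which is exactly the function Leibniz rule \eqref{Lr} applied to $(f\cdot(Xg))^{(\zl+\zm-r)}=(\za(X))^{(\zl+\zm-r)}$.

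Next I would treat a general $K\in\mathscr{T}^q_p(M)$. Again by $\R$-linearity it is enough to prove the formula for a decomposable tensor $K=Y_1\ot\dots\ot Y_q\ot\za_1\ot\dots\ot\za_p$. The tensor-product Leibniz rule then writes $K^{(\zl)}$ as a sum over $\zn_1+\dots+\zn_q+\zr_1+\dots+\zr_p=\zl$ of the terms $(Y_1)^{(\zn_1)}\ot\dots\ot(Y_q)^{(\zn_q)}\ot(\za_1)^{(\zr_1)}\ot\dots\ot(\za_p)^{(\zr_p)}$. Evaluating on $(X_1^{(\zm_1)},\dots,X_p^{(\zm_p)})$ contracts each covariant slot, and \eqref{pairing} turns $\za_i^{(\zr_i)}(X_i^{(\zm_i)})$ into $(\za_i(X_i))^{(\zr_i+\zm_i-r)}$. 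Writing $c_i=\za_i(X_i)$, this gives $K^{(\zl)}(X_1^{(\zm_1)},\dots,X_p^{(\zm_p)})$ as a sum over $\sum_j\zn_j+\sum_i\zr_i=\zl$ of $\bigl(\prod_i c_i^{(\zr_i+\zm_i-r)}\bigr)\,(Y_1)^{(\zn_1)}\ot\dots\ot(Y_q)^{(\zn_q)}$.

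Finally I would expand the right-hand side independently. Since $K(X_1,\dots,X_p)=\bigl(\prod_i c_i\bigr)\cdot(Y_1\ot\dots\ot Y_q)$, combining the Leibniz rule for $(f\cdot T)^{(\cdot)}$, the rule for products of functions \eqref{Lr}, and the tensor-product rule expresses $(K(X_1,\dots,X_p))^{(\zl+\zm-rp)}$ as a sum over $\zd_1+\dots+\zd_p+\zn_1+\dots+\zn_q=\zl+\zm-rp$ of $\bigl(\prod_i c_i^{(\zd_i)}\bigr)\,(Y_1)^{(\zn_1)}\ot\dots\ot(Y_q)^{(\zn_q)}$. The substitution $\zd_i=\zr_i+\zm_i-r$ converts the constraint $\sum_j\zn_j+\sum_i\zr_i=\zl$ into $\sum_j\zn_j+\sum_i\zd_i=\zl+\zm-rp$, matching the two sums term by term. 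I expect the only real obstacle to be combinatorial bookkeeping: one must confirm that the summation ranges coincide, which rests on the conventions $h^{(\zk)}=0$ for $\zk<0$ and for $\zk>r$, so that the terms lying outside $\{0,\dots,r\}$ drop out on both sides. The complete-lift assertion is then the special case $\zl=r$ and $\zm_i=r$, for which $\zl+\zm-rp=r$.
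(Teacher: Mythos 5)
The paper does not actually prove this proposition; it is quoted from Morimoto \cite{Morimoto1970} without proof, so there is nothing internal to compare against. Your argument is, however, a correct self-contained derivation from the stated Leibniz rules: the base case $\za^{(\zl)}(X^{(\zm)})=(\za(X))^{(\zl+\zm-r)}$ follows exactly as you say from Theorem \ref{t6} and \eqref{Lr}, with the mismatch of summation ranges ($\zn\le\zl$ versus $\zn\le\zl+\zm-r$) absorbed by the convention $h^{(\zk)}=0$ for $\zk<0$; and the reduction to decomposable tensors is legitimate since both sides are local and $\R$-linear in $K$. The one point worth making explicit in your final matching of sums: on the left-hand side the substitution $\zd_i=\zr_i+\zm_i-r$ can produce $\zd_i<0$ (those terms vanish by convention), while on the right-hand side one might worry about terms with $\zd_i>\zm_i$ having no counterpart on the left (they would correspond to $\zr_i>r$). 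In fact such terms cannot occur: the constraint $\sum_j\zn_j+\sum_i\zd_i=\zl+\zm-rp$ with all indices nonnegative forces $\zd_i\le\zl+\zm-rp-\sum_{i'\ne i}\zd_{i'}\le\zl-r+\zm_i\le\zm_i$, so the correspondence of nonzero terms really is a bijection. With that observation supplied, the proof is complete, and the ``in particular'' clause is indeed the case $\zl=r$, $\zm_i=r$.
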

\noindent Similarly to the case of the tangent bundle, there are higher lifts of affine connections.
\begin{theorem}(\cite{Morimoto1970,Yano1967})
Let $\nabla$ be a connection on a manifold $M$. There exists a unique affine connection $\nabla^c$ on $\sT^rM$ which satisfies the condition
\be\label{connection}\nabla^c_{X^{(\zl)}}K^{(\zm)}=\left(\nabla_XK\right)^{(\zl+\zm-r)}\,,\ee
for $\zl,\zm=0,\dots,r$, any vector field $X$ on $M$, and any tensor field $K$ on $M$. We have
\be\label{nablaK1}\nabla^cK^{(\zm)}=\left(\nabla K\right)^{(\zm)}\,.\ee
In particular,
$$\nabla^c_{X^c}K^c=\left(\nabla_XK\right)^c\quad\text{and}\quad \nabla^cK^c=\left(\nabla K\right)^c\,.$$
\end{theorem}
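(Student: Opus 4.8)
The plan is to prove existence and uniqueness separately, obtain $\nabla^c$ first as a connection (i.e. acting on the defining class $K=Y$ of vector fields), and only afterwards bootstrap the identity (\ref{connection}) up to arbitrary tensor fields.

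\textbf{Uniqueness.} First I would record that, by Theorem~\ref{t6}, the complete lifts of coordinate vector fields satisfy $(\pa_{x^i})^{(\zl)}=\pa_{x^i_{r-\zl}}$, so as $\zl$ runs through $0,\dots,r$ these lifts exhaust the full coordinate frame $\{\pa_{x^i_\zn}\}$ on $\sT^rM$; hence the lifted fields $\{X^{(\zl)}\}$ span the tangent module at every point. If $\nabla^c$ and $\wt\nabla^c$ both satisfy (\ref{connection}) with $K$ a vector field, their difference $D(Z,W)=\nabla^c_ZW-\wt\nabla^c_ZW$ is $C^\infty(\sT^rM)$-bilinear, the standard fact that a difference of connections is a $(1,2)$-tensor. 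Since $D$ vanishes on every pair $\big((\pa_{x^i})^{(\zl)},(\pa_{x^j})^{(\zm)}\big)$, i.e. on a frame, it vanishes identically, giving uniqueness.

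\textbf{Existence.} For existence I would work in a coordinate chart and set, on the frame,
$$\nabla^c_{(\pa_{x^i})^{(\zl)}}(\pa_{x^j})^{(\zm)}:=\big(\zG^k_{ij}\,\pa_{x^k}\big)^{(\zl+\zm-r)},$$
expanding the right-hand side into $\{\pa_{x^k_\zn}\}$ via (\ref{e}); this prescribes a full set of connection coefficients $\wt\zG$ and hence a genuine affine connection on the chart. It remains to check that this connection satisfies the chart-free relation (\ref{connection}) for \emph{arbitrary} lifted fields $X^{(\zl)},Y^{(\zm)}$. Writing $X=X_i\pa_{x^i}$, $Y=Y_j\pa_{x^j}$ and expanding both lifts through (\ref{e}), I would reduce the claim to an identity among lifted functions that follows from the Leibniz rule (\ref{Lr}), the action rule (\ref{vf}), and the module rule $(fX)^{(\zl)}=\sum_\zm f^{(\zm)}X^{(\zl-\zm)}$. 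Because (\ref{connection}) is expressed purely through the functorial lifts, it is chart-independent, so the uniqueness argument applied on overlaps forces the local connections to agree where charts meet, and they glue to a global $\nabla^c$.

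\textbf{Extension to tensors.} With $\nabla^c$ in hand I would establish (\ref{connection}) for general $K$ by induction on tensor type, exploiting that both $L_\zl$ and $\nabla^c$ obey matching Leibniz rules for tensor products. The base cases are $K=f$, where $\nabla^c_{X^{(\zl)}}f^{(\zm)}=X^{(\zl)}f^{(\zm)}=(Xf)^{(\zl+\zm-r)}$ by (\ref{vf}); $K=Y$, which is the defining relation; and $K=\za$ a one-form, obtained by pairing with an arbitrary $Y^{(\zn)}$ and using the contraction formula (\ref{contraction}): both $\nabla^c_{X^{(\zl)}}\za^{(\zm)}$ and $(\nabla_X\za)^{(\zl+\zm-r)}$ pair with $Y^{(\zn)}$ to yield $\big((\nabla_X\za)(Y)\big)^{(\zl+\zm+\zn-2r)}$, and, the $Y^{(\zn)}$ spanning pointwise, they coincide. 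A product is then treated by comparing $(K\ot S)^{(\zm)}=\sum_\zk K^{(\zk)}\ot S^{(\zm-\zk)}$ with the connection's product rule, and (\ref{nablaK1}) follows by pairing $\nabla^cK^{(\zm)}$ with lifts $X^{(\zl)}$ and invoking (\ref{contraction}) once more. The main obstacle is the consistency check inside the existence step: confirming that the frame prescription propagates, through $C^\infty$-linearity in the first slot and the Leibniz rule in the second, to the chart-free identity for all lifted fields — precisely where (\ref{Lr}), (\ref{vf}) and $(fX)^{(\zl)}=\sum f^{(\zm)}X^{(\zl-\zm)}$ must conspire and the degree bookkeeping of the shift $\zl+\zm-r$ has to be tracked with care; everything else is routine multilinear algebra and gluing.
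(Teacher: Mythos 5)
The paper does not prove this theorem at all: it is imported verbatim from Morimoto \cite{Morimoto1970} and Yano--Ishihara \cite{Yano1967}, so there is no internal proof to compare against. Judged on its own, your plan is sound and is essentially the classical argument of Morimoto. Uniqueness is correct: the difference of two connections is a $(1,2)$-tensor, and since $(\pa_{x^i})^{(\zl)}=\pa_{x^i_{r-\zl}}$ the lifted fields span each tangent space of $\sT^rM$ (this is exactly the Lemma the paper proves before Theorem \ref{statmlift}), so a tensor vanishing on all pairs of lifts vanishes. Your frame prescription $\nabla^c_{(\pa_{x^i})^{(\zl)}}(\pa_{x^j})^{(\zm)}=\big(\zG^k_{ij}\pa_{x^k}\big)^{(\zl+\zm-r)}$ is the right one, and the consistency check you flag as the main obstacle does close up: expanding $X^{(\zl)}=\sum_a (X^i)^{(a)}(\pa_{x^i})^{(\zl-a)}$ and $Y^{(\zm)}=\sum_b (Y^j)^{(b)}(\pa_{x^j})^{(\zm-b)}$, the derivative term contributes $(\pa_{x^i}Y^j)^{(\zl-a+b-r)}(\pa_{x^j})^{(\zm-b)}$ and the Christoffel term $(\zG^k_{ij}\pa_{x^k})^{(\zl+\zm-r-a-b)}$, which is precisely the Leibniz expansion of $(\nabla_XY)^{(\zl+\zm-r)}$ under the rule $(fX)^{(\zn)}=\sum f^{(\za)}X^{(\zn-\za)}$; the convention $f^{(\zn)}=0$ for $\zn<0$ is what makes the index ranges on the two sides agree, and it would be worth saying so explicitly. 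The extension to tensors by pairing with spanning lifts and invoking (\ref{contraction}), and the derivation of (\ref{nablaK1}) from (\ref{connection}), are routine as you say. The only thing I would ask you to write out rather than assert is the one-form base case and the tensor-product induction step, since that is where both Leibniz rules must be matched term by term; as a plan, nothing is missing.
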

\begin{remark} Note that all these lifts are defined intrinsically and have a functorial character, since they come from the \emph{flow natural equivalence} of functors (see e.g. \cite{Kolar1996a,Kolar1996,Wamba2012}).
\be\label{fne} \zk^r:\sT^r\circ\sT\to\sT\circ\sT^r\,,\quad \za^r:\sT^r\sT^*\to\sT^*\sT^r\,,\ee
An approach to the $r$-lifts we have defined \emph{via} the above equivalences one can find in \cite{Wamba2012}.
The equivalence $\za^{1}$ was discovered by Tulczyjew \cite{Tulczyjew1974} (see also \cite[26.11]{Kolar1996} for a more categorical approach) and used to an elegant intrinsic approach to Lagrangian systems which allows for singular Lagrangians \cite{Tulczyjew1977,Tulczyjew1989}.
For $\za^r$ in full generality one can consult \cite{Cantrijn1989}. On the other hand, the first of equivalences (\ref{fne}) is a particular example of a more general equivalence, namely
$$\zk^{AB}:\sT^A\circ\sT^B\to\sT^B\circ\sT^A\,,$$
valid for all Weil functors (see, e.g. \cite[35.18]{Kolar1996}).

\medskip\noindent A nice description of $\zk^{m,n}:\sT^m\sT^n\to\sT^n\sT^m$ is the following (cf. \cite[Example 5.1]{Grabowski2012}). Let
$\sT^{m,n}M$ be the space of `homotopies' $\R^2\ni(s,t)\mapsto\zh(s,t)\in M$ modulo the equivalence
$$\zh\sim\zh'\ \Longleftrightarrow \ \frac{\pa^{k+l}\,\zh}{\pa s^{k}\,\pa t^{\,l}}(0,0)=\frac{\pa^{k+l}\,\zh'}{\pa s^k\,\pa t^{\,l}}(0,0)\,,$$
where $k=0,1,\dots,m$ and $l=0,1,\dots,l$. The class of $\zh$ we will denote $[\zh]_{m,n}$. The natural map
$$\zt^{m,n}_M:\sT^{m,n}M\to M\,\quad \zt^{m,n}([\zh]_{m,n})=\zh(0,0)$$
makes $\sT^{m,n}M$ into a fiber bundle over $M$. For a smooth map $\zf:M\to N$ we define the morphism of fiber bundles
$$\sT^{m,n}\zf:\sT^{m,n}M\to\sT^{m,n}N\,,\quad \sT^{m,n}\zf([\zh]_{m,n})=[\zf\circ\zh]_{m,n}\,,$$
which leads to the functor $\sT^{m,n}$ from the category of manifolds to the category of fiber bundles.
There are obvious equivalences of functors,
$$J^{m,n}:\sT^{m,n}\to\sT^{n,m}\,,\quad J^{m,n}([h]_{m,n})=[\bar h]_{n,m}\,,$$
where $\bar h(t,s)=h(s,t)$, and
$$I^{m,n}:\sT^{m,n}\to\sT^m\circ\sT^n\,,\quad I^{m,n}([h]_{m,n})=[h^1]_m\,,$$
where $h^1:\R\to\sT^nM$ is defined by $h^1(s)=[t\mapsto h(s,t)]_n$.
Finally,
$$\zk^{m,n}=I^{n,m}\circ J^{m,n}\circ(I^{m,n})^{-1}:\sT^m\sT^n\to\sT^n\sT^m\,.$$
\end{remark}
\section{Lifts of statistical manifolds}
Let $(M,g,T)$ be a statistical manifold. The obvious lift of this structure to $\sT^rM$ is $(\sT^rM,g^c,T^c)$. As $T$ is a symmetric $(0,3)$-tensor, the tensor $T^c$ is also a symmetric $(0,3)$-tensor, so that $(\sT^rM,g^c,T^c)$ is a statistical manifold. We will call it the $r$-\emph{lift of the statistical manifold} $(M,g,T)$. Note however, that even if $g$ is a Riemannian metric, the metric $g^c$ is only pseudo-Riemannian.
Let us  see how such a lift looks like for the other geometric objects describing a statistical manifold.
\begin{theorem}\label{statmlift} For $(M,g)$ being a pseudo-Riemannian manifold, $\nabla$ being a connection on $M$, and $\nabla^*$ being the dual connection, let $g^c$, $\nabla^c$, $\left(\nabla^*\right)^c$ be the corresponding complete lifts to the bundle $\sT^rM$.
\begin{enumerate}
\item If $g$ is a pseudo-Riemannian metric on $M$, then $g^c$ is a pseudo-Riemannian metric on $\sT^rM$.
\item If $\nabla$ is a connection on $M$ and $\Tor^\nabla$, $R^\nabla$ are the torsion and the curvature of $\nabla$, then
$$\Tor^{\nabla^c}=\left(\Tor^\nabla\right)^c\quad\text{and}\quad R^{\nabla^c}=\left(R^\nabla\right)^c\,.$$
In particular, $\nabla^c$ is torsion-free (resp., flat) if $\nabla$ is torsion-free (resp., flat).
\item The complete lift of the dual connection $\nabla^*$ is the connection dual to $\nabla^c$, $\left(\nabla^*\right)^c=\left(\nabla^c\right)^*$.
\item If $\nabla^g$ is the metric connection for $g$, then $\left(\nabla^g\right)^c$ is the metric connection for $g^c$, $\nabla^{g^c}=\left(\nabla^g\right)^c$.
\item If $\nabla$ is torsion-free and $\nabla^g=\frac{1}{2}(\nabla+\nabla^*)$, then $\nabla^c$ is torsion-free and
$$\nabla^{g^c}=\frac{1}{2}(\nabla^c+\left(\nabla^c\right)^*)\,.$$
\item If $(\nabla,\nabla^*)$ is a dualistic structure on $M$, then $(\nabla^c,\left(\nabla^c\right)^*)$ is a dualistic structure on $\sT^rM$.
\item If $\nabla$ is torsion-free and Codazzi coupled with $g$, then $\nabla^c$ is torsion-free and Codazzi coupled with $g^c$.
\end{enumerate}
\end{theorem}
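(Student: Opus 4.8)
Each of the seven assertions is a statement about how the complete-lift operation $K \mapsto K^c$ (the $r$-lift) interacts with the metric, connections, torsion, duality, and the Codazzi condition. The key observation is that essentially everything reduces to two facts already established in the excerpt: the contraction formula for lifts of tensors,
\be\label{pf:contr}
K^{(\zl)}\big(X_1^{(\zm_1)},\dots,X_p^{(\zm_p)}\big)=\big(K(X_1,\dots,X_p)\big)^{(\zl+\zm-r\cdot p)}\,,\nn
\ee
and the compatibility of the connection lift with covariant differentiation, $\nabla^c_{X^{(\zl)}}K^{(\zm)}=(\nabla_X K)^{(\zl+\zm-r)}$. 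The strategy throughout is to verify each identity \emph{on complete lifts of vector fields}, i.e.\ on arguments of the form $X^{(\zl)}$, and then invoke the uniqueness/density built into the lifting theorems: a tensor field on $\sT^r M$ is determined once its values on all such lifted arguments are known, because the lifts $X^{(\zl)}$ span the tangent spaces of $\sT^r M$ pointwise.

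\textbf{Order of the steps.} First I would dispatch (1) and (2) directly: (1) is the non-degeneracy statement, which for $r=1$ is the observation in \eqref{g-lift} and in general follows because the lifted metric $g^c$ pairs the top-order lift $X^{(r)}$ against the bottom-order lift $Y^{(0)}$ via $g^c(X^{(r)},Y^{(0)})=(g(X,Y))^{(0)}$, giving an invertible block structure; (2) is quoted verbatim from Morimoto's theorem. Next I would prove (3), the duality claim, which is the linchpin. The defining relation for $(\nabla^c)^*$ is $g^c\big((\nabla^c)^*_{W}V,U\big)=W\,g^c(V,U)-g^c(V,\nabla^c_{W}U)$. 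I would test this with $W=X^{(\zl)}$, $V=Y^{(\zm)}$, $U=Z^{(\zn)}$, expand each term using the contraction formula and $\nabla^c_{X^{(\zl)}}Y^{(\zm)}=(\nabla_XY)^{(\zl+\zm-r)}$, and check that the result matches what one gets by lifting the pointwise defining identity for $\nabla^*$ on $M$, namely $g(\nabla^*_XY,Z)=X\,g(Y,Z)-g(Y,\nabla_XZ)$, and then applying the $L_{\zl+\zm+\zn-2r}$-lift. The grading indices must line up; this bookkeeping is where I expect the only real friction.

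\textbf{The remaining items as corollaries.} Once (3) holds, the rest cascades. For (4), since $\nabla^g$ is the unique torsion-free self-dual connection, and $(\nabla^g)^c$ is torsion-free by (2) and self-dual by (3) (as $(\nabla^g)^*=\nabla^g$ lifts to $((\nabla^g)^c)^*=((\nabla^g)^*)^c=(\nabla^g)^c$), uniqueness of the Levi-Civita connection of $g^c$ forces $\nabla^{g^c}=(\nabla^g)^c$. For (5), lifting the identity $\nabla^g=\tfrac12(\nabla+\nabla^*)$ and using that $L_r=(\cdot)^c$ is linear gives $(\nabla^g)^c=\tfrac12(\nabla^c+(\nabla^*)^c)$; combining with (3) and (4) yields $\nabla^{g^c}=\tfrac12(\nabla^c+(\nabla^c)^*)$. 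Item (6) is immediate from (2) and (3): torsion-freeness of both $\nabla^c$ and $(\nabla^c)^*=(\nabla^*)^c$ is inherited from $M$. Finally (7): Codazzi coupling of $(g,\nabla)$ means $\nabla g$ is symmetric, and by \eqref{nablaK1} we have $\nabla^c g^c=\nabla^c (g)^{(r)}=(\nabla g)^{(r)}=(\nabla g)^c$; since $\nabla g$ is a symmetric $(0,3)$-tensor its complete lift is again symmetric (lifts preserve symmetry), and $\nabla^c$ is torsion-free by (2), so $(g^c,\nabla^c)$ is Codazzi coupled.

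\textbf{The main obstacle.} The substantive work is entirely concentrated in step (3): managing the weight shifts $\zl+\zm-r$ in the covariant-derivative terms against $\zl+\zm+\zn-2r$ in the metric-pairing terms, and confirming that the three expanded terms of the $g^c$-duality relation reassemble into the $L$-lift of a single scalar identity on $M$. Everything before and after (3) is either quoted or a formal consequence of linearity and uniqueness of the relevant canonical objects.
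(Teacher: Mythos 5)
Your proposal is correct and follows essentially the same route as the paper: verify each identity on lifted vector fields $X^{(\zl)}$ using the contraction formula and $\nabla^c_{X^{(\zl)}}K^{(\zm)}=(\nabla_XK)^{(\zl+\zm-r)}$, invoke the fact that such lifts span $\sT\sT^rM$ pointwise, and then obtain (4)--(7) as formal corollaries of (2), (3) and the linearity of the lift (the paper likewise gets (1) from the anti-triangular pairing $g^c(X^{(\zl)},Y^{(\zm)})=(g(X,Y))^{(\zl+\zm-r)}$ after diagonalizing $g$, and proves (2) by the same direct computation you defer to Morimoto). The only substantive computation you leave unexecuted is the index bookkeeping in (3), which is exactly the calculation the paper carries out and which goes through as you predict.
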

\begin{remark} The above theorem expresses the functoriality of our construction.
\end{remark}
\begin{lemma} Let $M$ be a manifold of dimension $n$ and $\zp:\sT^rM\to M$ be the canonical projection. If vector fields $X_1,\dots,X_n$ generate the vector bundle $\sT M$ in a neighbourhood of $x\in M$, and $\zp(p)=x$ for some $p\in\sT^rM$, then $X_i^{(\zl)}$, $i=1,\dots,n$, $\zl=0,\dots,r$, generate the vector bundle $\sT\sT^rM$ in a neighbourhood of $p$ in $\sT^rM$.
\end{lemma}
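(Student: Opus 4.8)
The plan is to prove this as a local, fiberwise statement about the tangent bundle $\sT\sT^rM$, reducing the geometric claim to a linear-algebra fact about the coordinate forms of the lifts $X_i^{(\zl)}$. First I would work in local coordinates: choose coordinates $(x^i)$ on $M$ near $x$, inducing coordinates $(x^i_\zl)$, $\zl=0,\dots,r$, on $\sT^rM$, and correspondingly the adapted coordinates on $\sT\sT^rM$ coming from the frame $\{\pa_{x^i_\zl}\}$. The hypothesis that $X_1,\dots,X_n$ generate $\sT M$ near $x$ means that the matrix whose columns are the coordinate vectors of the $X_i$ is invertible at $x$, and hence (by continuity) in a neighbourhood; I would reduce to the case where this neighbourhood in $M$ pulls back under $\zp$ to a neighbourhood of $p$ in $\sT^rM$ on which all the relevant matrices remain invertible.

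The key computational input is the coordinate formula (\ref{e}) for the lifts, namely
$$\left(\sum_i a_i\pa_{x^i}\right)^{(\zl)}=\sum_i\sum_{\zn=r-\zl}^r a_i^{(\zn+\zl-r)}\,\pa_{x^i_\zn}\,.$$
Writing each $X_i=\sum_j X_i^j(x)\,\pa_{x^j}$ with $[X_i^j(x)]$ invertible at $x$, I would assemble the $n(r+1)$ vector fields $X_i^{(\zl)}$, $i=1,\dots,n$, $\zl=0,\dots,r$, and examine the $n(r+1)\times n(r+1)$ matrix $\mathcal M$ expressing them in the frame $\{\pa_{x^j_\zn}\}$. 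From the displayed formula, the component of $X_i^{(\zl)}$ along $\pa_{x^j_\zn}$ is the lift $(X_i^j)^{(\zn+\zl-r)}$, which vanishes for $\zn+\zl-r<0$, i.e. for $\zn<r-\zl$. The point is that, organizing rows by $\zl$ and columns by $\zn$, this matrix is block-triangular: the highest contribution is the $\zl=\zn=$ "diagonal" term where $(X_i^j)^{(0)}=X_i^j(x_0)$ (the value of the function, by the definition of the $0$-lift), producing exactly the invertible block $[X_i^j]$ evaluated at $x_0$ along the diagonal.

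From this block-triangular structure the determinant of $\mathcal M$ is, up to sign, a product of the diagonal blocks $\det[X_i^j(x_0)]$ raised to the power $r+1$, which is nonzero at $p$ since it is nonzero at $x=\zp(p)$. Hence $\mathcal M$ is invertible at $p$, so the $n(r+1)$ vector fields $X_i^{(\zl)}$ are linearly independent at $p$; as $\dim\sT\sT^rM=2n(r+1)/?$ — more precisely $\dim\sT^rM=n(r+1)$ so they form a frame of $\sT\sT^rM$ at $p$ — they generate $\sT\sT^rM$ near $p$ by continuity of the determinant. I expect the main obstacle to be bookkeeping: verifying carefully that the ordering of the indices $(\zl,\zn)$ really yields the claimed triangular shape with the invertible blocks precisely on the diagonal, and confirming that the off-diagonal lifted coefficients $(X_i^j)^{(\zn+\zl-r)}$ with $\zn+\zl>r$ occupy the strictly-triangular part so they cannot spoil invertibility. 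Once the triangularity is set up correctly, the conclusion is immediate from the nonvanishing of $\det[X_i^j(x_0)]$ and continuity.
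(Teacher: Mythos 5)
Your proposal is correct and follows essentially the same route as the paper: both arguments rest on formula (\ref{e}) and the observation that the lowest-order component of $X_i^{(\zl)}$ in the frame $\{\pa_{x^j_\zn}\}$ sits at $\zn=r-\zl$ with coefficient $(X_i^j)^{(0)}=X_i^j(x_0)$, giving a (block-)triangular structure with invertible blocks where $\zl+\zn=r$. The only cosmetic difference is that the paper first adapts coordinates so that $X_i(x)=\pa_{x^i}$, which turns your diagonal blocks into identity matrices and lets it read off spanning directly instead of computing a determinant; your indexing slip ``$\zl=\zn$'' should of course be $\zl+\zn=r$, but this does not affect the argument.
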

\begin{proof}
That the vector fields $X_1,\dots,X_n$ generate $\sT M$ around $x\in M$ is equivalent to the fact that $X_1(x),\dots,X_n(x)$ is a basis of $\sT_xM$.
Hence, there is a system $(x^1,\dots,x^n)$ of local coordinates around $x$ such that $X_i(x)=\pa_{x^i}$. In a neighbourhood of $x$ we can write $X_i=f_i^j\pa_{x^j}$, $i,j=1,\dots,n$, where $f_i^i(x)=1$ and $f_i^j(x)=0$ for $i\ne j$. According to (\ref{e}),
$$X_i^{(\zl)}=\left(\sum_jf_i^j\pa_{x^j}\right)^{(\zl)}=
\sum_j\sum_{\zn=r-\zl}^r(f_i^j)^{(\zn+\zl-r)}\pa_{x^j_\zn}\,.$$
Let us first put $\zl=r$. Then we have
$$X_i^{(r)}(p)=\sum_j(f_i^j)^{(0)}(p)\pa_{x^j}+(\text{higher})
=\sum_jf_i^j(x)\pa_{x^j}+(\text{higher})=\pa_{x^i}+(\text{higher})\,.$$
The text (higher) means a vector field which is a combination of $\pa_{x^j_\zn}$ with $\zn>0$. Generally, for $\zl<r$,
$$X_i^{(\zl)}(p)=\sum_j\sum_{\zn=r-\zl}^r(f_i^j)^{(\zn+\zl-r)}(p)\pa_{x^j_\zn}
=\sum_jf_i^j(x)\pa_{x^j_{r-\zl}}
+(\text{higher})=\pa_{x^i_{r-\zl}}+(\text{higher})\,,$$
where (higher) means a vector field which is a combination of $\pa_{x^j_\zn}$ with $\zn>r-\zl$. Now it is clear that  $X_i^{(\zl)}(p)$, $i=1,\dots,n$, $\zl=0,\dots,r$, span $\sT^r_pM$.

\end{proof}
\begin{proof}[Proof of Theorem \ref{statmlift}] \

1. Using the diagonalization algorithm for symmetric matrices, we can find local linearly independent vector fields $X_1,\dots,X_n$ on $M$, $n=\dim(M)$, such that
$g(X_i(x),X_i(x))=\pm 1$. According to (\ref{contraction}),
$$g^c(X_i^{(\zl)}(p),X_i^{(r-\zl)}(p))=\left(g(X_i,X_i)\right)^{(0)}(x)=g(X_i(x),X_i(x))\pm 1\,,$$
for $i=1,\dots,n$. But $X_i^{(\zl)}$, $i=1,\dots,n$, $\zl=0,\dots,r$, locally generate $\sT\sT^rM$, so $g^c$ is non-degenerate.

\medskip
2. Using (\ref{connection}) and (\ref{contraction}), we get
\be\label{nablac}\nabla^c_{\pa_{x^i_{r-\zl}}}\!\!\!(\pa_{x^j_{r-\zm}})=\nabla^c_{(\pa_{x^i})^{(\zl)}}(\pa_{x^j})^{(\zm)}=
\left(\nabla_{\pa_{x^i}}\pa_{x^j}\right)^{(\zl+\zm-r)}\,,
\ee
so that
\beas &\Tor^{\nabla^c}\left(\pa_{x^i_{r-\zl}},\pa_{x^j_{r-\zm}}\right)=\nabla^c_{\pa_{x^i_{r-\zl}}}\!\!\!(\pa_{x^j_{r-\zm}})-
\nabla^c_{\pa_{x^j_{r-\zm}}}\!\!\!(\pa_{x^i_{r-\zl}})=\left(\nabla_{\pa_{x^i}}\pa_{x^j}-
\nabla_{\pa_{x^j}}\pa_{x^i}\right)^{(\zl+\zm-r)}\\
&=\big[\Tor^\nabla(\pa_{x^i},\pa_{x^j})\big]^{(\zl+\zm-r)}=\left(\Tor^\nabla\right)^c\left((\pa_{x^i})^{(\zl)},(\pa_{x^j})^{(\zm)}\right)
=\left(\Tor^{\nabla}\right)^c\left(\pa_{x^i_{r-\zl}},\pa_{x^j_{r-\zm}}\right)\,.
\eeas
Similarly,
\beas
& R^{\nabla^c}\left(\pa_{x^i_{r-\zl}},\pa_{x^j_{r-\zm}},\pa_{x^k_{r-\zn}}\right)=\nabla^c_{(\pa_{x^i})^{(\zl)}}\!\!
\nabla^c_{(\pa_{x^j})^{(\zm)}}(\pa_{x_k})^{(\zn)}-\nabla^c_{(\pa_{x^j})^{(\zm)}}\!\!
\nabla^c_{(\pa_{x^i})^{(\zn)}}(\pa_{x_k})^{(\zn)}\\
&=\nabla _{{{\left( {{\partial _{{x^i}}}} \right)}^{\left( \lambda  \right)}}}^c\!\!\left(\nabla_{\pa_{x^j}}\pa_{x^k}\right)^{(\zm+\zn-r)}-
\nabla _{{{\left( {{\partial _{{x^j}}}} \right)}^{\left( \zm  \right)}}}^c\!\!\left(\nabla_{\pa_{x^i}}\pa_{x^k}\right)^{(\zl+\zn-r)}\\
&=\left(\nabla_{\pa_{x^i}}\nabla_{\pa_{x^j}}\pa_{x^k}-\nabla_{\pa_{x^j}}\nabla_{\pa_{x^i}}\pa_{x^k}\right)^{(\zl+\zm+\zn-2r)}
=\left(R^\nabla(\pa_{x^i},\pa_{x^j},\pa_{x^k})\right)^{(\zl+\zm+\zn-2r)}\\
&=\left(R^\nabla\right)^c\left((\pa_{x^i})^{(\zl)},(\pa_{x^j})^{(\zm)},(\pa_{x^k})^{(\zn)}\right)=
\left(R^{\nabla}\right)^c\left(\pa_{x^i_{r-\zl}},\pa_{x^j_{r-\zm}},\pa_{x^k_{r-\zn}}\right)\,.
\eeas

3. In view of (\ref{vf}) and (\ref{nablac}), we have
\beas &g^c\left(\left(\nabla^*\right)^c_{\pa_{x^i_{r-\zl}}}\!\!\!\pa_{x^j_{r-\zm}},\pa_{x^k_{r-\zn}}\right)=
g^c\left(\left(\nabla^*_{\pa_{x^i}}\pa_{x^j}\right)^{(\zl+\zm-r)},\left(\pa_{x^k}\right)^{(\zn)}\right)\\
&=\left(g\left(\nabla^*_{\pa_{x^i}}\pa_{x^j},\left(\pa_{x^k}\right)\right)\right)^{(\zl+\zm+\zn-2r)}
\!\!\!=\big[\pa_{x^i}\big(g\left(\pa_{x^j},\pa_{x^k}\right)\big)-g\left(\pa_{x^j},\nabla_{\pa_{x^i}}\pa_{x^k}\right)\big]^{(\zl+\zm+\zn-2r)}\\
&=(\pa_{x^i})^{(\zl)}\,\big[g\left(\pa_{x^j},\pa_{x^k}\right)\big]^{(\zm+\zn-r)}-g^c\left((\pa_{x^j})^{(\zl)},
\left(\nabla_{\pa_{x^i}}\pa_{x^k}\right)^{(\zm+\zn-r)}\right)\\
&=(\pa_{x^i})^{(\zl)}\Big(g^c\left((\pa_{x^j})^{(\zm)},(\pa_{x^k})^{(\zn)}\right)\Big)-g^c\left((\pa_{x^j})^{(\zm)},
\nabla^c_{(\pa_{x^i})^{(\zl)}}(\pa_{x^k})^{(\zn)}\right)\\
&=g^c\left(\left(\nabla^c\right)^*_{\pa_{x^i_{r-\zl}}}\!\!\!\pa_{x^j_{r-\zm}},\pa_{x^k_{r-\zn}}\right)\,.
\eeas

4. Metric connections are characterized as torsion-free and self-dual. From 2) and 3) it follows that these properties
are respected when passing to the complete lifts.

5. and 6. are trivial, in view of previous points.

6. If $\nabla g$ is symmetric, then according to (\ref{nablaK1}),
$$\nabla^c g^c=\left(\nabla g\right)^c$$
is symmetric.
\end{proof}
\noindent Note that the complete $r$-lifts of a statistical manifold are independent on the choice of description: in terms of the $(0,3)$ tensor, or in terms of the connection and its dual. Actually, $\left(\nabla^\za\right)^c=(\nabla^c)^\za$. Indeed, if
$$g\left(\nabla^\za_XY,Z \right)=g\left(\nabla^g_XY,Z \right)-\frac{\za}{2}T(X,Y,Z)\,,$$
then
\beas &g^c\left(\left(\nabla^\za\right)^c_{X^{(\zl)}}Y^{(\zm)},Z^{(\zn)} \right)=
g^c\left(\left(\nabla_XY\right)^{(\zl+\zm-r)},Z^{(\zn)}\right)=\big[g\left(\nabla_XY,Z\right)\big]^{(\zl+\zm+\zn-2r)}\\
&=\big[g\left(\nabla^g_XY,Z\right)-\frac{\za}{2}T(X,Y,Z)\big]^{(\zl+\zm+\zn-2r)}=
g^c\left(\left(\nabla^g\right)^c_{X^{(\zl)}}Y^{(\zm)},Z^{(\zn)}\right)-T^c\left(X^{(\zl)},Y^{(\zm)},Z^{(\zn)}\right)\\
&=g^c\left((\nabla^c)^g_{X^{(\zl)}}Y^{(\zm)},Z^{(\zn)}\right)-T^c\left(X^{(\zl)},Y^{(\zm)},Z^{(\zn)}\right)
=g^c\left((\nabla^c)^\za_{X^{(\zl)}}Y^{(\zm)},Z^{(\zn)} \right)\,.
\eeas
\begin{example}\label{tlFR}
The tangent lift of the Fisher-Rao metric is (cf. (\ref{g-lift}))
$$\left(\int_\zW \bigg[\frac{\pa^2\,\log p(\zx,x)}{\pa x^k\,\pa x^i}\,\frac{\pa\,\log p(\zx,x)}{\pa x^j}+\frac{\pa\,\log p(\zx,x)}{\pa x^i}\,\frac{\pa^2\,\log p(\zx,x)}{\pa\,x^k\pa\, x^j}\bigg]\,p(\zx,x)\,\xd\zx\right)\dot x^k\,\xd x^i\ot\xd x^j$$
$$+\left(\int_\zW \frac{\pa\,\log p(\zx,x)}{\pa\, x^i}\,\frac{\pa\,\log p(\zx,x)}{\pa\, x^j}\,\frac{\pa\,\log p(\zx,x)}{\pa\, x^k}\,p(\zx,x)\,\xd\zx\right)\dot x^k\,\xd x^i\ot\xd x^j
$$
$$+\left(\int_\zW \frac{\pa\,\log p(\zx,x)}{\pa\, x^i}\,\frac{\pa\,\log p(\zx,x)}{\pa\, x^j}
\,p(\zx,x)\,\xd\zx\right)\left(\xd\dot x^i\ot\xd x^j+\xd x^i\ot\xd\dot x^j\right)\,.
$$
\end{example}
\begin{example}
The Gaussian manifold $M$ is the manifold of probability densities
$$f(\zx,\mu,\sigma)=\frac{1}{\sqrt{2\pi}\sigma}\exp\left(-\frac{(\zx-\mu)^2}{2\sigma^2}\right)$$
on the real line. It is two-dimensional with global coordinates $(\mu,\sigma)$, where $\sigma>0$, and equipped with the metric
$$g=\frac{1}{\sigma^2}(d\mu\otimes d\mu+2d\sigma\otimes d\sigma)$$
and the skewness tensor
$$T=\frac{2}{\sigma^3}(d\mu\otimes d\mu\otimes d\sigma+d\mu\otimes d\sigma\otimes d\mu+d\sigma\otimes d\mu\otimes d\mu)+
\frac{8}{\sigma^3}d\sigma\otimes d\sigma\otimes d\sigma.$$
The Christoffel symbols of the Levi-Civita connection associated with $g$ read
$$\Gamma^\mu_{\mu\mu}=\Gamma^\mu_{\sigma\sigma}=\Gamma^\sigma_{\mu\sigma}=\Gamma^\sigma_{\sigma\mu}=0\,, \quad
\Gamma^\mu_{\mu\sigma}=\Gamma^\mu_{\sigma\mu}=\Gamma^\sigma_{\sigma\sigma}=-\frac{1}{\sigma}\,, \quad
\Gamma^\sigma_{\mu\mu}=\frac{1}{2\sigma}\,.$$
In a more compact way, we can describe the above connection by specifying vector fields $\mathcal{M}$ and $\mathcal{S}$ on $\sT M$ being horizontal lifts of $\partial_\mu$ and $\partial_\sigma$, respectively. In coordinates
$(\mu,\sigma,\dot\mu,\dot\sigma)$ they read
$$\mathcal{M}=\partial_\mu+\frac{\dot\sigma}{\sigma}\partial_{\dot\mu}-\frac{\dot\mu}{2\sigma}\partial_{\dot\sigma}, \qquad
\mathcal{S}=\partial_\sigma+\frac{\dot\mu}{\sigma}\partial_{\dot\mu}+\frac{\dot\sigma}{\sigma}\partial_{\dot\sigma}.$$
The $\alpha$-connection described by connection coefficients reads
$$\aGamma{}^\mu_{\mu\mu}=\aGamma{}^\mu_{\sigma\sigma}=\aGamma{}^\sigma_{\mu\sigma}=\aGamma{}^\sigma_{\sigma\mu}=0\,, \quad
\aGamma{}^\mu_{\mu\sigma}=\aGamma{}^\mu_{\sigma\mu}=-\frac{\alpha+1}{\sigma}\,, \quad
\aGamma{}^\sigma_{\sigma\sigma}=-\frac{2\alpha+1}{\sigma}\,,\quad
\aGamma{}^\sigma_{\mu\mu}=\frac{1-\alpha}{2\sigma}\,,$$
while the horizontal vector fields are
$$\overset{\alpha}{\mathcal{M}}=\partial_\mu+\frac{(1+\alpha)\dot\sigma}{\sigma}\partial_{\dot\mu}
-\frac{(1-\alpha)\dot\mu}{2\sigma}\partial_{\dot\sigma}\,, \qquad
\overset{\alpha}{\mathcal{S}}=\partial_\sigma+\frac{(1+\alpha)\dot\mu}{\sigma}\partial_{\dot\mu}
+\frac{(1+2\alpha)\dot\sigma}{\sigma}\partial_{\dot\sigma}\,.$$

\noindent The commutator $[\overset{\alpha}{\mathcal{M}},\overset{\alpha}{\mathcal{S}}]$ is
$$[\overset{\alpha}{\mathcal{M}},\overset{\alpha}{\mathcal{S}}]=\frac{(1-\alpha^2)}{2\sigma^2}
\left(2\dot\sigma\partial_{\dot\mu}-\dot\mu\partial_{\dot\sigma}\right)\,.$$
From the above formulae we can see that $\alpha$-connections are flat for $\alpha=\pm 1$. One can therefore find two global coordinate systems on $M$ such that the Christoffel symbols of the $1$-connection and of the $(-1)$-connection, respectively, are all zero. For the $1$-connection the appropriate coordinate system is
$$x^1=\frac{\mu}{\sigma^2}, \qquad x^2=-\frac{1}{2\sigma^2}\,.$$
The above coordinate system is referred to in the literature as the system of {\it natural coordinates} on Gaussian manifold. For the $(-1)$-connection the appropriate coordinate system is given by the first two momenta of the Gaussian distribution, namely
$$m^1=\mu, \qquad m^2=\mu^2+\sigma^2\,.$$
\noindent The tangent lift $g^c$ is a pseudo-Riemannian metrics on $\sT M$ described in coordinates by (\ref{g-lift}). It can be defined geometrically as follows. The metric tensor $g$ gives rise to the vector bundle isomorphism
$$G:\sT M\longrightarrow \sT^\ast M, \qquad G(v)=g(v,\cdot).$$
The corresponding isomorphism $G^c$ for $g^c$ is a map $G^c:\sT\sT M\rightarrow\sT^\ast\sT M$ defined by
$$G^c=\alpha_M\circ\sT G\circ \kappa_M,$$
where $\kappa_M: \sT\sT M\rightarrow \sT\sT M$ is the canonical flip on $\sT\sT M$, and $\alpha_M: \sT\sT^\ast M\rightarrow \sT^\ast \sT M$ is its dual (cf. (\ref{fne})) called the \emph{Tulczyjew isomorphism} \cite{Tulczyjew1977,Tulczyjew1989}.
Translating $G^c$ back to $g^c$, we get in coordinates $(\mu,\sigma,\dot\mu,\dot\sigma)$,
$$g^c=\frac{1}{\sigma^3}[-2\dot\sigma d\mu\otimes d\mu-4\dot\sigma d\sigma\otimes d\sigma+
\sigma(d\dot\mu\otimes d\mu+d\mu\otimes d\dot\mu)+2\sigma(d\dot\sigma\otimes d\sigma+d\sigma\otimes d\dot\sigma)
]\,;$$
in matrix form
$$\frac{1}{\sigma^2}\left[\begin{array}{cccc}
                      -2\dot\sigma & 0 & \sigma & 0 \\
                      0 & -4\dot\sigma & 0 & 2\sigma \\
                      \sigma & 0 & 0 & 0 \\
                      0 & 2\sigma & 0 & 0
                    \end{array}\right]\,.$$
It is easy to see that $g^c$ has signature $(2,2)$, so it is of index 0 as stated in section \ref{sec:4}.

\noindent The components of the lifted skewness tensor $T^c$ are either $0$ or equal (by symmetric permutation of indices)
$$(T^c)_{\mu\mu\sigma}=-\frac{6\dot\sigma}{\sigma^4}\,, \qquad
(T^c)_{\sigma\sigma\sigma}=-\frac{24\dot\sigma}{\sigma^4}\,,$$
$$(T^c)_{\dot\mu\mu\sigma}=(T^c)_{\mu\mu\dot\sigma}=\frac{2}{\sigma^3}\,,\qquad (T^c)_{\dot\sigma\sigma\sigma}=\frac{8}{\sigma^3}\,.$$
\noindent The Levi-Civita and $\alpha$-connections can be lifted to $\sT M$. Since the dimension of the manifold grows, so is the number of the Christoffel symbols. It is then more convenient to look at the horizontal distribution. The horizontal distribution of the lifted Levi-Civita connection is the distribution on $\sT\sT M$ obtained as the image by $\sT\kappa_M$ of the subbundle of $\sT(\sT\sT M)$ which is spanned by the vertical and the complete lifts of $\mathcal{M}$ and $\mathcal{S}$.
Using coordinates $(\mu,\sigma,\dot\mu,\dot\sigma)$ in $\sT M$ and
$(\mu,\sigma,\dot\mu,\dot\sigma, \delta\mu,\delta\sigma,\delta\dot\mu,\delta\dot\sigma )$ in $\sT\sT M$, we can write the horizontal lifts of
$\partial_\mu$, $\partial_{\dot\mu}$, $\partial_\sigma$, $\partial_{\dot\sigma}$ with respect to the lifted Levi-Civita connection as
$$\begin{array}{l}\vspace{10pt}
\displaystyle \mathcal{X}_\mu=\partial_\mu-\frac{\delta\mu}{2\sigma}\partial_{\delta\sigma}
+\frac{\delta\sigma}{\sigma}\partial_{\delta\mu}+
\left(\frac{\dot\sigma\delta\mu}{2\sigma^2}-\frac{\delta\dot\mu}{2\sigma}\right)\partial_{\delta\dot\sigma}+
\left(-\frac{\dot\sigma\delta\sigma}{\sigma^2}+\frac{\delta\dot\sigma}{2\sigma}\right)\partial_{\delta\dot\mu}\,, \\ \vspace{10pt}
\displaystyle \mathcal{X}_{\dot\mu}=\partial_{\dot\mu}-\frac{\delta\mu}{2\sigma}\partial_{\delta\dot\sigma}+
\frac{\delta\sigma}{\sigma}\partial_{\delta\dot\mu}\,, \\ \vspace{10pt}
\displaystyle \mathcal{X}_\sigma=\partial_{\sigma}+\frac{\delta\mu}{\sigma}\partial_{\delta\mu}
+\frac{\delta\sigma}{\sigma}\partial_{\delta\sigma}+
\left(\frac{-\delta\mu\dot\sigma}{\sigma^2}+\frac{\delta\dot\mu}{\sigma}\right)\partial_{\delta\dot\mu}+
\left(\frac{-\delta\sigma\dot\sigma}{\sigma^2}+\frac{\delta\dot\sigma}{\sigma}\right)\partial_{\delta\dot\sigma}\,, \\ \vspace{10pt}
\displaystyle \mathcal{X}_{\dot\sigma}=\partial_{\dot\sigma}+\frac{\delta\mu}{\sigma}\partial_{\delta\dot\mu}
+\frac{\delta\sigma}{\sigma}\partial_{\delta\dot\sigma}\,.
\end{array}$$
From the above formulae we obtain the Christoffel symbols of the lifted Levi-Civita connection (symbols not listed below vanish):
$$\Gamma^\mu_{\mu\sigma}=\Gamma^\mu_{\sigma\mu}=\Gamma^\sigma_{\sigma\sigma}=-\frac{1}{\sigma}\,, \quad
\Gamma^\sigma_{\mu\mu}=\frac{1}{2\sigma}\,.$$
$$\Gamma^{\dot\mu}_{\mu\sigma}=\Gamma^{\dot\mu}_{\sigma\mu}=\frac{\dot\sigma}{\sigma^2}\,, \quad
\Gamma^{\dot\mu}_{\dot\mu\sigma}=\Gamma^{\dot\mu}_{\sigma\dot\mu}=-\frac{1}{\sigma}\,, \quad
\Gamma^{\dot\mu}_{\mu\dot\sigma}=\Gamma^{\dot\mu}_{\dot\sigma\mu}=-\frac{1}{\sigma}\,,$$
$$\Gamma^\sigma_{\mu\mu}=\frac{1}{2\sigma}, \quad \Gamma^\sigma_{\sigma\sigma}=-\frac{1}{\sigma}\,,$$
$$\Gamma^{\dot\sigma}_{\sigma\sigma}=\frac{\dot\sigma}{\sigma^2}\,, \qquad
\Gamma^{\dot\sigma}_{\mu\mu}=-\frac{\dot\sigma}{2\sigma^2}\,, \qquad
\Gamma^{\dot\sigma}_{\mu\dot\mu}=\Gamma^{\dot\sigma}_{\dot\mu\mu}=\frac{1}{2\sigma}\,,\quad
\Gamma^{\dot\sigma}_{\sigma\dot\sigma}=\Gamma^{\dot\sigma}_{\dot\sigma\sigma}=-\frac{1}{\sigma}\,.$$
If we are patient enough, we can do the same for the $\alpha$-connection. We can also follow the rules of calculating the Christoffel symbols for the lifted connection which are the following: all Christoffel symbols with three dotted coordinates are zero; the symbols with one dotted up and one dotted down are equal to these with no dots, while the symbols with two dotted down and no dots up are zero; the symbols with one dotted coordinates are non-zero only when the dotted coordinate is up and then they are tangent lifts of non-dotted symbols; non-dotted symbols are the same as for the original connection on $M$. The table for the Christoffel symbols for the lifted $\alpha$-connection is then the following.
$$\aGamma{}^{\mu}_{\mu\sigma}=\aGamma{}^{\mu}_{\sigma\mu}=\aGamma{}^{\dot\mu}_{\dot\mu\sigma}
=\aGamma{}^{\dot\mu}_{\dot\sigma\mu}=
\aGamma{}^{\dot\mu}_{\mu\dot\sigma}=\aGamma{}^{\dot\mu}_{\sigma\dot\mu}=-\frac{\alpha+1}{\sigma}\,,$$
$$\aGamma{}^{\sigma}_{\sigma\sigma}=\aGamma{}^{\dot\sigma}_{\dot\sigma\dot\sigma}=\aGamma{}^{\sigma}_{\sigma\sigma}
=-\frac{2\alpha+1}{\sigma}\,, \qquad
\aGamma{}^{\sigma}_{\mu\mu}=\aGamma{}^{\dot\sigma}_{\dot\mu\mu}=\aGamma{}^{\dot\sigma}_{\mu\dot\mu}
=\frac{1-\alpha}{2\sigma}\,,$$
$$\aGamma{}^{\dot\sigma}_{\sigma\sigma}=\frac{(2\alpha+1)\dot\sigma}{\sigma^2}\,,\qquad
\aGamma{}^{\dot\sigma}_{\mu\mu}=-\frac{(1-\alpha)\dot\sigma}{2\sigma^2}\,,\qquad
\aGamma{}^{\dot\mu}_{\mu\sigma}=\aGamma{}^{\dot\mu}_{\sigma\mu}=\frac{(\alpha+1)\dot\sigma}{\sigma^2}\,.$$
\end{example}
\begin{example}
Let $x=(x^1,\ldots, x^n)\in\R^n=M$ and $\zx=(\zx^1,\ldots, \zx^n)\in\R^n$. The exponential family is a family of probability measures of the form
$$p(\zx,x)=\exp\left(k(\zx)+\lan x\,|\,\zx\ran-\psi(x)\right)d^n\zx\,,$$
where $k(\zx)$ is a normalizing factor such that $\int p(\zx,x)d^n\zx=1$ and $\psi$ is some convex function on $\R^n$. The Fisher information matrix is calculated according to the usual formula,
$$g_{ij}(x)=\int \left(\frac{\partial\log p(\zx,x)}{\pa x^i}\,\frac{\partial\log p(\zx,x)}{\pa x^j}\right)\,p(\zx,x)\,d^n\zx=\frac{\partial^2\psi}{\pa x^j\pa x^i}(x)\,.$$
We have used here some regularity assumptions, namely that we can differentiate under the integral sign. We have then
$$0=\frac{\partial}{\pa x^i}\left(\int p(\zx,x)d^n\zx\right)=\int\frac{\partial \log p(
\zx,x)}{\pa x^i}\,p(\zx,x)\,d^n\zx\,.$$
Differentiating once more, we get
\begin{multline*}0=\frac{\partial}{\pa x^j}\left(\int\frac{\partial \log p(
\zx,x)}{\pa x^i}\,p(\zx,x)\,d^n\zx\right)=\\
\int\left(\frac{\partial^2 \log p(\zx,x)}{\pa x^j\pa x^i}+\frac{\partial\log p(\zx,x)}{\pa x^i}\,\frac{\partial\log p(\zx,x)}{\pa x^j}\right)p(\zx,x)\,d^n\zx=\\
-\frac{\partial^2 \psi}{\pa x^j\pa x^i}(x)+\int\left(\frac{\partial\log p(\zx,x)}{\pa x^i}\,\frac{\partial\log p(\zx,x)}{\pa x^j}\right)p(\zx,x)\,d^n\zx\,.
\end{multline*}
The Riemannian metric for the statistical model of the exponential family in natural coordinates $(x^i)$ is given then by the Hessian of the function $\psi$,
$$g(x)=\frac{\partial^2 \psi}{\pa x^j\pa x^i}(x)\, \xd x^i\otimes \xd x^j\,.$$
The complete lift of $g$ to $\sT M$ reads then
$$g^c(x,\dot x)=\frac{\partial^3 \psi}{\pa x^k\pa x^j\pa x^i}(x)\,\dot x^k\, \xd x^i\otimes \xd x^j+\frac{\partial^2 \psi}{\pa x^j\pa x^i}(x)\left(\xd \dot x^i\otimes\xd x^j+\xd x^i\otimes \xd\dot x^j\right)\,.$$
It is given by the Hessian of the complete lift $\psi^c$ of the function $\psi$,
$$\psi^{c}(x,\dot x)=\frac{\partial\psi}{\pa x^i} (x)\,\dot x^i\,.$$
\noindent The Levi-Civita connection for the exponential family is conveniently described by the Christoffel symbols $\Gamma_{ijk}=g_{kl}\,\Gamma^{l}_{ij}$. We have
$$\Gamma_{ijk}=\frac{1}{2}\frac{\partial^3 \psi}{\pa x^k\pa x^j\pa x^i}\,.$$
Let $\bar\Gamma^*_{**}$ denote the Christoffel symbols of the Levi-Civita conection lifted to $\sT M$. Using general rules, we get
$$\bar\Gamma^{\bar i}_{\bar j\bar k}=\bar\Gamma^{i}_{\bar j\bar k}=\bar\Gamma^{i}_{j \bar k}=\bar\Gamma^{\bar i}_{\bar j k}=0\,,$$
$$\bar\Gamma^{\bar i}_{\bar j k}=\bar\Gamma^{\bar i}_{j\bar k}=\Gamma^{i}_{jk}\,, \qquad
\bar\Gamma^{\bar i}_{jk}=(\Gamma^{i}_{jk})^{(1)}\,.$$
Taking into account the components of the lifted metric, we can calculate the Christoffel symbols of the lifted connection with lowered index. What we get is
$$\bar\Gamma_{ijk}=(\Gamma_{ijk})^{(1)}, \qquad\bar \Gamma_{\bar ijk}=\bar \Gamma_{i\bar jk}=\bar \Gamma_{ij\bar k}=\Gamma_{ijk}\,,$$
while coefficients with two or three `lifted' indices vanish. For the exponential family it means precisely that the coefficients of the lifted connection with lowered index are third order partial derivatives of the complete lift of $\psi$.

\noindent The skewness tensor $T$ is also given by a very simple formula:
$$T_{ijk}=\frac{\partial^3 \psi}{\pa x^k\pa x^j\pa x^i}\,.$$
It is easy to see now that the connection $\nabla$ is in this case flat, and the natural coordinate system $(x^i)$ is an affine system for this connection.
The tangent lift $T^c$ of $T$ is again given by the complete lift of the function $\psi$,
namely
$$(T^c)_{ijk}=\frac{\partial^3 \psi^c}{\pa x^k\pa x^j\pa x^i}
\quad\text{and}\quad (T^c)_{\bar{i}jk}=\frac{\partial^3 \psi^c}{\pa\bar x^i\pa x^j\pa x^k}
=\frac{\partial^3 \psi}{\pa x^i\pa x^j\pa x^k}\,.$$
Since $\psi^{c}$ is linear in the $(\dot x^i)$ coordinates, higher derivatives of $\psi^{c}$ with respect of these coordinates are $0$, so the corresponding coefficients of $T^c$ vanish as well. Summarising, the geometry of exponential family in natural coordinates is defined by the function $\psi$: the metric is just the Hessian of $\psi$, while the Levi-Civita connection  and the skewness tensor are expressed by the third-order partial derivatives of $\psi$. The lifted geometry is associated with the complete lift of $\psi$. In the adapted coordinates, the metric, the metric connection, and the skewness tensor are given by the same formulae, but for the complete lift of $\psi$.

The above formulae for the lifted metric, the lifted connection, and the lifted skewness tensor were calculated `by hand'. We can, however, consider also higher lifts of the statistical model given by an exponential family. For any natural number $r$, the lift of the metric tensor to $\sT^r M$ reads  in coordinates
$$\left(g_{kl}(x)\,\xd x^k\otimes \xd x^l\right))^{c}=\sum_{\mu,\nu\geq 0}g_{kl}^{(r-\mu-\nu)}(x)\,\xd x^{k}_\mu\otimes \xd x^{l}_\nu\,,$$
which means that
$$g^{(r)}(\partial_{x^k_\mu}, \partial_{x^l_\nu})=g_{kl}^{(r-\mu-\nu)}\,.$$
We use the convention according to which $f^{(p)}=0$ if $p<0$.
For $g$ given in the privileged coordinates by the Hessian of a function $\psi$, we get
$$g^{(r)}(\partial_{x^k_\mu},\partial_{x^l_\nu})=\left(\frac{\partial^2\psi}{\pa x^k\partial x^l}\right)^{\!(r-\mu-\nu)}=\frac{\partial^2\psi^c}{\pa x^k_\mu\partial x^l_\nu}\,,$$
which means that the lifted metric is given by the Hessian of the lifted function. The same is valid for the Levi-Civita connection and the skewness tensor -- both are defined with the use of $\psi^{c}$ by the formulae
$$\Gamma^{c}_{(i,\mu)(j,\nu)(k,\lambda)}=\frac{1}{2}\frac{\partial^3 \psi^c}{\pa x^i_\mu\,\pa x^j_\nu\,\pa x^k_\zl}\,,
\qquad T^{c}_{(i,\mu)(j,\nu)(k,\lambda)}=\frac{\partial^3 \psi^c}{\pa x^i_\mu\,\pa x^j_\nu\,\pa x^k_\zl}\,.$$
\end{example}
\noindent Our next observation is that we can lift also statistical manifolds admitting torsion (SMATs).
\begin{theorem}
 If $(M,g,\nabla)$ is SMAT then $(\sT^rM,g^c,\nabla^c)$ is a SMAT.
\end{theorem}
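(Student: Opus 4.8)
The plan is to reduce the claimed identity to the vanishing of a single $(0,3)$-tensor and then recognize that tensor as the complete lift of the SMAT defect of $(M,g,\nabla)$, which vanishes by hypothesis. On any manifold carrying a metric and a connection the three assignments $(X,Y,Z)\mapsto(\nabla_Xg)(Y,Z)$, $(X,Y,Z)\mapsto(\nabla_Yg)(X,Z)$ and $(X,Y,Z)\mapsto g(\Tor^\nabla(X,Y),Z)$ are all $C^\infty(M)$-multilinear, so the defect
$$S(X,Y,Z):=(\nabla_Xg)(Y,Z)-(\nabla_Yg)(X,Z)+g(\Tor^\nabla(X,Y),Z)$$
is a genuine $(0,3)$-tensor, and $(M,g,\nabla)$ is a SMAT precisely when $S=0$. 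Applying the same formula on $\sT^rM$, with $g,\nabla,\Tor^\nabla$ replaced by $g^c,\nabla^c,\Tor^{\nabla^c}$, defines a $(0,3)$-tensor $\widetilde S$ on $\sT^rM$, and the goal becomes $\widetilde S=0$.

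First I would check the identity $\widetilde S=S^c$. Since $\widetilde S$ and $S^c$ are both tensors, and since the lifts $X_i^{(\zl)}$, $\zl=0,\dots,r$, of a local frame $X_1,\dots,X_n$ span $\sT\sT^rM$ by the generating lemma proved above, it suffices to evaluate both sides on triples $X^{(\zl)},Y^{(\zm)},Z^{(\zn)}$. For the two Codazzi terms I would use $\nabla^cg^c=(\nabla g)^c$, which is the case $K=g$ of (\ref{nablaK1}), together with the contraction rule (\ref{contraction}) applied to the $(0,3)$-tensor $\nabla g$, obtaining
$$(\nabla^c_{X^{(\zl)}}g^c)(Y^{(\zm)},Z^{(\zn)})=\big((\nabla_Xg)(Y,Z)\big)^{(\zl+\zm+\zn-2r)},$$
and the analogous expression for the swapped term.

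The torsion term is where the bookkeeping concentrates, and it is the step I expect to need the most care. Here I would invoke part 2 of Theorem \ref{statmlift}, namely $\Tor^{\nabla^c}=(\Tor^\nabla)^c$, and then apply the contraction rule (\ref{contraction}) \emph{twice}: once to the $(1,2)$-tensor $\Tor^\nabla$ to get $(\Tor^\nabla)^c(X^{(\zl)},Y^{(\zm)})=\big(\Tor^\nabla(X,Y)\big)^{(\zl+\zm-r)}$, and then to the metric $g$ in order to contract this lifted vector field against $Z^{(\zn)}$, which yields $\big(g(\Tor^\nabla(X,Y),Z)\big)^{(\zl+\zm+\zn-2r)}$. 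The only genuine subtlety is keeping the lift orders consistent through the two successive contractions; everything else is formal.

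Summing the three pieces then gives
$$\widetilde S(X^{(\zl)},Y^{(\zm)},Z^{(\zn)})=\big(S(X,Y,Z)\big)^{(\zl+\zm+\zn-2r)}=S^c(X^{(\zl)},Y^{(\zm)},Z^{(\zn)}),$$
so $\widetilde S=S^c$ on a generating family of vector fields, hence everywhere. Since $(M,g,\nabla)$ is a SMAT we have $S=0$, and because the complete lift is $\R$-linear this forces $S^c=0$, whence $\widetilde S=0$. Thus $(g^c,\nabla^c)$ is torsion coupled and $(\sT^rM,g^c,\nabla^c)$ is a SMAT, as claimed.
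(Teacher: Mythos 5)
Your proposal is correct and follows essentially the same route as the paper: both evaluate the torsion-coupling identity on the generating family $X^{(\zl)},Y^{(\zm)},Z^{(\zn)}$ and reduce each term to a $(\zl+\zm+\zn-2r)$-lift using $\nabla^c g^c=(\nabla g)^c$, the contraction rule, and $\Tor^{\nabla^c}=(\Tor^\nabla)^c$. Packaging the computation as the vanishing of the lifted defect tensor $S^c$ is a cosmetic difference only.
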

\begin{proof} Let $X,Y,Z$ be arbitrary vector fields on $M$. Since If $(M,g,\nabla)$ is a SMAT, we have
$$(\nabla_Xg)(Y,Z)-(\nabla_Yg)(X,Z)=-g\left(\Tor^\nabla(X,Y),Z\right)\,.$$
Hence (cf. (\ref{contraction}) and (\ref{connection})),
\beas &(\nabla^c_{X^{(\zl)}}\,g^c)\left(Y^{(\zm)},Z^{(\zn)}\right)-(\nabla^c_{Y^{(\zm)}}\,g^c)\left(X^{(\zl)},Z^{(\zn)}\right)\\
&=(\nabla_X\,g)^{(\zl)}\left(Y^{(\zm)},Z^{(\zn)}\right)-(\nabla_Y\,g)^{(\zm)}\left(X^{(\zl)},Z^{(\zn)}\right)\\
&=\big[(\nabla_X\,g)(Y,Z)-(\nabla_Y\,g)(X,Z)\big]^{(\zl+\zm+\zn-2r)}=-\big[g\left(\Tor^\nabla(X,Y),Z\right)\big]^{(\zl+\zm+\zn-2r)}\\
&=-g^c\left((\Tor^\nabla(X,Y))^{(\zl+\zm-r)},Z^{(\zn)}\right)=-g^c\left((\Tor^\nabla(X,Y))^{(\zl+\zm-r)},Z^{(\zn)}\right)\\
&=-g^c\left(\left(\Tor^\nabla\right)^c\left(X^{(\zl)},Y^{(\zm)}\right),Z^{(\zn)}\right)=
-g^c\left((\Tor^{\nabla^c})\left(X^{(\zl)},Y^{(\zm)}\right),Z^{(\zn)}\right)\,,
\eeas
so that  the pair $(g^c,\nabla^c)$ is torsion coupled.
\end{proof}
\begin{remark} In \cite{Peyghan2021} the authors also consider various lifts of tensor fields and connections associated with statistical manifold structures to the tangent bundle, but no explicit concept of the lifted statistical manifold is presented. The lifted metrics are variants of the induced \emph{Sasaki metric} on $\sT M$. The Sasaki metric is a canonical Riemannian metric on the tangent bundle of a Riemannian manifold equipped with an affine connection. It was originally discovered by Sasaki  \cite{Sasaki1958} and expanded on by Dombrowski \cite{Dombrowski1962}. An advantage of this approach is that the Sasaki metric is again Riemannian, but a disadvantage is that it seems not to be functorially related to any canonical lift of the corresponding contrast function. Also, there are infinitely many Sasaki-like Riemannian metrics on $\sT M$ associated with a Riemannian manifold $M$ (some of them were considered in \cite{Peyghan2021}), so any choice is arbitrary and therefore not functorial.
\end{remark}

\section{Lifts of contrast functions}
The main observation of this section is the following.
\begin{theorem}\label{main}
If $F:M\ti M\to\R$ is a contrast function on $M$, then $$F^c:\sT^r(M\ti M)=\sT^rM\ti\sT^rM\to\R$$ is a contrast function on $\sT^rM$. Moreover, the metric $g^{F^c}$, the connection $\nabla^{F^c}$, its dual $\left(\nabla^{F^c}\right)^*$, and the $(0,3)$-tensor $T^{F^c}$ induced by $F^c$ are the $r$-lifts of $g^F$, $\nabla^F$, $\left(\nabla^F\right)^*$, and $T^F$, respectively.
\end{theorem}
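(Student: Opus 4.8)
The plan is to reduce everything to three functorial properties of the lift and then transport the defining bracket formulae for $g^F$, $\nabla^F$, $T^F$ through $L_r$.

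First I would record the structural facts. The functor $\sT^r$ commutes with products, so $\sT^r(M\ti M)=\sT^rM\ti\sT^rM$ canonically and it sends the diagonal map $\zd_M:M\to M\ti M$ to the diagonal map $\zd_{\sT^rM}:\sT^rM\to\sT^rM\ti\sT^rM$; in particular $\zD_{\sT^rM}=\sT^r\zD_M$. The function lift is natural: for $\zf:N_1\to N_2$ smooth and $f\in C^\infty(N_2)$ one has $(f\circ\zf)^{(\zl)}=f^{(\zl)}\circ\sT^r\zf$, straight from the definition $f^{(\zl)}([\zg]_r)=\tfrac{1}{\zl!}\tfrac{\xd^\zl(f\circ\zg)}{\xd t^\zl}(0)$. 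Finally, partial lifts respect the product decomposition, $({}^1\!X)^{(\zl)}={}^1(X^{(\zl)})$ and $({}^2Y)^{(\zl)}={}^2(Y^{(\zl)})$. Granting these, iterating the vector-field rule (\ref{vf}) of Theorem \ref{t6} on $N=M\ti M$ (applied to $F^c=F^{(r)}$) produces the master identity
$$F^c[X_1^{(\zl_1)}\cdots X_k^{(\zl_k)}\,|\,Y_1^{(\zm_1)}\cdots Y_l^{(\zm_l)}]=\big(F[X_1\cdots X_k\,|\,Y_1\cdots Y_l]\big)^{(\zs-(k+l-1)r)},$$
with $\zs=\sum_i\zl_i+\sum_j\zm_j$, the left-hand bracket being formed on $\sT^rM\ti\sT^rM$.

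Next I would verify that $F^c$ is a contrast function. In the local form of Theorem \ref{thcontrast}, $F=\tfrac12 u^iu^j h_{ij}$ with $u^i=x^i-y^i$, the generalized Leibniz rule for $(\cdot)^{(r)}$ writes $F^c$ as a sum of terms each carrying a factor $u^i_\za u^j_\zb$, where $u^i_\za=(x^i)^{(\za)}-(y^i)^{(\za)}$ are transverse coordinates cutting out $\zD_{\sT^rM}$. Hence $F^c|_{\zD_{\sT^rM}}=0$, and a single differentiation always leaves at least one such factor, so $\xd F^c|_{\zD_{\sT^rM}}=0$; thus the first jet of $F^c$ vanishes on the diagonal and $g^{F^c}$ is a well-defined symmetric tensor by Remark \ref{rem}. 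Taking $k=l=1$ in the master identity and evaluating at $(p,p)=\sT^r\zd_M(p)$ via naturality gives $g^{F^c}(X^{(\zl)},Y^{(\zm)})=(g^F(X,Y))^{(\zl+\zm-r)}$, which equals $(g^F)^c(X^{(\zl)},Y^{(\zm)})$ by the contraction formula (\ref{contraction}). Since the $X_i^{(\zl)}$ generate $\sT\sT^rM$ (by the spanning lemma above) and $g^F$ is non-degenerate, $g^{F^c}=(g^F)^c$ is non-degenerate and $F^c$ is a genuine contrast function.

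The connection, its dual, and the cubic form follow by the same device. With $k=2,l=1$ the master identity and diagonal evaluation give $g^{F^c}(\nabla^{F^c}_{X^{(\zl)}}Y^{(\zm)},Z^{(\zn)})=(g^F(\nabla^F_XY,Z))^{(\zl+\zm+\zn-2r)}$, while the connection-lift formula (\ref{connection}) with (\ref{contraction}) yields the same value for $(g^F)^c((\nabla^F)^c_{X^{(\zl)}}Y^{(\zm)},Z^{(\zn)})$; non-degeneracy of $g^{F^c}$ and the spanning property then force $\nabla^{F^c}=(\nabla^F)^c$. For the dual, the swap $\zt(x,y)=(y,x)$ lifts to the swap on $\sT^rM\ti\sT^rM$, so naturality yields $(F^c)^*=(F^*)^c$; with $(\nabla^{F^c})^*=\nabla^{(F^c)^*}$ and the connection case applied to $F^*$ this gives $(\nabla^{F^c})^*=((\nabla^F)^*)^c$. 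Finally $F^c-(F^c)^*=(F-F^*)^c$; the local form (\ref{lfc2}) shows $F-F^*$ vanishes to third order along $\zD_M$, and the Leibniz argument above then shows $(F-F^*)^c$ vanishes to third order along $\zD_{\sT^rM}$, so its second jet vanishes on the diagonal and $T^{F^c}$ is well defined. The master identity with $k=1,l=2$ now gives $T^{F^c}(X^{(\zl)},Y^{(\zm)},Z^{(\zn)})=(T^F(X,Y,Z))^{(\zl+\zm+\zn-2r)}=(T^F)^c(X^{(\zl)},Y^{(\zm)},Z^{(\zn)})$, completing all four identifications.

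The step I expect to fight hardest for is the product-compatibility $({}^1\!X)^{(\zl)}={}^1(X^{(\zl)})$ underpinning the master identity: the intermediate partial lifts mix fibre coordinates of every order, so it is not a priori clear that the lift of a field living on the first factor again lives on the first factor. I would settle this from naturality of the lift with respect to the two projections $\pr_1,\pr_2:M\ti M\to M$ — each ${}^i\!X$ being $\pr_i$-related to $X$ and $\pr_{3-i}$-related to $0$ — or, for the complete lift, from the flow identity $\sT^r(\zf_t\ti\mathrm{id})=\sT^r\zf_t\ti\mathrm{id}$ together with its graded refinement for the $\zl$-lifts.
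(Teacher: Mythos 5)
Your proposal is correct and follows essentially the same route as the paper: the local normal form of Theorem \ref{thcontrast} combined with the Leibniz rule to establish that $F^c$ is a contrast function with $g^{F^c}=(g^F)^c$, and then the iterated application of (\ref{vf}) together with $^i(X^{(\zl)})=({}^i\!X)^{(\zl)}$ --- your ``master identity'' is exactly the computation the paper performs for the cases $k+l=3$ --- to identify $\nabla^{F^c}$, $(\nabla^{F^c})^*$ and $T^{F^c}$ with the corresponding lifts. The only difference is presentational: you isolate the functorial ingredients (naturality under $\pr_i$ and the diagonal) that the paper treats as ``easy to see,'' which is a reasonable way to justify the product-compatibility step.
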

\begin{proof}
Let $(x^i)$ be local coordinates on $M$ in a neighbourhood of $p\in M$, $x^i(p)=0$. Let $(y^i)$ be the same coordinates on another copy of $M$, so that $(x^i,y^j)$ are local coordinates in a neighbourhood of $(p,p)\in M\ti M$. Let $(\bar x,\bar y)$ be the adapted coordinates $\bar x^i=(x^i_0,\dots,x^i_r)$ and $\bar y^j=(y^j_0,\dots,y^j_r)$ on $\sT^rM\ti\sT^rM$. According to Theorem \ref{thcontrast}, as $F$ is a contrast function in a neighbourhood of $(p,p)$, we can write $F$ in the form
$$
F(x,y)=\frac{1}{2}\,(x^i-y^i)(x^j-y^j)\,h_{ij}(x,y)\,,
$$
where $h_{ij}=h_{ji}$ and $[h_{ij}(0,0)]$ is an invertible matrix, i.e. $[h_{ij}(x,y)]$ is invertible in a neighbourhood of $(0,0)$. We have then (cf. (\ref{Lr}) and Proposition \ref{hl})
$$F^c(\bar x,\bar y)=\frac{1}{2}\sum_{\zl+\zm+\zn=r}\,(x^i_\zl-y^i_\zl)(x^j_\zm-y^j_\zm)\,(h_{ij})^{(\zn)}(\bar x,\bar y)\,.$$
As in the proof of Theorem \ref{thcontrast}, this implies that the first jets of $F^c$ vanish on the diagonal $\zD_{\sT^rM}$. To finish the proof, it suffices to show that the matrix $[h_{(i,\zl)(j,\zm)}(\bar x,\bar x)]$ is invertible for points $\bar x$ whose projections on $M$ are sufficiently close to $p$, where $h_{(i,\zl)(j,\zm)}=(h_{ij})^{(r-\zl-\zm)}$. This is equivalent to the fact that the matrix $[h_{(i,\zl)(j,\zm)}(\bar x,\bar y)]$ is invertible for $(\bar x,\bar y)$ in a neighbourhood of $(\bar 0,\bar 0)$. This, in turn, is equivalent to the fact that the tensor field
$$H(\bar x)=h_{(i,\zl)(j,\zm)}(\bar x,\bar x)\,\xd x^i_\zl\ot \xd x^j_\zm$$
is a pseudo-Riemannian metric in a neighbourhood of $p\in M$. But
$$H(\bar x)=\left(h_{ij}^{(r-\zl-\zm)}\,\xd x^i_\zl\ot \xd x^j_\zm\right)(\bar x,\bar x)=\left(h_{ij}\,\xd x^i\ot \xd x^j\right)^c(\bar x,\bar x)\,.$$
Since $F$ is a contrast function, the tensor $h(x)=h_{ij}(x,x)\,\xd x^i\ot \xd x^j$ is the pseudo-Riemannian metric $g^F$, so $H$ is its complete lift, that proves that $F^c$ is a contrast function and $g^{F^c}=\left(g^F\right)^c$.
It is obvious that $\left(F^c\right)^*=\left(F^*\right)^c$. The connection $\nabla^{F^c}$ and the skewness tensor $T^{F^c}$ are defined as
\beas
& g^{F^c}\left(\nabla^{F^c}_{X^{(\zl)}}\,Y^{(\zm)},Z^{(\zn)}\right)(\bar x)=-F^c[X^{(\zl)}\,Y^{(\zm)}|\,Z^{(\zn)}](\bar x,\bar x)\,,\\
& T^{F^c}(X^{(\zl)},Y^{(\zm)},Z^{(\zn)})(\bar x)=(F^c-\left(F^c\right)^*)[X^{(\zl)}|\,Y^{(\zm)}\,Z^{(\zn)}](\bar x,\bar x)\,.
\eeas
It is easy to see that $^i(X^{(\zl)})=(^i\!X)^{(\zl)}$ for $i=1,2$. We have then (cf. Theorem \ref{t6})
\beas &-F^c[X^{(\zl)}\,Y^{(\zm)}|\,Z^{(\zn)}](\bar x,\bar x)=(^1\!X)^{(\zl)}\,(^1Y)^{(\zm)}\,(^2Z)^{(\zn)}\,(F^c)(\bar x,\bar x)\\
&=(^1\!X\,^1Y\,^2Z\,(F))^{(\zl+\zm+\zn-2r)}(\bar x,\bar x)=\left(g^F(\nabla_X\,Y,Z)\right)^{(\zl+\zm+\zn-2r)}(\bar x)\\
&=\left(g^F\right)^c\left((\nabla^F_X\,Y)^{(\zl+\zm-r)},Z^{(\zn)}\right)(\bar x)=g^{F^c}\left(\left(\nabla^{F}\right)^c_{X^{(\zl)}}\,Y^{(\zm)},Z^{(\zn)}\right)(\bar x)\,.
\eeas
This proves $\left(\nabla^{F}\right)^c=\nabla^{F^c}$. Similarly one can prove $\left(\nabla^{F^*}\right)^c=\left(\nabla^{F^c}\right)^*$.
For the skewness tensor we have
\beas &\left(F^c-\left(F^c\right)^*\right)[X^{(\zl)}|\,Y^{(\zm)}\,Z^{(\zn)}](\bar x,\bar x)=(^1\!X)^{(\zl)}\,(^2Y)^{(\zm)}\,(^2Z)^{(\zn)}\,\left(F^c-\left(F^c\right)^*\right)(\bar x,\bar x)\\
&=(^1\!X\,^2Y\,^2Z\,(F-F^*))^{(\zl+\zm+\zn-2r)}(\bar x,\bar x)=[T^{F}(X,Y,Z)]^{(\zl+\zm+\zn-2r)}(\bar x)\\
&=\left(T^F\right)^c\left(X^{(\zl)},Y^{(\zm)},Z^{(\zn)}\right)\,,
\eeas
that shows $T^{F^c}=\left(T^F\right)^c$.
\end{proof}
An obvious adaptation of the above proof, this time with the use of Theorem \ref{thcontrast1}, gives immediately the following.
\begin{theorem}
If $F:N\to\R$ is a contrast function on a closed submanifold $N_0\subset N$, then $F^c:\sT^rN\to\R$ is a contrast function on the closed submanifold $\sT^rN_0\subset\sT^rN$.
\end{theorem}
\begin{example}
The tangent lift of the Kullback-Leibler divergence (\ref{KL}) reads
$$D_{KL}^{(1)}(x,y,\dot x,\dot y)=\int_\zW\,\left(\frac{\pa\,\log p(\zx,x)}{\pa x^k}\bigg[\log\frac{p(\zx,x)}{p(\zx,y)}+1\bigg]\cdot\dot x^k-\frac{\pa\,\log p(\zx,y)}{\pa y^k}\cdot\dot y^k\right)\,p(\zx,x)\,\xd\,\zx\,.
$$
\end{example}
\noindent It is a matter of direct calculations to see that the pseudo-Riemannian metric $g^{D_{KL}^{(1)}}$ is the tangent lift of the Fisher-Rao metric described in Example \ref{tlFR}.

\begin{remark} Already after publishing the first version of this paper in \emph{arXiv}, our attention was turned to the paper \cite{Matsuzoe2003}.  The authors define there lifts of statistical manifold structures analogous to ours, but only to tangent bundles. Using horizontal lifts, they obtain also lifted statistical manifolds, this time with Riemannian metrics. The metrics are actually Sasaki metrics. The problem is that the Sasaki metrics (also those associated with an extra connection) are not functorially associated with lifts of the corresponding contrast functions, thus is not clear how to obtain contrast functions for the structures lifted in this way. Moreover, since in the presence of a connection, the tangent space $\sT_{v_x}\sT M$ can be identified with
$\sT_xM\op\sT_xM$, there are infinitely many possibilities to lift tensor fields from $M$ to $\sT M$. In particular, if the scalar product in $\sT_xM$ is $g$, we can take as a bilinear form on $\sT_{v_x}\sT M=\sT_xM\op\sT_xM$ any of the forms
$$\langle(v,v')\,|\,(w,w')\rangle=a\,g(v,w)+b\,g(v,w')+c\,g(v',w)+d\,g(v',w')\,.$$
The requirement that $\langle\cdot|\cdot\rangle$ is positively definite still offers infinitely many $a,b,c,d\in\R$ at our disposal. It is then clear that choosing one of them is not functorial. We get the Sasaki metric for $a=d=1$ and $b=c=0$. Another point is that the Sasaki construction does not work for lifts of metrics on arbitrary vector bundles (thus for Lie algebroid statistical structures considered in the next section), so it is useless for our purposes.
\end{remark}
\section{Lifting statistical structures on Lie algebroids}

\subsection{Homogeneity structures}
For a vector bundle $\zt:E\to M$ with the typical fiber $V$, \emph{affine coordinates} on $E$ associated with a local trivialization $\zt^{-1}(U)=U\ti V$ are $(x^a,y^i)$, where $x^a$ are local coordinates on $U\subset M$ and $y^i$ are linear coordinates in $V$, associated with a basis $e_*^i$ of $V^*$. They induce dual coordinates $(x^a,\zx_i)$ in the dual bundle $\zp:E^*\to M$, where $\zx_i$ are linear coordinates in the dual vector space $V^*$ associated with the basis $e_i$ dual to $e^i_*$.
In fact, there is a one-to-one correspondence $X\mapsto \zi_X$ between sections $X$ of $E$ and linear functions on $E^*$, given by $\zi_X(\zw)=\zP(X_{\zp(\zw)},\zw)$, where $\zP=\la\cdot,\cdot\ran:E\ti_ME^*\to\R$ is the canonical pairing,
$$\zP(x^a,y^i,\zx_j)=y^i\zx_i\,.$$
In affine coordinates, if $X=f^i(x)e_i$, then $\zi_X=f^i(x)\zx_i$. In particular, $\zx_i=\zi_{e_i}$ and $y^i=\zi_{e^i_*}$.

On any vector bundle $\zt:E\to M$ there is a canonical vector field $\n_E$ (called the \emph{Euler vector field}) being the generator
of the one-parametr group of diffeomorphisms $\R\ni t\mapsto h^E_{e^t}$ of $E$, where $h^E_s$ is the multiplication by $s$ in the vector bundle $E$ (the \emph{homogeneity structure} on $E$). In affine local coordinates, $h^E_s(x^a,y^i)=(x^a,sy^i)$ and
$\n_E=\sum_iy^i\,\pa_{y_i}$. A fundamental result in \cite{Grabowski:2009} states that any vector bundle is completely determined by its Euler vector field (or the homogeneity structure $h^E$). In this language, morphisms of vector bundles are just smooth maps intertwining the multiplications by reals in both vector bundles (or, equivalently, relating the corresponding Euler vector fields), and vector subbundles are just submanifolds which are invariant with respect to the multiplication by reals (or, equivalently, submanifolds to which the Euler vector field is tangent).

The Euler vector field determines naturally a general concept of homogeneity on $E$:  a tensor field $K$ on $E$ is \emph{homogeneous of weight $w\in\R$} if ${\Li}_{\n_E}(K)=w\,K$. In particular, affine coordinates are homogeneous, $x^a$ are of weight 0, and $y^i$ are of weight 1. Homogeneous functions of weight 1 are called also
\emph{linear}, and in affine coordinates $(x^a,y^i)$ take the form $\sum_if_i(x)y^i$. More generally,  \emph{linear $n$-forms} are $n$-forms of weight 1. Analogously, \emph{linear $n$-vector fields} are $n$-vector fields of weight $(n-1)$. In particular, an $n$-vector field $\zL$ on $E$ is linear if and only if the corresponding $n$-bracket of functions,
$$\{ g_1,\dots,\cdot g_n\}_\zL=\zL(\xd g_1,\dots,\xd g_n)\,,$$
is closed on linear functions.

\medskip\noindent Note that
$$h^E:\R\ti E\to E\,,\ h^E(s,\cdot)=h^E_s$$ is a particular example of a general \emph{homogeneity structure} \cite{Grabowski2012}, which is just a smooth action $h^F:\R\ti F\to F$ of the monoid $(\R,\cdot)$ of multiplicative reals on a manifold $F$ , i.e., $h^F_s\circ h^F_u=h^F_{su}$, $h^F_1=\id_F$. The corresponding analog of the Euler vector field is the \emph{weight vector field} $\n_F$. The fundamental result of \cite{Grabowski2012} says that with any homogeneity structure $h^F$ on $F$ there is associated a fiber bundle structure $\zt=h^F_0:F\to M=h^F_0(F)$ ($M$ is automatically a submanifold in $F$) with the typical fiber $\R^p$, $p\in\N$. Moreover, for a neighbourhood $U$ of any $p\in M$, there are homogeneous coordinates $(x^a,y^i)$ in $\zt^{-1}(U)$ such that $h^F_s(x^a,y^i)=(x^a,s^{w_i}y^i)$, where $w_i$ are positive integers. The corresponding weight vector field reads
$$\n_F=\sum_iw_i\cdot y^i\,\pa_{y^i}\,.$$
The transition maps for such systems of homogeneous coordinates are necessarily polynomial in variables $y^i$ \cite{Grabowski2012}, so the system of weights $(w_i)$ is uniquely determined globally up to permutations for a given homogeneity structure. Manifolds $F$ equipped with a homogeneity structure are called in \cite{Grabowski2012} \emph{graded bundles} (see also \cite{Bruce2016}), since they induce fiber bundle structures $\zt:F\to M$ . The largest $w_i$ is called the \emph{degree} of the homogeneity structure, so that vector bundles are just homogeneity structures of degree 1.
\begin{example}[\cite{Grabowski2012}]
Higher tangent bundles $\sT^rM$ are canonically graded bundles. The corresponding weight vector field in adapted coordinates $(x^i_\zl)$ reads
$$\n_{\sT^rM}=\sum_i\sum_{\zl=0}^r\zl\cdot x^i_\zl\,\pa_{x^i_\zl}=\sum_i\sum_{\zl=1}^r\zl\cdot x^i_\zl\,\pa_{x^i_\zl}\,.$$
In other words, the coordinate $x^i_\zl$ is homogeneous of weight $\zl$. The corresponding homogeneity structure
is $h^{\sT^rM}_s(x^i_\zl)=(s^\zl x^i_\zl)$. In a more intrinsic formulation (see \cite{Grabowski2012}), for a curve $\zg$ in $M$ we have
$$h^{\sT^rM}_s([\zg]_r)=[s.\zg]_r\,,\ \text{where}\ (s.\zg)(t)=\zg(st)\,.$$
\end{example}
\noindent The compatibility of various geometric structures with a given graded bundle (homogeneity structure) is considered in \cite{Grabowska2021}. Particular cases are \emph{weighted Lie algebroids} \cite{Bruce2016,Grabowska2021}
and \emph{weighted groupoids} \cite{Bruce2015,Grabowska2021}, being natural generalizations of the concepts of \emph{$VB$-algebroids} and \emph{$VB$-groupoids}, intensively studied recently in the literature (see e.g. \cite{Bursztyn2016,Mackenzie1992}).

\subsection{Higher lifts of vector bundles and their sections}\label{lvb}
An important observation is that homogeneity structures can be canonically lifted to higher tangent bundles \cite{Bruce2016,Grabowski2012}. More precisely, if $h^F$ is a homogeneity structure of degree $k$ on a graded bundle $F\to M$, then $\dtr h^F$, with $(\dtr h^{F})_s=\sT^rh^F_s$, is a homogeneity structure  of the same degree $k$ on the graded bundle $\sT^rF\to\sT^rM$. Moreover, the homogeneity structures $h^{\sT^rF}$ and $\dtr h^F$ are \emph{compatible}, i.e.
$$h^{\sT^rF}_s\circ (\dtr h^F)_u=(\dtr h^F)_u\circ h^{\sT^rF}_s\,.$$
Indeed, for a curve $\zg$ in $F$, we have $h^{\sT^rF}_s([\zg]_r)=[s.\zg]_r$, and for the action of the lifted homogeneity structure we have
$$(\dt^rh^F)_u([\zg]_r)=\sT^rh_u^F([\zg]_r)=[h_u^F\circ\zg]_r\,.$$
In consequence,
\be\label{compat}\left(h^{\sT^rF}_s\circ (\dtr h^F)_u\right)([\zg]_r)=\left((\dtr h^F)_u\circ h^{\sT^rF}_s\right)([\zg]_r)
=[h_u\circ\zg(st)]_r\,.\ee

\medskip\noindent
A manifold equipped with two compatible homogeneity structures we call a \emph{double graded bundle}. Double vector bundles (see \cite{Konieczna1999,Mackenzie1992,Pradines1974}) are particular examples, for which both graded bundle structures are of degree 1.
This implies that the higher tangent bundle $\sT^rE$ of a vector bundle $\zt:E\to M$ is canonically also a vector bundle $\sT^r\zt:\sT^rE\to\sT^rM$ with the homogeneity structure $\dtr h^E$. The corresponding Euler vector field is the complete lift $(\n_E)^c$ of the Euler vector field $\n_E$ on $E$. For affine coordinates $(x^a,y^i)$ on $E$ and the adapted coordinates $(x^a_\zl,y^i_\zn)$ on $\sT^rE$ we have (see \cite{Grabowska2021})
$$(\n_E)^c=\sum_{i,\zn}y^i_\zn\,\pa_{y^i_\zn}\,,$$
so the induced affine coordinates on $\sT^rE$ are $(x^a_\zl,y^i_\zn)$.
We obtain also a double graded bundle structure on $\sT^rE$ in which one structure is of degree 1, and the commutative diagram
\be\label{dgb}\xymatrix{
\sT^rE\ar[rr]^{\sT^r\zt} \ar[d]^{\zt^r_E} && \sT^rM\ar[d]^{\zt^r_M} \\
E\ar[rr]^{\zt} && M \,.}
\ee
The two compatible homogeneity structures are $h^{\sT^rE}$ and $\dt^rh^E$.
Similarly, starting from the dual affine coordinates $(x^a,\zx_i)$ in the dual bundle $\zp:E^*\to M$, we obtain the induced affine coordinates $(x^a_\zl,(\zx_i)_\zn)$ in the vector bundle $\sT^r\zp:\sT^rE^*\to\sT^rM$. An important observation is that we have a non-degenerate pairing between the  vector bundle structures of $\sT^rE$ and $\sT^rE^*$,
$$\zP^c=\la\cdot,\cdot\ran^{(r)}:\sT^rE\ti_{\sT^rM}\sT^rE^*\to\R\,,$$
represented by the complete lift of the canonical pairing
$\zP:E\ti_ME^*\to\R$. In local coordinates $(x^a,y^i,\zx_j)$ on $E\ti_ME^*$, the original pairing reads $\zP(x^a,y^i,\zx_j)=y^i\,\zx_i$, thus the the lifted pairing is (cf. (\ref{Lr}))
$$\zP^c(x^a_\zl, y^i_\zn,(\zx_j)_\zm)=y^i_\zn\cdot(\zx_i)_{r-\zn}\,.$$
In other words, the vector bundle $\sT^r\zp$ is dual to $\sT^r\zt$, and the coordinates dual to \\ $(y^i_0,\dots,y^j_1\dots,y^k_{r-1},y^l_r)$ are $\left((\zx_i)_r,(\zx_j)_{r-1},\dots,(\zx_k)_{1},(\zx_l)_0\right)$.

\medskip\noindent
For any vector bundle $\zt:E\to M$ we have canonical transformations (see \cite{Wamba2012})
$$\zq^{[\zl]}_E:\sT^rE\to\sT^rE\,,\quad \zq^{[\zl]}_E([\zg]_r)=[t^\zl\,\zg]_r\,,\quad \zl=0,1,\dots,r\,,
$$
where, for a curve $\zg:\R\to E$, the curve $(t^\zl\,\zg)$ is understood as $t\mapsto t^\zl\,\zg(t)$. The maps $\zq^{[\zl]}_\R:\sT^r\R\to\sT^r\R$ we will denote simply $\zq^{[\zl]}$. In particular, if $(x_\zn)$ are the adapted coordinates in $\sT^r\R\simeq \R^{r+1}$, then
$$x_\zm\left(\zq^{[\zl]}([\zg]_r)\right)=\frac{1}{\zm!}\frac{\xd^\zm}{\xd t^\zm}\,\bigg|_{t=0}(t^\zl\,\zg)=\frac{1}{\zm!}\dbinom{\zm}{\zl}\,\zl!\,\frac{\xd^{(\zm-\zl)}\zg}
{\xd\,t^{(\zm-\zl)}}(0)=x_{\zm-\zl}([\zg]_r)\,,$$
where $x_n=0$ for $n<0$, so that
$$\zq^{[\zl]}(x_0,\dots,x_r)=(0,\dots,0,x_0,\dots,x_{r-\zl})\,.$$
Moreover, the lifts of functions on $M$ to functions of $\sT^rM$ (cf. (\ref{fl})) can be described as
\be\label{flambda}f^{(\zl)}=\zq^{[r-\zl]}\circ\sT^r f\,.\ee
Indeed,
$$f^{(\zl)}([\zg]_r)=\frac{1}{\zl!}\left[\frac{\xd^{\zl}(f\circ\zg)}{\xd\,t^{\zl}}\right]_{t=0}
=\zq^{[r-\zl]}([f\circ\zg]_r)=\left(\zq^{[r-\zl]}\circ\sT^rf\right)([\zg]_r)\,.$$
This suggests that, for the sake of simplicity of notation, it could be reasonable to consider also $\zq^{(\zl)}_E=\zq^{[r-\zl]}_E$.

\medskip\noindent It is easy to see that the maps $\zq^{[\zl]}_E$ are morphisms of the double graded bundle (\ref{dgb}), i.e. they commute with $h^{\sT^rE}_s$ and $(\dtr h^E)_u$. Indeed, in view of (\ref{compat}), we have
$$\zg^{[\zl]}_E([u\,\zg(st)]_r)=[t^\zl\,u\,\zg(st)]_r\,,$$
which clearly does not depend on the order in which we apply $u,s$, and the multiplication by $t^\zl$. Since morphisms of vector bundles are smooth maps intertwining the corresponding multiplications by reals, it is also easy to see that for a morphism $\zF:E\to E'$ of vector bundles we have
\be\label{vbmchi}
\zq^{[\zl]}_{E'}\circ\sT^r\zF=\sT^r\zF\circ\zq^{[\zl]}_E\,.
\ee

\medskip\noindent
Let now $X:M\to E$ be a section of $E$. We define the sections $X^{[\zl]}$ and $X^{(\zl)}$ of the vector bundle $\sT^rE\to\sT^rM$ by (see \cite{Gancarzewicz1994,Wamba2012})
\be\label{ls}X^{[\zl]}=\zq^{[\zl]}_E\circ\sT^rX\,,\quad \zl=0,1,\dots,r\,,\ee
and $X^{(\zl)}=X^{[r-\zl]}$.
In other words,
$$X^{[\zl]}([\zg_M]_r)=[t^\zl\,X\circ\zg_M]_r\,.$$
In particular (see (\ref{flambda})), for a function on $M$ (i.e., a section of $M\ti\R\to M$) we have $f^{[\zl]}=f^{(r-\zl)}$, so that the new meaning of $f^{(\zl)}$ coincides with the one defined in Section \ref{s6}.
Another way of characterizing the lifts $X^{[\zl]}$ of sections of $E$ is the following.
\begin{proposition}
If $X$ is a section of a vector bundle $\zt:E\to M$, then $X^{[\zl]}$ is the unique section of the vector bundle $\sT^r\zt:\sT^rE\to\sT^rM$ such that
\be\label{iXl}\zi_{X^{[\zl]}}=(\zi_X)^{(r-\zl)}=(\zi_X)^{[\zl]}\,.\ee
Moreover, if $\zw$ is a section of $E^*$, then
\be\label{pairing} \langle X^{[\zl]},\zw^{[\zm]}\rangle=\zP^c\circ(X^{[\zl]},\zw^{[\zm]})=\langle X,\zw\rangle^{[\zl+\zm]}=\la X,\zw\rangle^{(r-\zl-\zm)}\,.
\ee
\end{proposition}
\begin{proof} Let $\zg:\R\to E^*$ and $\zg_M=\zp\circ\zg$. We have
\beas\displaystyle &\zi_{X^{[\zl]}}([\zg]_r)=\zP^c\left(X^{[\zl]}([\zg_M]_r),[\zg]_r\right)=
\zP^c\left([t^\zl\,X\circ\zg_M]_r,[\zg]_r)\right)=\zP^c\left([(t^\zl\,X\circ\zg_M,\zg)]_r\right)\\
&\displaystyle
=\frac{1}{r!}\frac{\xd^{r}}{\xd\,t^{r}}\,\bigg|_{t=0}\left(t^\zl\,\zi_X\circ\zg\right) =\frac{1}{r!}\dbinom{r}{\zl}\,\zl!\frac{\xd^{r-\zl}}{\xd\,t^{r-\zl}}\,\bigg|_{t=0}\left(\zi_X\circ\zg\right)
=(\zi_X)^{(r-\zl)}([\zg]_r)\,.
\eeas
\noindent As for the pairing, we have
\beas &\la X^{[\zl]},\zw^{[\zm]}\ran^{(r)}([\zg]_r)=\left(\zi_{X^{[\zl]}}\circ \zw^{[\zm]}\right)([\zg]_r)
=\left((\zi_X)^{[\zl]}\,([t^\zm\,\zw\circ\zg]_r)\right)=[t^\zl\,\zi_X(t^\zm\,\zw\circ\zg)]_r\\
&=[t^{\zl+\zm}\zi_X\circ\zw\circ\zg]_r=\la X,\zw\ran^{[\zl+\zm]}([\zg]_r)\,.
\eeas
\end{proof}
\noindent From the above proposition we easily get
\be\label{fX} (fX)^{[\zl]}=\sum_{\zm=0}^{r-\zl}f^{(\zm)}X^{[\zl+\zm]}\,.\ee
Moreover, if $X_1,\dots,X_k$ is a local basis of sections of $E$, and $\zw_1,\dots,\zw_k$ is the dual basis of sections of $E^*$, then the dual basis of $X_i^{[\zl]}$ is $\zw_i^{[r-\zl]}$.

\medskip\noindent The rule $$(T\otimes S)^{(\zl)}=\sum_{\zm=0}^\zl T^{(\zm)}\ot S^{(\zl-\zm)}\,,$$
applied already in Section \ref{s6}, will give us lifts of all $E$-tensor fields:
$$L^E_\zl:\mathscr{T}^q_p(E)\to\mathscr{T}^q_p(\sT^rE)\,,$$
where elements of $\mathscr{T}^q_p(E)$ are sections of the vector bundle $E^{\ot q}\ot(E^*)^{\ot p}$. The lifts $K^{(r)}$ we will call \emph{complete lifts} and denote $K^c$.
In view of (\ref{pairing}), we have a full analog of (\ref{fX}).
\begin{proposition}  If $K\in\mathscr{T}^q_p(E)$, then
\be\label{contraction1} K^{(\zl)}(X_1^{(\zm_1)},\dots,X_p^{(\zm_p)})=
\left(K(X_1,\dots,X_p)\right)^{(\zl+\zm-r\cdot p)}\,,
\ee
and
$$ K^{(\zl)}(\zw_1^{(\zn_1)},\dots,\zw_p^{(\zn_q)})=
\left(K(\zw_1,\dots,\zw_q)\right)^{(\zl+\zn-r\cdot q)}\,,
$$
where $\zm=\sum_i\zm_i$, $\zn=\sum_j\zn_j$, $X_1,\dots,X_p$ and $\zw_1,\dots\zw_q$ are sections of $E$ and $E^*$, respectively.
\end{proposition}

\begin{remark} If $E\to M$ is the tangent bundle $\zt_M:\sT M\to M$ (resp., the cotangent bundle $\zp_M:\sT^*M\to M$) and $X$ (resp., $\zw$) is its section, then we can
identify the lifted section $X^{[\zl]}:\sT^rM\to\sT^r\sT M$ (resp., $\zw^{[\zl]}:\sT^rM\to\sT^r\sT^* M$) with a vector field (resp., one-form) on $\sT^rM$, using the functor equivalences (\ref{fne})
$$\zk^r:\sT^r\circ\sT\to\sT\circ\sT^r\,,\quad \za^r:\sT^r\sT^*\to\sT^*\sT^r\,.$$
With these identifications, we will view $X^{[\zl]}$ as vector fields on $\sT^rM$ and $\zw^{[\zl]}$ as one-forms on $\sT^rM$. How they are related to the lifts of vector fields and one-forms defined in Section \ref{s6}?
We already know that the dual coordinates to fiber coordinates $(\dot x^i_\zm)$ on $\sT\sT^rM$ are $((p_i)_{r-\zm})$, so that the canonical symplectic form $\zw_{\sT^rM}$ on $\sT^*\sT^rM$ reads
$$\zw_{\sT^rM}=\xd (p_i)_\zm\we\xd x^i_{r-\zm}=(\zw_M)^c\,.$$
If $\{\cdot,\cdot\}_r$ is the corresponding Poisson bracket and $g:M\to\R$, then
$$X^{[\zl]}(g^{(\zm)})=\{\zi_{X^{[\zl]}},g^{(\zm)}\}_r=\{(\zi_X)^{(r-\zl)},g^{(\zm)}\}_r\,.$$
In particular, for $X=\pa_{x^j}$ and $g=x^i$, we get
$$(\pa_{x^j})^{[\zl]}(x^i_{\zm})=\{ (p_j)_{r-\zl},x^i_{\zm}\}_r=\zd^i_j\,\zd^{\zl}_\zm\,.$$
Hence, $(\pa_{x^j})^{[\zl]}=\pa_{x^i_\zl}=(\pa_{x^j})^{(r-\zl)}$, where we refer to the lifts $X^{(\zl)}$ introduced in Section \ref{s6}. In view of (\ref{fX}), in general $X^{[\zl]}=X^{(r-\zl)}$. The basis of sections dual to $((\pa_{x^i})^{[\zl]}=\pa_{x^i_\zl})$ is $(\xd x^i)^{[r-\zl]}$ (\ref{pairing}), so that $(\xd x^i)^{[\zl]}=\xd x^i_{r-\zl}$, and finally $\zw^{[\zl]}=\zw^{(r-\zl)}$. This shows that the lifts of sections $X^{(\zl)}=X^{[r-\zl]}$ coincide for vector fields and one-forms with the lifts defined in Section \ref{s6}.
In other words (cf. \cite{Wamba2012}), for a vector field $X$ and a one-form $\zw$ on $M$, we have
\be\label{nlifts}
X^{(\zl)}=\zk_M^r\circ\zq^{(\zl)}_{\sT M}\circ\sT^rX\,,\quad \zw^{(\zl)}=\ze_M^r\circ\zq_{\sT^*M}^{(\zl)}\circ\sT^r\zw\,.
\ee
Note, however, that our $\zq_E^{(\zl)}$ is $\zq_E^{(r-\zl)}$ in the notation of \cite{Wamba2012}.
\end{remark}

\subsection{Lie groupoids}
Let us first recall from \cite{Grabowska2019} what is a statistical structure on a Lie algebroid and what is a contrast function in this setting. For basic facts about Lie groupoids and Lie algebroids  we refer to \cite{Mackenzie2005,Meinrenken2017}.
\begin{definition} A \emph{Lie groupoid} is a manifold $G$ equipped with surjective	
submersions (target and source maps)  $\za,\zb:G\to M$ of $G$ onto a submanifold $M\subset G$ (we indicate this fact using the notation $G\rightrightarrows M$), and a partially defined associative product understood as a smooth map $m:G^{(2)}\to G$, where
$$G^{(2)}=\{(g,h)\in G\ti G\,|\, \zb(g)=\za(h)\}$$
(it is a smooth closed embedded submanifold of $G\ti G$). This means that the product $m(g,h)=gh$ is defined if and only if $\zb(g)=\za(h)$. We also assume that $g\zb(g)=g$ and $\za(h)h=h$, i.e., elements of $M$ are units for the multiplication. It is assumed additionally that there is a diffeomorphism (the inverse map),
$$\inv_G:G\to G\,,\ g\mapsto g^{-1}\,,$$
such that $gg^{-1}=\za(g)$ and $g^{-1}g=\zb(g)$.
By $\cF^\za(a)=\{ h\in G\,| \, \za(h)=a\}$ and $\cF^\zb(a)=\{ g\in G\,| \, \zb(g)=a\}$ we denote the target and source fibers ($\za$- and $\zb$-fibers), respectively. We define the left and right translations maps by
$$ l_g:\cF^\za(\zb(g))\to \cF^\za(\za(g))\,,\ l_g(h)=gh\,, \quad r_h:\cF^\zb(\za(h))\to\cF^\zb(\zb(h))\,,\ r_h(g)=gh\,.
$$
\end{definition}
\noindent Note that $l_g$ and $r_h$ are diffeomorphisms with the inverses $l_{g^{-1}}$ and $r_{h^{-1}}$, respectively. Writing a product $gh$ we automatically assume that $\zb(g)=\za(h)$. The foliations $\cF^\za$ and $\cF^\zb$ are associated with the involutive distributions $\ker(\sT\za)$ and $\ker(\sT\zb)$.

Of course, Lie groups are just Lie groupoids with $M$ being a single point (the unit). Another natural example is the \emph{pair groupoid} $G=M\ti M$. The corresponding target and source maps are the projections onto the first and onto the second factor, respectively, the multiplication is $(x,y)\bullet(y,z)=(x,z)$, the unit elements form the diagonal submanifold $\zD_M$, and the inverse is $\inv_G(x,y)=(y,x)$.

\subsection{Statistical Lie algebroids}
Like Lie algebras are the `infinitesimal parts' of Lie groups, the `infinitesimal parts' of Lie groupoids are \emph{Lie algebroids}.
There are many ways of defining Lie algebroids; we will mention two of them. The `standard' definition is the following.
\begin{definition} A \emph{Lie algebroid} is a vector bundle $ \tau: E\rightarrow M$ together with a Lie bracket $\left[.,. \right] $ on the space of its sections, such that there exists a vector bundle map ${\zr}:E\rightarrow\mathsf{T}M $ covering the identity on $M$, called the \emph{anchor map}, satisfying the Leibniz rule,
$$ \left[X, fY \right]=f\left[X,Y \right]+({\zr}(X) f) Y\,, $$ for all $X,Y\in\Sec(E)$,  $ f\in C^\infty(M)$.
\end{definition}
\begin{example} If $M$ is a manifold, then the tangent bundle $E=\sT M$ is canonically a Lie algebroid; the bracket of sections is the Lie bracket of vector fields, and the anchor is the identity map.
\end{example}

\begin{proposition}
\noindent A Lie algebroid structure on a vector bundle $E\to M$ is equivalent to fixing a linear Poisson tensor $\zL$ on $E^*$.
The Lie algebroid bracket $[\cdot,\cdot]$ and the anchor $\zr:E\to\sT M$ are related to the Poisson bracket $\{\cdot,\cdot\}_\zL$ associated with $\zL$ by
\be\label{vP}\zi_{[X,Y]}=\{\zi_X,\zi_Y\}_\zL\,,\quad
\zr(X)(f)\circ\zt=\{\zi_X,f\circ\zt\}_\zL\,.\ee
\end{proposition}
For Lie algebroids we can define the concepts of pseudo-Riemannian metrics, affine connections and their torsion, skewness tensors, etc., by an obvious analogy with the standard ones. We just replace vector fields (and the corresponding Lie bracket) with sections of $E$ (and the Lie algebroid bracket), their action on functions on $M$ by the action of sections \emph{via} the anchor map, $(p,q)$-tensors by sections of $E^{\ot p}\ot(E^*)^{\ot q}$, etc.

In particular,
a \emph{pseudo-Riemannian metric} on $E$ is a symmetric tensor field $g\in\Sec(E^\ast\ot E^*)$ such that $\Sec(E)\ni X \mapsto g(X,\cdot)\in\Sec(E^\ast)$ defines an isomorphism of vector bundles. In fact, this definition serves for all vector bundles, the Lie algebroid structure plays no r\^ole here. A vector bundle equipped with a pseudo-Riemannian metric we will call \emph{metric}. Analogously, a \emph{skewness tensor} on $E$ is a totally symmetric tensor $T\in\Sec((E^*)^{\ot 3})$.

\medskip\noindent  An \emph{(affine) connection ($E$-connection)} on  a Lie algebroid $E\to M$ is a $\R$-bilinear map (in fact, a differential operator)
$$ \nabla:\Sec(E)\times \Sec(E)\rightarrow \Sec(E)\,,\ (X,Y)\mapsto \nabla_X Y\,,$$
with the properties:
$$\nabla_{fX}Y = f\,\nabla_X Y\,, \ \nabla_X (fY) = f\nabla_X Y +{\zr}(X)(f)\,.$$
We will often call $E$-connections simply connections.
For the Lie algebroid $E=\mathsf{T}M$, connections on $E$ are standard affine connections.
Any connection $\n$ on $E$ can be extended to a covariant derivative acting on all $E$-tensor fields, as it is done in the classical situation.

\noindent The \emph{torsion} of a connection $\n$ on $E$ is the tensor field $\mathrm{Tor}^\nabla\in\Sec(\Lambda^2E^*\ot E)$ given by
$$\mathrm{Tor}^\nabla(X,Y)=\nabla_X Y-\nabla_Y X-[X,Y]\,.$$
Like in the standard differential geometry, for any metric Lie algebroid $(E,g)$ over $M$,
the dual connection $\n^*$ is defined completely analogously to the standard case,
$$g(\nabla^*_XY,Z)=\zr(X)\,(g(Y,Z))-g(Y,\nabla_XZ)\,.$$
One can prove that for any metric Lie algebroid $(E,g)$ there is a unique torsionless connection $\n^g$ (the \emph{Levi-Civita (or metric) connection}) such that $\n g=0$. There is a Koszul-like formula for $\n^g$, with the only difference with the standard one that sections $X$ of $E$ act on functions $f$ on $M$ \emph{via} the anchor map, $X(f)=\zr(X)(f)$.

There is a canonical one-to-one correspondence between torsionless connections $\n$ and skewness tensors $T$ on metric Lie algebroids given by
$$g\left(\nabla_XY,Z \right)=g\left(\nabla^g_XY,Z \right)-\frac{1}{2}T(X,Y,Z)\,.$$
In fact, like in the classical case we can obtain a one-parameter family of torsionless $E$-connections,
$$g\left(\nabla^\za_XY,Z \right)=g\left(\nabla^g_XY,Z \right)-\frac{\za}{2}T(X,Y,Z)\,,$$
where $\nabla^{-\za}=\left(\nabla^\za\right)^*$.
For metric Lie algebroids we have the full analogue of Theorem \ref{statm}.
All this suggests the following concept of a \emph{statistical Lie algebroid}.
\begin{definition}
Let $(E,g)$ be a metric Lie algebroid. A \emph{statistical structure} on $(E,g)$ is each of the following equivalent geometric structures:
\begin{itemize}
\item a skewness tensor $T\in\Sec\left((E^*)^{\ot 3}\right)$;
\item a torsionless Lie algebroid connection $\nabla$ such that $\nabla g$ is symmetric;
\item a torsionless Lie algebroid connection $\nabla$ such that $\nabla^g=\frac{1}{2}(\nabla+\nabla^*)$;
\item a pair $(\nabla,\nabla^*)$ of dual torsionless Lie algebroid connections.
\end{itemize}
\end{definition}
\noindent In the next section we explain how to understand contrast functions in the Lie algebroid setting.
\subsection{The Lie algebroid of a Lie groupoid}
To define the Lie algebroid $E=\Lie(G)\to M$, associated with a Lie groupoid $ G\rightrightarrows M$, as the vector bundle $\Lie(G)\to M$ we take the normal bundle,
$$\Lie(G)=E(M,G)=(\mathsf{T}G|_M)/\mathsf{T}M\,,$$
of $M$ in $G$. Let $X$ be a section of $\Lie(G)$ represented by a section $\bar X$ of $\sT G\,\big|_M$ (a vector field along $M$). It is easy to see that $(\sT\za-\sT\zb)(\bar X)$ does no depend on the choice of a representative $\bar X$, and so it is a vector field on $M$ which we denote $\zr(X)$. This defines a morphism $\zr:\Lie(G)\to\sT M$ which is the anchor of the Lie algebroid we are constructing.
A vector field $\tilde{X}$ on $G$ is called \emph{left-invariant} if it is tangent to the $\za$-fibers and invariant with respect to all left translations, $\sT l_g(\tilde X_h)=\tilde X_{gh}$. Similarly, $\tilde{X}$ is \emph{right-invariant} if it is tangent to the $\zb$-fibers and invariant with respect to the right translations.

\smallskip\noindent
By construction, the spaces $\mathfrak{X}^L(G)$ and $\mathfrak{X}^R(G)$ of left (resp., right) invariant fields on $G$ form Lie subalgebras in the Lie algebra of all vector fields. Since $\ker(\sT\za)|_M$ and $\ker(\sT\zb)|_M$ are complements to $ \mathsf{T}M $ in $ \mathsf{T}G|_M $, each of these bundles may be identified with the normal bundle $\Lie(G)$. Moreover,
it is easy to see that for any $X\in\Sec(\Lie(G))$ there is a unique left-invariant vector field $X^L\in\mathfrak{X}^L(G)$ and a unique right-invariant vector field  $X^R\in\mathfrak{X}^R(G)$ such that $X^L|_M$ and  $ X^R|_M$ represent $X$ in the normal bundle.
\begin{proposition}
For all $X\in\Sec(\Lie(G))$ we have
$$\sT\za(X^L)=0\,, \ \sT\zb(X^L)=\zr(X)\,, \ \sT\za(X^R)=-{\zr}(X)\,, \ \sT\zb(X^R)=0\,,\ \sT(\inv_G)(X^L)=-X^R\,.
$$
We have also $(fX)^L=f^LX^L\,, \  (fX)^R=f^RX^R\,,$ and
$$X^L(f^L)=({\zr}(X)f)^L\,,\  X^R(f^L)=0\,,\ X^L(f^R)=0\,,\  X^R(f^R)=-({\zr}(X)f)^R\,,$$
where $f^L=f\circ\zb$ and $f^R=f\circ\za$.
Moreover, there exists a unique Lie bracket $[\cdot,\cdot ]$ on $\Sec\left(\Lie(G)\right)$ such that (the reader will surely distinguish Lie algebroid brackets from the Lie bracket of vector fields)
$$[X^L,Y^L]=[X,Y]^L\,,\quad [X^R,Y^L]=0\,,\quad \text{and}\quad [X^R,Y^R]=-[X,Y]^R\,.
$$
\end{proposition}
\noindent The Lie algebroid structure on $\Lie(G)$ is determined by the anchor $\zr$ and the Lie bracket $[\cdot,\cdot ]$ defined above.

Note that not every Lie algebroid is of the form $\Lie(G)$ for a Lie groupoid $G$, but it is true if we allow for local Lie groupoids. This serves for our purposes, since it is enough to do differential calculus only in a neighbourhood of the submanifold $M$ of units in $G$.
\subsection{Contrast functions on Lie groupoids}
Our paper \cite{Grabowska2019} was motivated by the discovery that the concept of a contrast function on $M$ as a function on $M\ti M$, and its use in constructing statistical structures on $M$, refers only to the Lie groupoid structure on the pair groupoid $G=M\ti M$. The corresponding Lie algebroid $\Lie(M\ti M)$ is the canonical Lie algebroid $\sT M$ and the left (resp., right) invariant vector field corresponding to a vector field $X$ on $M$ is $X^L={ }^2\!X$ (resp., $X^R=-^1\!X$) in the notation of Section 4.

Let $G\rightrightarrows M$ be a (local) Lie groupoid, and $F:G\to\R$ be a contrast function on $(M,G)$ (cf. (\ref{Ncf})). According to
Remark \ref{rem}, the contrast function $F$ generates a (pseudo-Riemannian) metric $g^F$ on the normal bundle $E=\Lie(G)=E(M,G)$. This metric can be nicely described by means of invariant vector fields on $G$ (see \cite{Grabowska2019}). Namely, if $X,Y\in\Sec(\Lie(G))$, then
$$ g^F(X,Y)=X^LY^LF\,\big|_M=X^LY^RF\,\big|_M=X^RY^RF\,\big|_M\,.$$
Of course, if $F\ge 0$, then $g^F$ is positively defined. Moreover, $F$ is a contrast function on $(M,G)$ if and only if $F^*=F\circ\inv_G$ is a contrast function on $(M,G)$. In this case $g^F=g^{F^*}$. For the metric Lie algebroid $(\Lie(G),g^F)$, the rest of objects of the corresponding statistical Lie algebroid is defined as follows,
\beas & g(\nabla_X^F Y,Z)=X^LY^LZ^RF\,\big|_M\,,\quad  g(\nabla_X^{F^\ast} Y,Z)=X^LY^LZ^RF^*\,\big|_M\,, \\
& T^F(X,Y,Z)=X^R\,Y^L\,Z^L(F^*-F)\,\big|_M\,.
\eeas

\subsection{Lifting Lie groupoids and Lie algebroids}
It is well known that higher tangent bundles of Lie groupoids are canonically Lie groupoids themselves; we just apply the higher tangent functors. Let us start with a Lie groupoid $\za,\zb:G\rightrightarrows M$ with the multiplication $m:G^{(2)}\to G$ and the inverse map $\inv_G:G\to G$. The manifold $G^r=\sT^rG$ carries a canonical structure of a Lie groupoid which we will call the \emph{$r$-lift} of $G\rightrightarrows M$. We prefer the term `lift', since a \emph{prolongation of $G$} defined in the literature is another object.
For the lifted Lie groupoid $G^r$ the submanifold of units is $M^r=\sT^rM$, and the target and source maps are
$$\za^r=\sT^r\za\,,\,\zb^r=\sT^r\zb:\sT^rG\to\sT^rM\,.$$
Hence, $(\sT^rG)^{(2)}=\sT^r(G^{(2)})$ and the partial multiplication is
$\sT^rm:\sT^r(G^{(2)})\to\sT^rG$. Finally, the inverse map is $\inv^r_G=\inv_{G^r}=\sT^r(\inv_G)$. Consequently,
$$\Lie(G^r)=E(\sT^rM,\sT^rG)=\left(\sT\sT^rG\,\big|_{\sT^rM}\right)/\sT\sT^rM\simeq\left(\sT^r\sT G\,\big|_{\sT^rM}\right)/\sT^r\sT M=\sT^r\Lie(G)\,,$$
as a vector bundle over $M^r$.
Denoting the canonical projections $\sT^rG\to G$ and $\sT^rM\to M$ with $\zt^r_G:G^r\to G$ and $\zt^r_M:M^r\to M$, we get the commutative diagram
$$\xymatrix@C=30pt@R=40pt@=40pt{
G^r\ar[rr]^{\zt^r_G} \ar@<-.5ex>[d]_{\za^r}\ar@<.5ex>[d]^{\zb^r} && G\ar@<-.5ex>[d]_{\za}\ar@<.5ex>[d]^{\zb}  \\
M^r\ar[rr]^{\zt^r_M} && M \,.}
$$
If $g^r,h^r\in G^r$, then the left (resp., right) translations $l_{g^r}$ and $r_{g^r}$ act as
$$ l_{g^r}:\cF^{\za^r}(\zb^r(g^r))\to \cF^{\za^r}(\zb^r(g^r))\,, \quad r_{h^r}:\cF^{\zb^r}(\za^r(h^r))\to\cF^{\zb^r}(\zb^r(h^r))\,.
$$
Using the natural flow equivalence of functors, $\zk^r:\sT^r\circ\sT\to\sT\circ\sT^r$, we get the commutative diagram
$$\xymatrix@C=30pt@R=40pt@=40pt
{\sT^r\sT G\ar[rr]^{\zk^r_G} \ar@<-.5ex>[d]_{\sT^r\sT\za}\ar@<.5ex>[d]^{\sT^r\sT\zb} && \sT G^r\ar@<-.5ex>[d]_{\sT\za^r}\ar@<.5ex>[d]^{\sT\zb^r}  \\
\sT^r\sT M\ar[rr]^{\zk^r_M} && \sT M^r \,,}
$$
which means that $\sT\za^r\circ\zk^r_G=\zk^r_M\circ \sT^r\sT\za$ (and similarly for $\zb$). Therefore,
$$\ker(\sT\za^r)\,\big|_{M^r}=\zk^r_G\left(\sT^r\left(\ker(\sT\za)\,\big|_M\right)\right)\,.$$

\medskip\noindent It is obvious that a vector field $\cX$ on $G$ is tangent to $\za$-fibers, i.e. $\sT\za(\cX)=0$, if and only if $\cX(f^R)=0$ for all $f\in C^\infty(M)$, where $f^R=f\circ\za$. This is equivalent to the fact that the vector field $\widetilde\cX$ on $G\ti G$,
$$\widetilde\cX :G\ti G\to \sT(G\ti G)=\sT G\ti\sT G\,, \ \widetilde\cX(g,h)=(0_g,\cX_h)\,,$$
is tangent to the submanifold $G^{(2)}$, i.e., for $(g,h)\in G^{(2)}$ we have
$$\sT\za(\cX_h)=\sT\zb(0_g)=0_{\zb(g)}\,.$$
It is easy to see now that such a vector field $\cX$ is left-invariant if and only if the vector fields $\widetilde\cX$ and $\cX$ are $\sT m$-related, i.e.
$\sT m(\widetilde\cX(g,h))=\cX_{gh}$.
In other words,
$$\sT m\circ\widetilde\cX=\cX\circ m\,.$$
\begin{theorem}
If $X^L$ (resp., $X^R$) is the left-invariant (resp., the right-invariant) vector field on $G$ associated with a section $X:M\to\Lie(G)$, then $(X^L)^{(\zl)}$ (resp., $(X^R)^{(\zl)}$) is a left-invariant (resp., a right-invariant) vector field on $G^r$ associated with the lifted section (cf. (\ref{ls}))
$$X^{(\zl)}:\sT^rM\to\Lie(G^r)\,,\quad \zl=0,1,\dots,r\,.$$
\end{theorem}
\begin{proof}
Let $\cX$ be a left-invariant vector field on $G$. We will show that the lifts $\cX^{(\zl)}$ are left-invariant vector fields on $G^r$.  Note first that for any function $f$ on $M$ we have $(f^R)^{(\zn)}=(f^{(\zn)})^R$. Indeed (cf. (\ref{flambda})),
$$(f^R)^{(\zn)}=(f\circ\za)^{(\zn)}=\zq^{(\zn)}\circ\sT^r f\circ\sT^r\za=f^{(\zn)}\circ\sT^r\za=(f^{(\zn)})^R\,.$$
As the differentials of all lifts $f^{(\zn)}$ of functions on $M$ generate the cotangent bundle, it follows that a vector field on $G^r$ is tangent to $\za^r$-fibers if and only if it kills all functions $(f^R)^{(\zn)}$. But (cf. (\ref{vf}))
$$\cX^{(\zl)}((f^R)^{(\zn)})=(\cX(f^R))^{(\zl+\zn-r)}=0\,,$$
so the lifts of vector fields tangent to $\za$-fibers are tangent to $\za^r$-fibers. Observe also that
$$\wt{\cX^{(\zl)}}=(0,\cX^{(\zl)})=(0,\cX)^{(\zl)}=\wt{\cX}^{(\zl)}\,.$$
Suppose now that $\cX$ is left-invariant on $G$, so $\sT m\circ\wt{\cX}=\cX\circ m$. We know already that $\cX^{(\zl)}$ is tangent to $\za^r$-fibers, so it remains to show that
$$\sT\sT^rm\circ\wt{\cX}^{(\zl)}=\cX^{(\zl)}\circ\sT^rm\,.$$
Since (cf. (\ref{nlifts}))
$$\wt{\cX^{(\zl)}}=\wt{\cX}^{(\zl)}=\zk^r_{G^{(2)}}\circ\zq^{(\zl)}_{\sT G^{(2)}}\circ\sT^r\wt{\cX}$$
and $\sT\sT^rm\circ\zk^r_{G^{(2)}}=\zk^r_G\circ\sT^r\sT m$, we have (cf. (\ref{vbmchi}))
\beas &\sT\sT^rm\circ\wt{\cX^{(\zl)}}=\zk^r_G\circ\sT^r\sT m\circ\zq^{(\zl)}_{\sT G^{(2)}}\circ\sT^r\wt{\cX}
=\zk^r_G\circ\zq^{(\zl)}_{\sT G}\circ\sT^r\sT m\circ\sT^r\wt{\cX}\\
&=\zk^r_G\circ\zq^{(\zl)}_{\sT G}\circ\sT^r(\sT m\circ\wt{\cX})=
\zk^r_G\circ\zq^{(\zl)}_{\sT G}\circ\sT^r(\cX\circ m)=\zk^r_G\circ\zq^{(\zl)}_{\sT G}\circ\sT^r\cX\circ\sT^r m\\
&=\cX^{(\zl)}\circ\sT^rm\,,
\eeas
so $\cX^{(\zl)}$ is left-invariant.

What remains is the proof that, for any $X$ being a section of $E=\Lie(G)$, the left-invariant vector field $(X^L)^{(\zl)}$ equals $(X^{(\zl)})^L$, where $X^{(\zl)}$ is the lifted section.
Denote for simplicity $E=\Lie(G)$. We have a canonical embedding of vector bundles
$I_E:E\to\ker(\sT\za)\,\big|_{M}\subset \sT G$, so that sections $X:M\to E$ can be identified with $I_E\circ X$, which are vector fields on $G$ along the submanifold $M$. In other words, for any $x\in M$ the vector $X^L(x)$ coincides with $I_E(X_x)$. For the groupoid $G^r$ the Lie algebroid is $\sT^rE$, and the corresponding embedding $\sT^rE\to\sT\sT^rG$ reads $I_{\sT^r E}=\zk^r_G\circ\sT^rI_E$. Let us check how $(X^L)^{(\zl)}$ looks like at points of the submanifold $M^r$ of $G^r$. For, take a curve $\zg_M$ in $M$. We have
\beas &(X^L)^{(\zl)}([\zg_M]_r)=\left(\zk^r_G\circ\zq^{(\zl)}_{\sT G}\circ\sT^r(X^L)\right)([\zg_M)]_r)
=\left(\zk^r_G\circ\zq^{(\zl)}_{\sT G}\circ\sT^rI_E\circ\sT^rX)\right)([\zg_M]_r)\\
&=\left(\zk^r_G\circ\sT^rI_E\circ\zq^{(\zl)}_{E}\circ X)\right)([\zg_M]_r)=I_{\sT^rE}\circ X^{(\zl)}([\zg_M])\,,
\eeas
that finishes the proof.
\end{proof}
The above theorem immediately determines the Lie algebroid structure on $\Lie(G^r)=\sT^r\Lie(G)$. Let $X,Y$ be sections of $\Lie(G)$. By definition, the Lie bracket of sections $[X^{(\zl)},Y^{(\zm)}]$ is determined \emph{via} (we denote the Lie algebroid brackets and the brackets of vector fields with the same symbol, hoping that what bracket is in the play will be clear from the context)
$$ [X^{(\zl)},Y^{(\zm)}]^L=[\left(X^{(\zl)}\right)^L,\left(Y^{(\zm)}\right)^L]
=[(X^L)^{(\zl)},(Y^L)^{(\zm)}]=[X^L,Y^L]^{(\zl+\zm-r)}=\left([X,Y]^{(\zl+\zm-r)}\right)^L,$$
so that
\be\label{Lb}
[X^{(\zl)},Y^{(\zm)}]=[X,Y]^{(\zl+\zm-r)}\,.
\ee
Note that we have used (\ref{lb}) to write the last equality. The anchor map $\zr^r:\Lie(G^r)\to\sT M^r$ is defined on sections by
\beas & \zr^r(X^{(\zl)})([\zg_M]_r)=\sT\zb^r\left(\left(X^{(\zl)}\right)^L([\zg_M]_r)\right)=
\sT\zb^r\left((X^L)^{(\zl)}([\zg_M]_r)\right)\\
&=\left(\sT\sT^r\zb\circ\zk^r\circ\zq^{(\zl)}_{\sT G}\circ \sT^r(X^L)\right)([\zg_M]_r)
=\left(\zk^r\circ\sT^r\sT\zb\circ\zq^{(\zl)}_{\sT G}\circ \sT^r(X^L)\right)([\zg_M]_r)\\
&=\left(\zk^r\circ\zq^{(\zl)}_{\sT M}\circ\sT^r\sT\zb\circ \sT^r(X^L)\right)([\zg_M]_r)
=\left(\zk^r\circ\zq^{(\zl)}_{\sT M}\right)\left([\sT\zb\circ X^L\circ\zg_M]_r\right)\\
&=\left(\zk^r\circ\zq^{(\zl)}_{\sT M}\circ\zr(X)\right)([\zg_M]_r)=(\zr(X))^{(\zl)}([\zg_M]_r)\,,
\eeas
so that
\be\label{anch}
\zr^r(X^{(\zl)})=(\zr(X))^{(\zl)}\,.
\ee
Of course, we can take (\ref{Lb}) and (\ref{anch}) as the identities defining the $r$-lifted Lie algebroid structure on $\sT^r\zt:\sT^rE\to\sT^rM$, for an arbitrary Lie algebroid $\zt:E\to M$, not referring to any Lie groupoid (cf. \cite{Wamba2012}).
\begin{remark}
Another way of defining the Lie algebroid structure on $\sT^rE$ is by lifting the linear Poisson tensor $\zL$ on $E^*$ defining the Lie bracket and the anchor (cf. (\ref{vP})). Namely,
using the canonical identification $(\sT^rE)^*\simeq\sT^rE^*$, we get the linear Poisson bracket determining the Lie algebroid structure on $\sT^rE$ as $\zL^c$. The lifts $(\zi_X)^{(\zl)}$ of linear functions $\zi_X$ on $E^*$ correspond to the lifts $X^{(\zl)}$ of sections (cf. (\ref{iXl})), so that
$$\zi_{[X^{(\zl)},Y^{(\zm)}]}=\{(\zi_X)^{(\zl)},(\zi_Y)^{(\zm)}\}_{\zL^c}\,.$$
\end{remark}
\subsection{Lifting statistical Lie algebroids}
The lifts of tensor fields associated with a Lie algebroid $\zt:E\to M$ refer only to the lifted vector bundle structure, as described in Section \ref{lvb}. Therefore, if $g\in\Sec(E^*\ot E^*)$ is an $E$-metric and $T\in\Sec((E^*)^{\ot 3})$ is a skewness tensor on $E$, then the \emph{lifted statistical structure} on $\sT^rE$ is $(g^c,T^c)$. Mimicking the standard proof, we can see that the identity
$$g\left(\nabla^a_XY,Z \right)=g\left(\nabla^g_XY,Z \right)-\frac{a}{2}T(X,Y,Z)$$
defines a one-parameter family of torsionless $E$-connections $\n^a$ out of $(g,T)$ such that $(\n^a)^*=\n^{-a}$. The connection $\n^1$ we will denote simply $\n$ and call \emph{induced from the statistical structure} $(g^c,T^c)$. This justifies the following definition.
\begin{definition}
Let $(E,g,T)$ be a statistical Lie algebroid. Then, $(\sT^rE,g^c,T^c)$ is a statistical structure on the Lie algebroid $\sT^rE$, called the \emph{lifted Lie statistical structure} (or, more precisely, the \emph{$r$-lift} of the statistical structure $(E,g,T)$).
\end{definition}
On the other hand, if $\n$ is an $E$-connection, then by analogy to the standard case (cf. (\ref{connection})) one can prove that there exist a unique $\sT^rE$-connection $\n^c$ (called the \emph{complete lift} of $\n$) such that
\be\label{lc}\n^c_{X^{(\zl)}}Y^{(\zm)}=(\n_XY)^{(\zl+\zm-r)}\,.\ee
Moreover, if $\n$ is induced by the statistical structure $(E,g,T)$, then $\n^c$ is induced by the statistical structure $(\sT^rE,g^c,T^c)$. In this case only the anchored vector bundle structures $(E,\zr)$ and $(\sT^rE,\zr^r)$ are involved. The full Lie algebroid structures are needed for working with contrast functions, which is a more delicate question.

Of course, making use of the local formula (\ref{contrast1}), by mimicking the proof of Theorem \ref{thcontrast} we can prove that the complete lift $F^c$ of a contrast function $F:G\to\R$ on a Lie groupoid $G$ is a contrast function on the Lie groupoid $G^r$, and the following analog of Theorem \ref{main}.
\begin{theorem}
If $F:G\to\R$ is a contrast function on a Lie groupoid $G\rightrightarrows M$, then $$F^c:G^r=\sT^rG\to\R$$ is a contrast function on $G^r\rightrightarrows M^r$. Moreover, the metric $g^{F^c}$, the connection $\nabla^{F^c}$, its dual $\left(\nabla^{F^c}\right)^*$, and the skewness tensor $T^{F^c}$ induced by $F^c$ are the $r$-lifts of $g^F$, $\nabla^F$, $\left(\nabla^F\right)^*$, and $T^F$, respectively.
\end{theorem}
\begin{example}
Lut us put $\cG=\GL(n,\R)$ and let
$\cG\ti M\ni(\zg,x)\mapsto \zg x\in M$ be an action of the Lie group $\cG$ on a manifold $M$. For a given $\zg\in\cG$ the diffeomorphism $M\ni x\mapsto\zg x\in M$ we will denote $\zp(\zg)$. The corresponding \emph{action Lie groupoid} $G=\cG\ti M\rightrightarrows M$, with the submanifold of units $M\simeq \{ I\}\ti M$, is equipped with the target and source maps $\za(\zg,x)=\zg x$ and $\zb(\zg,x)=x$, the multiplication
$$(\zg,x)\bullet(\zg',x')=(\zg\zg',x')\,,$$
provided $\zg'x'=x$, and the inverse $\inv_G(\zg,x)=(\zg^{-1},\zg x)$.
Here,  $I\in\cG=\GL(n,\R)$ is the identity matrix.

\medskip\noindent
The corresponding \emph{action Lie algebroid} is defined on the vector bundle
$$E=\Lie(G)=\frak{g}\ti M\to M\,,$$
where $\frak{g}=\gl(n,\R)$ is the Lie algebra of $\cG$. Since this is a trivial vector bundle, we can view sections $\cX,\cY$ of $E$ as maps $\cX,\cY:M\to\g$, and understand the operations
$\cX\cY$, $\cX^t$ and $e^\cX$ on sections as the matrix multiplication, the matrix transpose, and the matrix exponential done fiberwise. In particular, we can identify $X\in\g$ with the constant map $X_E:M\to\g$, $X_E(x)=X$, and further with the constant section $\cX=X_E$ of $E$,
$$X_E:M\to E=\g\ti M\,,\quad X_E(x)=(X,x)\in\g\ti M\,.$$
It is obvious that such sections generate the vector bundle $E\to M$.

The anchor $\zr:E\to\sT M$ and the Lie bracket $[\cdot,\cdot]_E$ of sections of $E$ are uniquely determined by the conditions
$$\zr(X_E)=\hat X\,,\quad [X_E,Y_E]_E=\left([X,Y]\right)_E \,,$$
where $\hat X$ is the fundamental vector field of the $\cG$-action on $M$, associated with $X\in\frak{g}$. In other words, $\hat X$ is the generator of the one-parameter group of diffeomorphisms $t\mapsto \zp(\exp(-tX))$, so that we have
\be\label{fa}
\widehat{[X,Y]}=[\hat X,\hat Y]\,,
\ee
where, clearly, the first bracket is the Lie bracket on $\g$, and the second is the bracket of vector fields on $M$.
In particular, for $X,Y\in\g$,
$$\zr\left(fX_E\right)=f\hat X\,,\ \text{and}\quad [fX_E,gY_E]_E=fg[X,Y]_E+f\hat X(g)Y_E-g\,\hat Y(f)X_E\,.$$
\begin{remark}
Our definition of the fundamental vector field $\hat X$ uses $(-t)$ as the parameter, in order to have the Lie algebra homomorphism (\ref{fa}). With the one-parameter group of diffeomorphism $\zp(\exp(tX))$ we would have an anti-homomorphism. For example, the left-regular action of a Lie group on itself would produce right-invariant vector fields as fundamental vector fields. This is a mistake which is often found in the literature. This is probably because in applications to principal bundles one uses the right action of the group, which is in fact an anti-action.
\end{remark}
Since in our case $\cG=\GL(n,\R)$ is an open subset in the vector space $\g=\gl(n,\R)$, we can consider $\sT\cG$ as an open subset in $\g\ti\g$ consisting of
pairs $(\zg,X)\in\g\ti\g$ such that $\zg$ is invertible. In other words, we have the identification
$\sT\cG\simeq\cG\ti\g$, where $(\zg,X)$ represents the first jet of the curve $t\mapsto\zg+tX$ (with values in $\cG$ for small $t$). This vector $(\zg,X)$ corresponds to the vector $(\zg,\zg^{-1}X)$ (resp., $(\zg,X\zg^{-1})$) in the left (resp., right) trivialization of $\sT\cG$.
The latter trivializations are valid for an arbitrary Lie group, while in our case it is important that $\cG=\GL(n,\R)$ is canonically an open subset in a vector space. On the other hand, our trivialization is easier to work with; in particular, the left translation of $X\in\g$ by $\sT(l_\zg)$ reads simply $\zg X$, where the product is the matrix multiplication.
The vector fields on $\cG$ are viewed as maps $\cX:\cG\to\g$, canonically identified with the sections $\cG\ni\zg\mapsto (\zg,\cX(\zg))\in\sT\cG$.
The left- and right-invariant vector fields on $\cG$ corresponding to $X\in\g$ are in this picture
$$X^L(\zg)=\zg\, X\quad\text{and}\quad X^R(\zg)=X\zg\,.$$
What are the invariant vector fields on the Lie groupoid $G=\cG\ti M$ corresponding to $X_E$? It is a matter of direct calculations to get
\be\label{ivf} (X_E)^L=(X^L,\hat X)\quad\text{and}\quad(X_E)^R=(X^R,0)\,,\ee
where $(X^L,\hat X)$ and $(X^R,0)$ have clear meaning as vector fields on $G=\cG\ti M$.

Consider now the function
\be\label{cf}F_0:\cG\to\R\,,\quad F_0(\zg)=\tr\big[(I-{\zg})(I-{\zg}^t)\big]\,,\ee
extended trivially to the function $F$ on the Lie groupoid $G$,
$$F(\zg,x)=F_0(\zg)=\tr\big[(I-{\zg})(I-{\zg}^t)\big]\,,$$
which clearly vanishes on $M\simeq\{ I\}\ti M$.  Here, `$t$' denotes the matrix transposition.
Since $F$ does not depend on points of $M$, (\ref{ivf}) implies that taking derivatives of $F$ with respect to the invariant vector fields $(X_E)^L$, $(X_E)^R$, etc., and then restricting them to $M\simeq\{ I\}\ti M$, will produce constant functions on $M$ with values being the derivatives of $F_0$ with respect to $X^L$, $X^R$, etc., evaluated at $I$.
In particular, for $X,Y\in\g$, we have
$$(X_E)^L(Y_E)^RF\,\big|_M=(X^LY^RF_0)(I)\,,$$
so that the symmetric two-tensor (metric) associated with $F$ reads
\beas & g^F(X_E,Y_E)=(X_E)^L(Y_E)^RF\,\big|_M=(X^LY^RF_0)(I)\\
&\displaystyle =\frac{\pa^2}{\pa u\,\pa s}\,\bigg|_{u,s=0}\!\tr\big[(I-e^{uY}\,e^{sX})(I-e^{sX^t}e^{uY^t})\big]\,.
\eeas
By straightforward calculations we get
\be\label{metr}g^F(X_E,Y_E)=2\,\tr(X\,Y^t)\,.\ee
This is a positive definite metric on $E=\gl(n,\R)\ti M$, so that $F$ is a Lie contrast function.
Similarly, for the torsionless $E$-connection $\n^F$ we have
$$g^F(\nabla^F_{X_E}\,Y_E,Z_E)=-(X_E)^R(Y_E)^R(Z_E)^L\,F\,\big|_M\,,$$
which equals
$$-X^RY^RZ^L\,F_0(I)=-\frac{\pa^3}{\pa v\,\pa u\,\pa s}\,\bigg|_{v,u,s=0}\,F_0\left(e^{sZ}\,e^{vX}\,e^{uY}\right)\,.$$
Tedious but easy calculations lead to
$$g^F(\nabla^F_{X_E}\,Y_E,Z_E)=-2\tr\big[\left(X^tY+Y^tX+YX\right)Z^t\big]\,.$$
To get it, we used the well-known properties of the trace: $\tr(A)=\tr(A^t)$ and $\tr(AB)=\tr(BA)$.
From (\ref{metr}) we get now
\be\label{Econ}\n^F_{X_E}Y_E=-\left(X^tY+Y^tX+YX\right)_E\,.\ee
The connection is indeed torsionless:
$$\n^F_{X_E}Y_E-\n^F_{Y_E}X_E=-(YX-XY)_E=[X_E,Y_E]_E\,.$$
We have analogously
$$\n^{F^*}_XY=-\left(XY^t+YX^t+XY\right)_E\,,$$
so that the Levi-Civita connection for $g^F$,
$$\n^{g^F}=\frac{1}{2}\left(\n^F+\n^{F^*}\right)\,,$$
reads
$$\n^{g^F}_{X_E}Y_E=
\frac{1}{2}\left([X^t,Y]+[Y^t,X]+[X,Y]\right)_E\,.$$
Similarly, we get the corresponding skewness tensor,
$$T(X_E,Y_E,Z_E)=(X_E)^R(Y_E)^L(Z_E)^L(F^*-F)\,\big|_M
=(X^R\,Y^L\,Z^L)(F_0^*-F_0)(I)\,.$$
In the explicit form,
\be\label{T1}T(X_E,Y_E,Z_E)=-2\,\tr\big[(XY+YX)Z^t+(XZ+ZX)Y^t+(ZY+YZ)X^t\big]\,.\ee

\medskip\noindent
In order to lift the above Lie statistical structure and the corresponding contrast function to the tangent bundles, let us recall first that for a Lie group $\cG$ and its Lie algebra $\g$ the multiplication in $\sT \cG$ is (see e.g. \cite[10.17]{Kolar1996} or \cite[Chapter V]{Yano1973})
$$X_g\bullet Y_h=\sT_h(l_g)(Y_h)+\sT_g(r_h)(X_g)\,.$$
In particular, the unit in $\sT\cG$ is $0_e$, where $e$ is the unit in $\cG$.
For our trivialization $\sT\cG=\cG\ti\g$ in the case $\cG=\GL(n,r)$, the multiplication in $\sT\cG$ reads
$$(\zg,X)\bullet(\zg',X')=(\zg\zg',\zg X'+X\zg')\,.$$
The Lie bracket for the tangent Lie algebra $\sT\g\simeq\g\op\g_0$ ($\g_0$ is another copy of $\g$) is
$$[(X,X_0),(Y,Y_0)]_{\sT\g}=\left([X,Y],[X,Y_0]+[X_0,Y]\right)\,.$$
This gives also the Lie bracket for the Lie algebroid $\sT E$ of constant sections $(X,X_0)_{\sT E}$ and $(Y,Y_0)_{\sT E}$.
Since the considered vector bundles are trivial, it is easy to see that the morphisms $\zq_E^{(\zl)}:\sT E\to\sT E$, $\zl=0,1$, reduce to the morphisms $\zq^{(\zl)}_{\sT\g}:\sT\g\to\sT\g$ in fibers,
$$\zq^{(1)}_{\sT\g}=\id_{\sT\g}\quad\text{and}\quad \zq^{(0)}_{\sT\g}(X,X_0)=(0,X)\,.$$
Hence,
\be\label{lf}(X_E)^c:=(X_E)^{(1)}=(X,0)_{\sT E}\quad\text{and}\quad (X_E)^v:=(X_E)^{(0)}=(0,X)_{\sT E}\,.\ee
Moreover, the anchor for the Lie algebroid $\sT E=\sT\g\ti\sT M$ reads
$$\zr^1\left((X,Y)_{\sT E}\right)=(\hat X)^c+(\hat Y)^v\,.$$
In view of (\ref{cf}), (\ref{metr}), and (\ref{T1}), after calculating the `time derivative' (we can also make use of (\ref{contraction1}), (\ref{metr}), (\ref{T1}), and (\ref{lf})), we easily obtain the forms of the complete (tangent) lifts of $F$, $g^F$, and $T^F$:
\beas & F^c(\zg,X,x,\dot x)=\tr\big[\zg X^t+X\zg^t-(X+X^t)\big]\,,\\
&\left(g^F\right)^c\big((X,X_0)_{\sT E},(Y,Y_0)_{\sT E}\big)=2\tr\big[XY_0^t+X_0^tY\big]\,,
\eeas
and
\beas
&&\left(T^F\right)^c\big((X,X_0)_{\sT E},(Y,Y_0)_{\sT E},(Z,Z_0)_{\sT E}\big)\\
&=&-2\tr\bigg[\big[(X_0Y+XY_0)Z^t+XYZ_0^t\big]\,+\,\text(perm.)\bigg]\,,
\eeas
where `(perm.)' denotes all permutations of symbols $(X,Y,Z)$.
To calculate $\left(\n^F\right)^c$,
we use (\ref{Econ}) and (\ref{lc}) to get
\beas & \left(\n^F\right)^c_{(X,X_0)_{\sT E}}(Y,Y_0)_{\sT E}\\
&=-\bigg(\big(X^tY+Y^tX+YX\big)\,,
\big(X_0^tY+Y^tX_0+YX_0+X^tY_0+Y_0^tX+Y_0X\big)\bigg)_{\sT E}\,.
\eeas
Now, it is easy to check that this connection is torsionless,
\beas &\left(\n^F\right)^c_{(X,X_0)_{\sT E}}(Y,Y_0)_{\sT E}-\left(\n^F\right)^c_{(Y,Y_0)_{\sT E}}(X,X_0)_{\sT E}
&=\big([X,Y]\,,[X_0,Y]+[X,Y_0]\big)_{\sT E}\\
&=\big[(X,X_0)_{\sT E}\,,(Y,Y_0)_{\sT E}\big]_{\sT E}\,.
\eeas
\end{example}

\section{Conclusions and outlook}
We have shown that the introduced lifting procedures provide
new examples of statistical manifolds, and that the lifted metrics, skewness tensors, and the connections may be derived from the lifted contrast functions. Our
construction is functorial and shows that to achieve this result we have
to admit metric tensors which are not positive definite. Although
the meaning of `null-varieties' of probability distributions at the
moment is unclear to us, we hope to come back to this aspect very soon.

As one possible application of our construction in the quantum setting,
we mention the evolution of quantum states for open systems.
It is well known that when we consider a quantum system coupled to an
environment, the  evolution of the coupled system defines projected
trajectories on the space of states of the subsystem, which in general
are not trajectories of a vector field on the space of the quantum states of
the subsystem. Indeed, this happens only under particular assumptions of a
weak coupling, and when it happens the trajectories are solutions of a
vector field which generates a semi-group of completely positive maps (the
infinitesimal generator being a GKLS-vector field).

When the family of trajectories is not associated with a vector field, one
speaks of non-Markovian evolution. We believe that by considering
differential equations of higher order it may be possible to describe
families of projected trajectories as solutions of first order vector
fields on the higher tangent bundles we have introduced.
A detailed discussion of the construction of a vector field
on $\sT M$ out of trajectories on $M$ is to be found in \cite{Marmo1985}.
More likely, this approach will not be able to give all systems with memory, but we may be
able to capture a relevant part of them. These aspects will appear
elsewhere.

One of important discoveries is also the fact that the above lifting procedures can be naturally extended to the categories of Lie algebroids and Lie groupoids. In this setting, we replace contrast functions defined on the manifold $M\ti M$ (which is actually a Lie groupoid) with contrast functions on an arbitrary Lie groupoid. To obtain the Lie algebroid tensors and connections, we differentiate the contrast function by left- and right-invariant vector fields on the Lie groupoid.
The lifts in this case lead to Lie algebroid and Lie groupoid structures on the corresponding higher tangent bundles we have studied in detail.

\section*{Acknowledgments}
G.~Marmo is a member of the Gruppo Nazionale di Fisica Matematica (INDAM), Italy. He acknowledges financial support from the Spanish Ministry of Economy and Competitiveness, through the Severo Ochoa Program for Centers of Excellence in RD (SEV-2015/0554). M.~Ku\'s acknowledges support of the Polish National Science Center \emph{via} the OPUS grant 2017/27/B/ST2/02959.  The authors thank E.~Peyghan for turning their attention to paper \cite{Matsuzoe2003}.

\section*{Conflict of Interest}
The authors declare that they have no conflict of interest.

\vskip.5cm
\noindent Katarzyna Grabowska\\\emph{Faculty of Physics,
University of Warsaw,}\\
{\small ul. Pasteura 5, 02-093 Warszawa, Poland} \\{\tt konieczn@fuw.edu.pl}\\
https://orcid.org/0000-0003-2805-1849\\

\noindent Janusz Grabowski\\\emph{Institute of Mathematics, Polish Academy of Sciences}\\{\small ul. \'Sniadeckich 8, 00-656 Warszawa,
Poland}\\{\tt jagrab@impan.pl}\\  https://orcid.org/0000-0001-8715-2370
\\

\noindent Marek Ku\'s\\
\emph{Center for Theoretical Physics, Polish Academy of Sciences,} \\
{\small Aleja Lotnik{\'o}w 32/46, 02-668 Warszawa,
Poland} \\{\tt marek.kus@cft.edu.pl}\\
https://orcid.org/0000-0002-2767-3251
\\

\noindent Giuseppe Marmo\\
\emph{Dipartimento di Fisica ``Ettore Pancini'', Universit\`{a} ``Federico II'' di Napoli} \\
\emph{and Istituto Nazionale di Fisica Nucleare, Sezione di Napoli,} \\
{\small Complesso Universitario di Monte Sant Angelo,} \\
{\small Via Cintia, I-80126 Napoli, Italy} \\
{\tt marmo@na.infn.it}\\
https://orcid.org/0000-0003-2662-2193
\\
\end{document}